\newtheorem{lma}{Lemma}
\newtheorem*{pps}{Proposition}
\newtheorem*{thm}{Theorem}
\newtheorem{crl}{Corollary}
\theoremstyle{remark}
\newtheorem{rmk}{Remark}
\title{Decomposition Algorithm for Median Graph of Triangulation of a Bordered 2D Surface}
\author{Weiwen Gu}
\date{}
\begin{document}
\pagenumbering{roman}
\tableofcontents
\newpage
\pagenumbering{arabic}
\setcounter{page}{1}
\maketitle
\begin{abstract}
This paper develops an algorithm that identifies and decomposes a
median graph of a triangulation of a 2-dimensional (2D) oriented
bordered surface and in addition restores all corresponding
triangulation whenever they exist. The algorithm is based on the
consecutive simplification of the given graph by reducing degrees of
its nodes. From the paper \cite{FST1}, it is known that such graphs
can not have nodes of degrees above 8. Neighborhood of nodes of degrees 8,7,6,5, and
4 are consecutively simplified. Then, a criterion is provided to
identify median graphs with nodes of degrees at most 3. As a
byproduct, we produce an algorithm that is more effective than
previous known to determine quivers of finite mutation type of size
greater than 10.
\end{abstract}
\section[Introduction]{Introduction}
Triangulations of 2D surface are instrumental for in many math theories. To mention a few, they are
used in \cite{NI} to construct coordinates on Teichm$\ddot{u}$ller Space. Also, in
\cite{FST1}, the authors uses triangulations of a 2D surface to construct a cluster algebra.
Moreover, they described a principal way to determine if a cluster algebra is originated from
a surface triangulation. The method uses the idea of block decomposition of the median graph of triangulation.\\\\
Block decomposition also plays an important role in
determining the mutation class of a \textit{quiver}. A
\textit{quiver} is defined as a finite oriented multi-graph without
loops and 2-cycles. Based on the cluster algebra constructed in
\cite{FST1}, a \textit{seed} ($f,B$) is defined, where $f$ is a
collection of $n$ algebraically independent rational functions of
$n$ variables and $B$ is a skew-symmetrizable matrix. Cluster
algebra formalism introduces a certain operation on seeds. This
operation is called \textit{mutation}, see \cite{FST2} Definition
2.1. Two seeds are said to be \textit{mutation equivalent} if one is
obtained from the other by a sequence of seed mutation. A
\textit{mutation class} is the collection of mutation-equivalent
seeds. In \cite{FST1}, the authors prove that the mutation class of
any \textit{block-decomposable} quiver is finite.\\\\
In this paper, we provide a combinatorial algorithm
that determines if a given graph is decomposable.
Moreover, the algorithm also determines
all corresponding triangulations for decomposable graphs.\\\\
A block is a directed graph that is isomorphic to one of the graphs
shown in Figure.\ref{Decomposition}. They are categorized as one of
the following: type I (\textit{spike}), II (\textit{triangle}), IIIa (\textit{infork}),
IIIb (\textit{outfork}), IV (\textit{diamond}), and V (\textit{square}).
The nodes marked by unfilled circles are
called \textit{outlets} or \textit{white nodes}. The nodes marked by filled circles
are called \textit{dead ends} or \textit{black nodes}. A directed graph G is called
\textit{block decomposable} or simply \textit{decomposable} if it
can be obtained from disjoint blocks as a result of the following gluing rules: (See \cite{FST1} for definition.)
\begin{table}
\centering
\begin{tabular}{cccccc}
&&&&&\\
\includegraphics{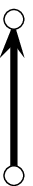}& \includegraphics{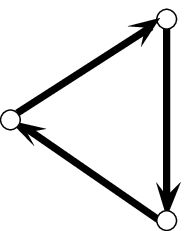}& \includegraphics{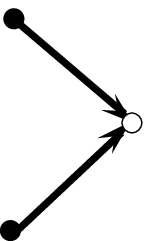} &\includegraphics{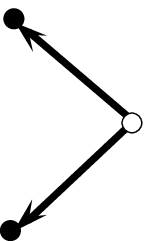} & \includegraphics{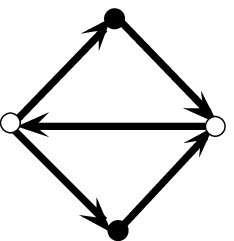}&\includegraphics{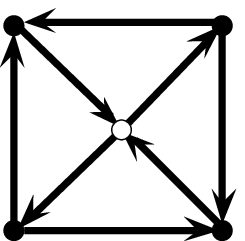}\\&&&&&\\
$\textbf{I}$:Spike&$\textbf{II}$:Triangle&$\textbf{III}$a:Infork&$\textbf{III}$b:Outfork&$\textbf{IV}$:Diamond&$\textbf{V}$:Square
\end{tabular}
\caption{Blocks}\label{Decomposition}
\end{table}
\begin{enumerate}
\item Two white nodes of two different blocks can be identified. As a result, the graph becomes a union of two parts.
The common node becomes black. A white node can not be identified with
another node of the same block. See Figure
\ref{ex1}--\ref{ex1ill}.
\item A black node can not be identified with any other node.
\item If an edge $a=x\rightarrow y$ with two white nodes ($x,y$) is glued to another
edge $b=p\rightarrow q$ with two white nodes ($p,q$) in the
following way: $x$ is glued to $p$ and $y$ is glued to $q$, then a
multi-edge is formed, and the nodes $x=p$, $y=q$ become black.
(Figure \ref{multi})
\item If an edge $a=x\rightarrow y$ with two white nodes $x,y$ is glued to another edge $b=q\rightarrow p$ in the following way: $x$ is glued to $p$ and $y$ is glued to $q$,
then both edges are removed after gluing, the nodes $x=p$, $y=q$
become black. We say that edges annihilate each other. (Figure
\ref{anni})
\end{enumerate}
\begin{figure}[tbh]
\centering
 \begin{minipage}[c]{0.4\linewidth}
 \centering
 \includegraphics[width=0.4\linewidth]{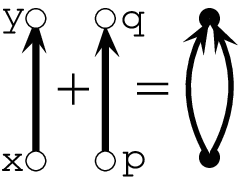}
 \caption{}\label{multi}
 \end{minipage}
 \begin{minipage}[c]{0.4\linewidth}
 \centering
 \includegraphics[width=0.4\linewidth]{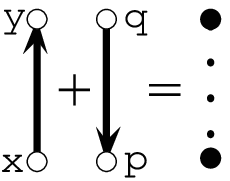}
 \caption{}\label{anni}
 \end{minipage}
\end{figure}
\noindent For example, Figure \ref{ex1} can be constructed from an
infork ($\textbf{III}$a) and a spike~($\textbf{I}$). (Figure
\ref{ex1decom}).
\begin{rmk}By design, a block-decomposable graph has
no loop and all edge multiplicities are 1 or 2.
\end{rmk}
\begin{figure}[thb]
\centering
\begin{minipage}[t]{0.3\linewidth}
\centering
\includegraphics[width=0.6\linewidth]{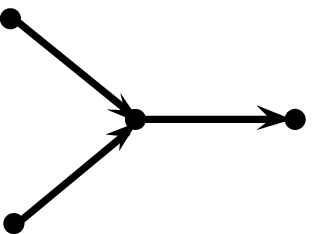}
\caption{Example 1}\label{ex1}
\end{minipage}
\begin{minipage}[t]{0.3\linewidth}
\centering
\includegraphics[width=0.6\linewidth]{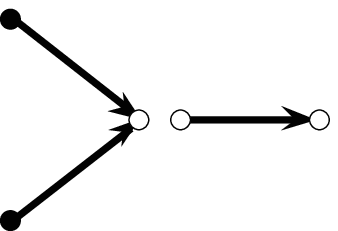}
\caption{Decomposition of Example 1}\label{ex1decom}
\end{minipage}
\begin{minipage}[t]{0.3\linewidth}
\centering
\includegraphics[width=0.6\linewidth]{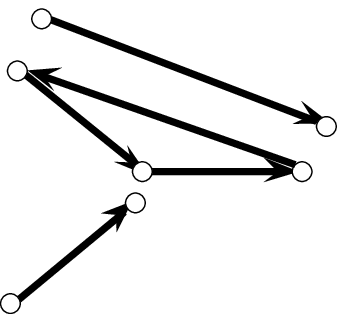}
\caption{Another way to decompose Figure.\ref{ex1}}\label{ex1ill}
\end{minipage}
\end{figure}
\begin{rmk}Note that in the algorithm, the color of a vertex is not specified in the
original graph, and is determined only for vertices of blocks of a
specified block decomposition. (There might be several ways to decompose
a graph. Hence, a vertex may have different colors in different
decompositions, see Figure \ref{ex1},\ref{ex1decom} and
\ref{ex1ill}.)
\end{rmk}
\noindent We will assume in the following discussion that $G$ is a finite
oriented multi-graph without loops and 2-cycles.
\begin{pps} A graph $G$ without isolated nodes is decomposable if and only if every disjoint connected component is decomposable.
\end{pps}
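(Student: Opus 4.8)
The plan is to prove the two implications separately, exploiting the fact that each of the gluing rules (1)--(4) acts on nodes or edges coming from at most two blocks at a time, and so never merges material that should belong to distinct connected components.

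For the ``if'' direction, suppose $G$ has connected components $C_1,\dots,C_k$ and each $C_i$ is decomposable, say $C_i$ is obtained from a family of disjoint blocks $\mathcal{B}_i$ by a prescribed sequence of gluings $\sigma_i$. I would take the disjoint union $\mathcal{B}=\mathcal{B}_1\sqcup\cdots\sqcup\mathcal{B}_k$ of all these blocks and perform all the gluings $\sigma_1,\dots,\sigma_k$ together. Since each $\sigma_i$ only identifies nodes and edges belonging to blocks of $\mathcal{B}_i$, the operations do not interfere with one another, and the result is exactly $C_1\sqcup\cdots\sqcup C_k=G$. Hence $G$ is decomposable.

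For the ``only if'' direction, suppose $G$ is decomposable, obtained from disjoint blocks $\mathcal{B}=\{B_1,\dots,B_m\}$ by gluings. I would introduce an auxiliary graph $H$ on the vertex set $\mathcal{B}$, joining two blocks whenever a gluing operation was applied between them (a node identification of rule (1), a multi-edge of rule (3), or an annihilation of rule (4)). The heart of the argument is the claim that the connected components of $H$ correspond bijectively to those of $G$. In one direction, if two blocks are $H$-adjacent then they share an identified node carrying incident edges into both (every white outlet has positive degree inside its block), so they lie in a single component of $G$. Conversely, tracing a path in $G$ shows that any two consecutive edges belong either to the same block or to two blocks sharing the traversed node; hence the blocks visited along the path form a walk in $H$. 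Consequently the blocks lying in a fixed component $C$ of $G$ constitute one $H$-component, and applying only the gluings internal to that class reconstructs $C$, exhibiting $C$ as decomposable.

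The step I expect to be the main obstacle is verifying this bijection cleanly in the presence of the annihilation rule (4), where edges are deleted after the nodes are identified. I must check that the identified nodes never become isolated---this is precisely where the hypothesis that $G$ has no isolated nodes is used---and that an annihilated edge pair cannot split a block internally so as to separate its contribution across two components. I would handle this by recording that each block is a connected graph and that the outlets involved in rules (3)--(4) retain positive degree after the deletion, so that the two affected blocks remain joined at their identified nodes. Once this bookkeeping is settled, the partition of $\mathcal{B}$ by $H$-component coincides with the partition by $G$-component, and restricting the given decomposition to each class yields the desired decompositions of the components.
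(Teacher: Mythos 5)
Your overall architecture is sound and in fact mirrors the paper's: the ``if'' direction is the trivial disjoint-union argument, and the ``only if'' direction reduces to showing that the gluing operations never split connectivity, with annihilation (rule 4) as the only threat and the no-isolated-nodes hypothesis excluding the degenerate cases. The auxiliary graph $H$ is harmless scaffolding and the bijection argument built on it would work. The genuine gap is that your one load-bearing step is asserted rather than proved: the claim that ``the outlets involved in rules (3)--(4) retain positive degree after the deletion'' is false as a blanket statement. If a spike edge annihilates a spike edge, both identified nodes end with degree zero; if two triangles annihilate entirely (all three node pairs identified, all three edge pairs removed), three isolated nodes result. These are precisely the cases your hypothesis excludes, but you still must show they are the \emph{only} failure modes, and that is a nontrivial case analysis, not bookkeeping to be ``settled'' afterwards.

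Concretely, the verification you defer is the entire content of the paper's proof. One must first observe that only spike, triangle and diamond blocks contain annihilable edges at all, since every edge of a fork or a square has a black endpoint; next, that in a diamond only the mid-edge can annihilate, leaving the boundary 4-cycle connected; and that a triangle can lose either exactly one edge or all three, never exactly two --- because any two edges of a triangle share a node, and that node can be identified with at most one node of one other block (becoming black thereafter), so two distinct annihilation partners at it are impossible. With this enumeration in hand, one checks case by case, as the paper does, that the endpoints of an annihilated edge remain joined through the surviving part of one of the two participating blocks (the third node of a triangle, the boundary of a diamond), except in the spike--spike and full triangle--triangle cases, which produce isolated nodes and are ruled out by hypothesis. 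Without this analysis, your sentence ``the two affected blocks remain joined at their identified nodes'' is a restatement of what needs proving, and your proposed justification via positive degrees fails on exactly the configurations that make the isolated-nodes hypothesis necessary.
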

\begin{proof}
It's suffice to show that annihilating
an edge in a connected graph generates a connected graph. Since we can only annihilated edges in a spike, triangle or diamond block,
and before annihilating an edge, both of its endpoint must be white. Denote these two endpoints by $x,y$.
\begin{itemize}
\item Suppose $x,y$ are endpoints of a spike. Notice that the original graph must be a single spike. If we annihilate the edge by gluing a triangle, $x,y$ will be connected via the third node of the triangle. If we annihilate the edge by gluing a diamond, $x,y$ will be connected via the remaining nodes of the diamond. If we annihilate it by gluing a spike, the new graph will consist only two nodes and no edge. Contradiction.
\item Suppose $x,y$ are endpoints of a triangle. If we annihilate the edge $\overline{xy}$ by gluing a spike or diamond, the remaining two edges of the triangle can not be annihilated, and $x,y$ will still be connected via the third node of the triangular block. If we glue another triangle, there are two cases: we can annihilate only one edge, namely $\overline{xy}$. Then $x,y$ will still be connected via the third node. We can also annihilate the whole triangle when all three nodes are white. In this case, the original graph is a single triangular block and the new graph is simply three nodes. This is again a contradiction.
\item Suppose $x,y$ are endpoints of a diamond. Since none of the boundary edges can be annihilated, after gluing a spike or triangle or another diamond to edge $\overline{xy}$, $x,y$ will still be connected.
\end{itemize}
\end{proof}
According to the above proposition, if $G$ is decomposable, we may break connectivity in a graph only in two trivial cases. In both cases, the resulting graph contains isolated nodes. On the other hand, in a decomposable graph, isolated nodes can only be obtained in the above manner. In particular, the decomposition of subset of $G$ consisting isolated nodes is independent with the rest of $G$. Therefor, we can assume from now on that the graph is connected.\\\\
Notice that the highest degree of any node in a block is 4. If
$G$ is decomposable, the highest degree of any node in $G$ does not
exceed 8.\\
In the algorithm, for every graph, the
neighborhoods of nodes of highest degree (at most 8) are simplified. The
neighborhoods of all such nodes are analyzed and replaced by simpler
ones so that the degrees of these nodes decrease. After the
neighborhoods of nodes of degree 8 are exhausted, proceed to the
nodes of degree 7, then, 6,5 and 4. This paper proves that all
replacements are \textit{reversible}: the original graph is decomposable if and only if the new graph is. At last, we give a theorem that identifies decomposable graphs containing nodes of degree at most 3.\\
\section[Simplification]{Simplification on Nodes of Degree 8,7,6 and 5}
In this section we show how to replace the neighborhood of certain node by
an equivalent one which decreases the degree of this node. As a result, the nodes of degree larger than four
are eliminated, consecutively.
\subsection{Nodes of Degree 8}
A node $o$ of degree 8 in decomposable graph $G$ can only be
obtained by gluing a square with another square. (See
Figure.\ref{deg8}) The result is a disjoint connected component.
Otherwise, $G$ is undecomposable.
\subsection{Nodes of Degree 7}
If $o$ is a node of degree 7 in a decomposable graph $G$, it must
have resulted from gluing together a diamond and a square, see
Figure.\ref{deg7}. The neighborhood is replaced by the one in Figure \ref{deg7simp}. The Lemma \ref{trianglespike} shows that this
replacement is reversible.\\
\begin{lma}
Suppose the neighborhood of node $o$ is as in Figure \ref{deg7simp}. nodes $x,p$ and nodes $y,p$ are disconnected. If $G$ is decomposable, the neighborhood can only be decomposed into a triangle and a
spike.\label{trianglespike}
\end{lma}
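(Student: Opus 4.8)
The plan is to exploit the structural principle established earlier in the paper—that in any block decomposition a vertex of $G$ lies in at most two blocks, being \emph{white} if it belongs to a single block and \emph{black} if it is the identification of two white nodes from two distinct blocks—and to combine this with the disconnection hypothesis on the pairs $x,p$ and $y,p$. First I would record the local data at $o$ from Figure~\ref{deg7simp}: the node $o$ has degree $3$, two of its incident edges reach $x$ and $y$ (which are themselves joined by an edge), while the third reaches $p$, and $p$ is adjacent to neither $x$ nor $y$. Since a triangle contributes degree $2$ and a spike contributes degree $1$ at a shared outlet, the target configuration is precisely the case in which $o$ is black, produced by one triangle on $\{o,x,y\}$ together with one spike $\{o,p\}$; the degree arithmetic $3 = 2+1$ is the only admissible split, and the goal is to show that no other pair of blocks is consistent with the recorded local data.

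Next I would split according to the color of $o$. If $o$ is \emph{white}, it lies in a single block possessing a degree-$3$ vertex; the only candidates are the diamond and any fork carrying such a vertex, and I would eliminate each by observing that in these blocks the fourth neighbor $p$ is forced to be adjacent to $x$ or $y$ (or that the required edge $xy$ cannot be realized), directly contradicting the disconnection hypothesis. If $o$ is \emph{black}, its degree splits as $2+1$ between the two glued blocks, and I would treat the two contributions separately. For the degree-$2$ contribution I would use the presence of the edge $xy$: a block meeting $o$ in a degree-$2$ vertex whose two neighbors are mutually \emph{adjacent} must be a triangle, because in a square the two neighbors of a degree-$2$ node are non-adjacent, so the edge $xy$ would be absent, and the diamond cannot reproduce this local pattern without introducing an edge forbidden by the hypothesis. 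For the degree-$1$ contribution I would use the disconnection of $p$ together with its degree in the neighborhood to force a spike, since only the spike attaches $p$ to $o$ as a genuine dead end without linking $p$ back to $x$ or $y$.

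Finally I would confirm that the edge orientations displayed in Figure~\ref{deg7simp} are compatible with a cyclically oriented triangle together with the single arrow of the spike, so that the \emph{oriented} block structure, and not merely the underlying graph, is forced. The main obstacle I anticipate is the exhaustiveness and bookkeeping of the black case: one must be certain that every block admitting a degree-$1$ or degree-$2$ vertex has been enumerated and dismissed, and in particular one must pin down the status of $p$—whether it is required to be a dead end or may serve as an outlet—so that the combination of the disconnection hypothesis with the presence of the edge $xy$ is genuinely strong enough to rule out every alternative pairing. Once this finite enumeration is complete, the uniqueness claim follows.
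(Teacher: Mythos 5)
Your frame (color of $o$, then a finite enumeration of blocks meeting $o$) is a reasonable organization, but the proposal has a genuine gap: it never engages with gluing rule~4, edge annihilation, and that is exactly where the difficulty of Lemma~\ref{trianglespike} lives. Because an edge whose two endpoints are white may be annihilated, the absence of an edge in $G$ (e.g.\ $x,p$ disconnected) does \emph{not} rule out a block containing the edge $\overline{xp}$, so your two pivotal claims --- that a block meeting $o$ in a degree-2 vertex whose two neighbors are adjacent ``must be a triangle,'' and that ``only the spike attaches $p$ without linking it back to $x$ or $y$'' --- are unproved. Concretely, the configuration your outline cannot exclude is: the edge $a=\overline{op}$ lies together with $b=\overline{ox}$ in a triangle $\triangle_1$ whose third edge $\overline{xp}$ is annihilated by an edge $e$ of another block. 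The paper's proof spends most of its effort precisely here: $e$ and $c=\overline{xy}$ are then forced into a common triangle $\triangle_2$, whose third edge $\overline{py}$ must in turn be annihilated by some $f$; then $f$ and $d=\overline{oy}$ force a third triangle $\triangle_3$ whose remaining edge would have to be $a$ itself, contradicting $a\in\triangle_1$. Nothing in your sketch produces this chain, and without it you cannot show that the $2+1$ pairing of the edges at $o$ is $\{b,d\}+\{a\}$ rather than $\{a,b\}+\{d\}$; the adjacency of $x$ and $y$ alone does not decide this, since the edge $c$ may a priori come from a different pair of blocks than the edges at $o$.

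Annihilation also breaks your degree bookkeeping. The visible degree of a black node is not the sum of its block degrees: a diamond contributes $3$ at a white node, and gluing its mid-edge against a triangle's edge (both have two white endpoints, so they annihilate) yields visible degree $3=3+2-2$, so ``$3=2+1$ is the only admissible split'' is false as stated. Excluding the diamond requires the observation, used repeatedly in the paper, that only the mid-edge of a diamond has two white endpoints, hence boundary edges can never be annihilated; your phrase ``without introducing an edge forbidden by the hypothesis'' silently assumes this. Two smaller enumeration slips point the same way: a square contributes degree $3$ at a corner, not $2$, so it is not even a candidate for your degree-2 contribution, while the fork, which genuinely contributes degree $2$ at its white node, is absent from your list and must be eliminated by orientation, as in the paper's first case. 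Your closing paragraph correctly anticipates that exhaustiveness is the obstacle, but anticipating it is not the same as resolving it: as written, the uniqueness claim remains unproven.
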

\begin{proof}
It is necessary to show that $b,c,d$ form a triangular block in the decomposition and $a$ comes from a spike block.\\
Assume there is a decomposition, I claim that the block containing
$b$
must be a triangle.\\
Suppose that the claim is false.
\begin{enumerate}
\item Suppose that $b$ comes from a fork. Since both edges in a fork
contain black endpoints, they can not be annihilated. Thus, the fork
containing $b$ must also contain $a$ or $c$. However, the directions
of $a$ and $c$ are not compatible with the directions of edges in any
fork block. Therefore, $b$ can not be a part of a fork.
\item Suppose $b$ comes from a square block. Since at least one endpoint of any
edge in a square block is black, none of the edges can be annihilated. Thus, the
degree of any corner node is 3, and the central node has degree at least 4.
Since the degree of node $o$ is 3, it can only be one of the corner node in the square.
Moreover, since nodes $x$ and $p$ are not connected, they must both corner nodes on the same diagonal.
Therefore, node $y$ must be the central node. Hence nodes $y$ and $p$ must be connected, which is a contradiction. Hence $b$ can not come from a square.
\item Suppose that $b$ comes from a diamond. If the diamond does not
contain $c$ or $d$, then it is necessary to
glue $d$ and $c$ together. Since the only white nodes are the
endpoints of mid-edge, $b$ must be the mid-edge of the diamond. Suppose the diamond does not contain $c$. The edges $a,d$ must be both contained in the diamond since the degree of node $o$ is 3. Hence nodes $x,p$ must be connected, which is a contradiction. Suppose the diamond does not contain $d$, then after gluing $d$ to node $o$, the degree of $o$ must be at least 4. This again leads
to a contradiction. So the diamond must
contain $d$ and $c$. The directions of $b,c$
suggest that both $b,c$ are in the upper or lower triangle of the
diamond. This forces $d$ to be contained in the diamond. Notice that
node $x$ is not connected with $p$, the other half of the diamond is
annihilated. This is again a contradiction. So the diamond can not
contain $c$. To conclude, $b$ is not contained in a diamond block.
\item Suppose $b$ comes from a spike, $a,d$ must come from the same block. Thus,
they form a fork. Then, $c$ can not be attached. This proves the
claim.
\end{enumerate}
Now, the only option is that $b$ comes from a triangular block
$\triangle_1$. If $a$ also comes from $\triangle_1$, the third edge
in $\triangle_1$ should be annihilated by another edge, denoted by $e$.
Moreover, both $e$ and $c$ must be obtained from the same block. Taking
into account direction of edges, this block must be a triangle
$\triangle_2$. Note the third edge $\overline{py}$ of $\triangle_2$
is annihilated by an edge $f$ incident to node $y$, so $f$ and $d$
must come from the same block. Again, considering the directions of
edges, this block must also be a triangle $\triangle_3$. Therefore,
the third edge of $\triangle_3$ must be $a$. This contradicts
to the assumption that $a$ is an edge of block
$\triangle_1$. Therefore $a$ is not contained in $\triangle_1$ and
this triangle is formed by $b,c,d$. This forces $a$ to be a spike block.
\end{proof}
\begin{rmk}\label{equiv}
After the original neighborhood is replaced by the on in Figure
\ref{deg7simp}, assume the new graph is not decomposable. This means
that if the lower triangle and spike described in Lemma
\ref{trianglespike} are removed, the rest is not decomposable.
Therefore, in the original graph, after the original neighborhood of
$o$ is removed, the graph is undecomposable. However, in this case,
the neighborhood of $o$ can only be obtained from gluing a square
and a diamond. Hence the original graph is non-decomposable. This
proves that the replacement is reversible. Moreover, all
decomposition of the original graph are in 1-1 correspondence with
decomposition of the new one.
\end{rmk}
\vspace{1em}
\begin{figure}[t]
\centering
\begin{minipage}[c]{0.3\linewidth}
\centering
\includegraphics[width=0.5\linewidth]{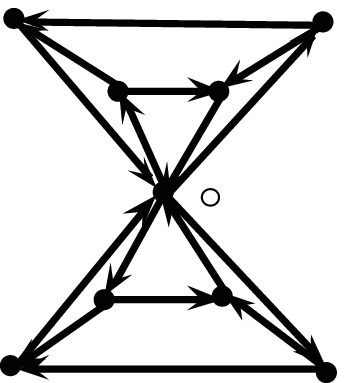}
\caption{Node of degree 8}\label{deg8}
\end{minipage}
\begin{minipage}[c]{0.3\linewidth} \centering
\includegraphics[width=0.5\linewidth]{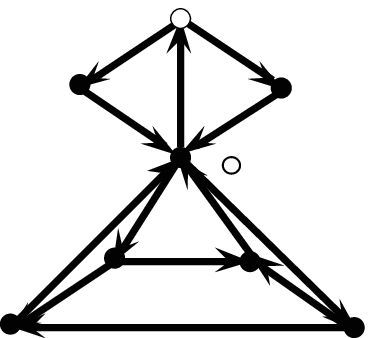}
\hspace{0.5em}
\includegraphics[width=0.5\linewidth]{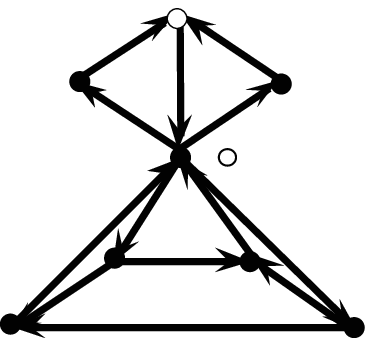}
\caption{Node of Degree 7}\label{deg7}
\end{minipage}
\begin{minipage}[c]{0.3\linewidth}
\centering \vspace{-1em}
\includegraphics[width=0.7\linewidth]{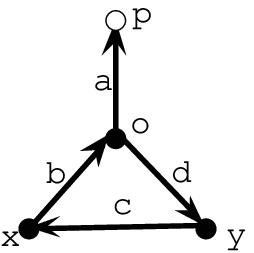}
\caption{}\label{deg7simp}
\end{minipage}
\end{figure}
\subsection{Nodes of Degree 6}
If $o$ is a node in $G$ of degree 6, there are three cases:\\
\begin{enumerate}
\item The neighborhood of $o$ comes from a triangle and a square block. (Figure \ref{deg6_1})
Then replace it by the one in Figure \ref{B11s}. Lemma
\ref{sandwichlemma} shows that this replacement is
reversible.\vspace{-0.5em}
\item The neighborhood of $o$ comes from a fork block (infork or outfork) and a square block.
(Figure \ref{deg6_2}). Then the neighborhood is a disjoint connected
component, otherwise the graph is undecomposable.\vspace{-0.5em}
\item The neighborhood of $o$ comes from two diamonds, as illustrated in
Figure \ref{doublediamonds}. Note that in Figure
\ref{doublediamonds}A, if node $p$ and node $q$ are glued together,
the result, if decomposable, is a disjoint connected component, see
Figure \ref{ddglue}A. Otherwise, $G$ is undecomposable. If $p$, $q$
are not glued together, the neighborhood is then replaced by the
graph in Figure \ref{doublediamondsimp1}. Lemma \ref{trianglespike}
shows that this replacement is reversible. On the other hand, gluing
nodes $p,q$ together in Figure \ref{doublediamonds}B, the mid-edges
are annihilated, as seen in Figure \ref{ddglue}B. Hence, the degree
of $o$ must be 4. This contradicts the fact that node $o$ has degree 6.
In this case, the neighborhood is replaced by Figure
\ref{doublediamondsimp1}.
\end{enumerate}
\begin{crl}
Figure \ref{B11s} can only be decomposed as a triangle plus a spike
plus a triangle.\label{sandwichlemma}
\end{crl}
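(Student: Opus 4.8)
The plan is to derive this as a corollary of Lemma \ref{trianglespike} by viewing the configuration of Figure \ref{B11s} as two overlapping copies of the neighborhood analyzed in that lemma. First I would isolate one wing of the sandwich: a sub-neighborhood of $o$ whose incident edges carry exactly the orientations, and whose distinguished vertices exhibit exactly the mutual non-adjacency, required by the hypothesis of Lemma \ref{trianglespike} (the analogues of nodes $x,p$ and of $y,p$ being disconnected). Applying that lemma to this wing forces it to decompose as a triangular block together with a spike, which supplies the first triangle and the spike of the claimed sandwich.

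Having peeled off this triangle and spike, I would then examine the edges still incident to $o$ together with the white node newly exposed by the first decomposition. The point is that this remaining local picture again matches the hypothesis of Lemma \ref{trianglespike}: the orientations are still incompatible with forks, and the prescribed disconnections persist. A second application of the lemma then forces the opposite wing to be a triangle as well, the central spike being the one already extracted. This yields precisely the triangle $+$ spike $+$ triangle decomposition, and uniqueness follows because each invocation of the lemma leaves the block type of the relevant edge with no alternative.

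The substantive content is the same case analysis that drives Lemma \ref{trianglespike}, reused at each stage: for the block carrying the central edge I would rule out a fork (both endpoints of a fork edge are black, so no annihilation is possible, and the edge directions are incompatible), a square (every square edge has a black endpoint, which pins down the degrees of the corner and central nodes and forces an adjacency violating the prescribed non-adjacency), and a diamond (the non-annihilable boundary edges and the forced position of the mid-edge again contradict the disconnection hypotheses). This leaves the triangle as the only surviving option both times.

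The main obstacle I anticipate is the bookkeeping of connectivity after the first decomposition: I must check that peeling off the first triangle and spike genuinely exposes the white-node adjacency pattern that the second invocation of Lemma \ref{trianglespike} demands, rather than inadvertently gluing together two nodes that the lemma assumes disconnected. Making this reduction airtight — fixing explicitly which vertices of Figure \ref{B11s} play the roles of $x$, $y$, and $p$ in each of the two applications — is where the care is needed; once that correspondence is established, the corollary is immediate.
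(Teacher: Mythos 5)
Your first step coincides with the paper's own proof: the lower wing of Figure \ref{B11s} is literally the configuration of Figure \ref{deg7simp} (with the lower degree-3 node as the center and $o$ playing the role of the spike endpoint $p$), and one application of Lemma \ref{trianglespike} forces it to decompose as a triangle plus a spike. The gap is in your second step. The upper wing is in general \emph{not} an instance of the hypothesis of Lemma \ref{trianglespike}: the closing edge of the upper triangle is the dashed edge in Figure \ref{B11s}, i.e., it may be annihilated and hence absent from the graph, whereas the configuration of Figure \ref{deg7simp} requires that edge (the edge $c=\overline{xy}$) to be present --- the case analysis inside the lemma leans on it. So ``a second application of the lemma'' is simply unavailable in the annihilated case, and your anticipated difficulty (bookkeeping of adjacencies after peeling off the first wing) is not the real obstruction.

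What the paper does instead, and what your proposal misses, is a short structural argument that bypasses any second case analysis: after the first application, $o$ already belongs to the spike block, and by the gluing rules a node can be shared by at most two blocks; hence the two remaining edges incident to $o$ must come from one and the same block. A block in which a white node has degree exactly two, with one incoming and one outgoing edge, can only be a triangle (a fork has both edges oriented the same way at its white node, while spike, diamond and square force the wrong degrees at $o$). This settles the upper wing uniformly, whether the dashed edge is present or annihilated, which is precisely why the statement is a corollary rather than a second lemma. If you replace your second invocation of Lemma \ref{trianglespike} by this two-line argument, your proof becomes correct and agrees with the paper's.
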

\begin{proof}
By Lemma \ref{trianglespike}, the lower part of Figure \ref{B11s} can only be obtained from gluing a spike and a triangle.
For the upper part, the two edges incident to $o$ must come from the same block. Judging by their directions, the block can only
be a triangle. Note that the
third edge of this triangle may be annihilated, as indicated by
dashed line in Figure \ref{B11s}.
\end{proof}
\begin{rmk}\label{rmk4}
By the similar argument as in \textit{Remark} \ref{equiv}, this
replacement is reversible.
\end{rmk}
\vspace{1em}
\begin{figure}[tbc]
 \centering
 \begin{minipage}[b]{0.25\linewidth}
 \centering
 \includegraphics[width=0.7\linewidth]{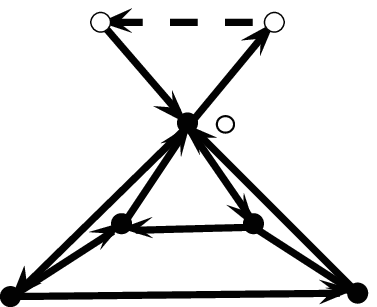}
 \caption{}\label{deg6_1}
 \end{minipage}
 \begin{minipage}[b]{0.35\linewidth}
 \centering \includegraphics[width=0.5\linewidth]{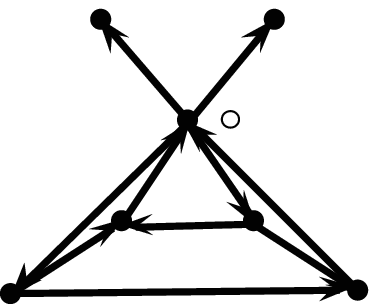}
 \includegraphics[width=0.5\linewidth]{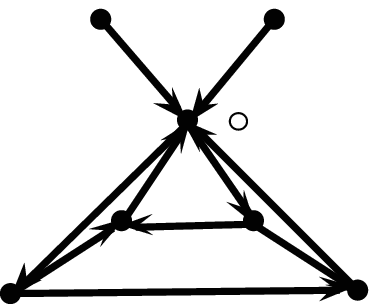}
 \caption{}\label{deg6_2}
 \end{minipage}
 \begin{minipage}[b]{0.3\linewidth}
 \centering
 \includegraphics[angle=90,width=0.5\linewidth]{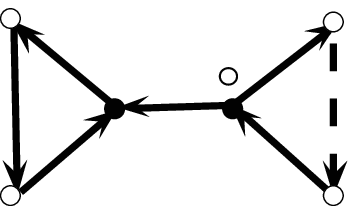}
 \caption{}\label{B11s}
 \end{minipage}
\end{figure}
\begin{figure}[tbh]
\centering
 \begin{minipage}[b]{0.45\linewidth}
 \centering
 \includegraphics[width=0.4\linewidth]{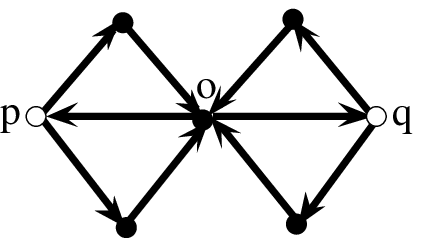}
 \includegraphics[width=0.4\linewidth]{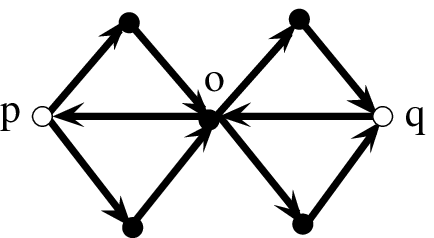}\\
 \mbox{A\hspace{1.8cm}B}
 \caption{}\label{doublediamonds}
 \end{minipage}
 \begin{minipage}[b]{0.45\linewidth}
 \centering
 \includegraphics[width=0.4\linewidth]{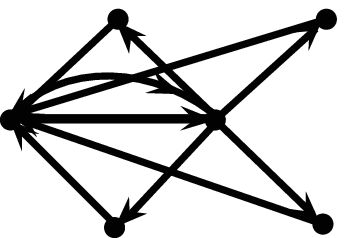}
 \includegraphics[width=0.4\linewidth]{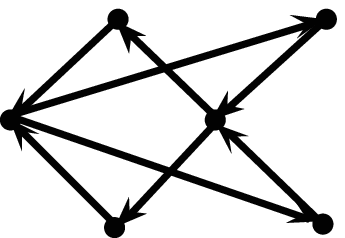}
 \mbox{A\hspace{1.8cm}B}
 \caption{}\label{ddglue}
 \end{minipage}
\end{figure}
\begin{figure}
\centering
 \begin{minipage}[b]{0.5\linewidth}
 \centering
 \includegraphics[width=0.8\linewidth]{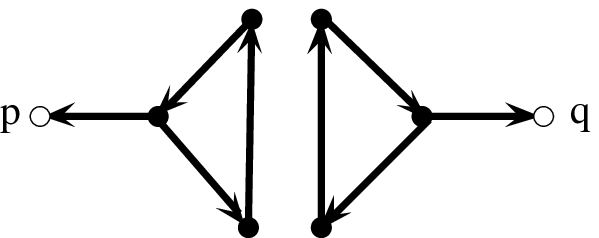}\\
 \mbox{ }
 \caption{}\label{doublediamondsimp1}
 \end{minipage}\\
\end{figure}
\subsection{Nodes of Degree 5}
If node $o$ has degree 5, there are three cases:
\begin{enumerate}
\item The neighborhood of $o$ comes from a spike and a square, see Figure
\ref{spikesquare}. In this case, we replace it with the neighborhood
in Figure \ref{deg7simp}. According to Lemma \ref{trianglespike},
the replacement is reversible.
\item The neighborhood of $o$ comes from a fork and a diamond. See Figure \ref{deg5fd}.
Note that the direction of the fork and the diamond can change, so
there are 4 subcases. In all of these cases replace the
neighborhoods by the one in Figure \ref{deg5fdsimp}. The replacement
is reversible due to Lemma \ref{trianglespike} and \textit{Remark}
\ref{equiv}.
\item The neighborhood of $o$ comes from a triangle and a diamond. See
Figure \ref{deg5td}. Similarly, note that the orientation of both the
triangle and the diamond can also be reversed, there are 4
possible neighborhoods in this case. Up to direction reversions, the neighborhood is replaced by the one
in Figure \ref{deg5tdsimp}. Lemma \ref{trianglespike} and Corollary
\ref{sandwichlemma} ensure that this replacement is reversible.
\end{enumerate}

\begin{figure}[tb]
 \centering
 \begin{minipage}[b]{0.24\linewidth}
 \centering
 \includegraphics[width=0.6\linewidth]{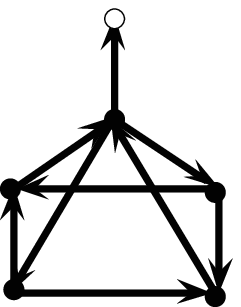}
 \caption{}\label{spikesquare}
 \end{minipage}
 \begin{minipage}[b]{0.24\linewidth}
 \centering
 \includegraphics[width=0.6\linewidth]{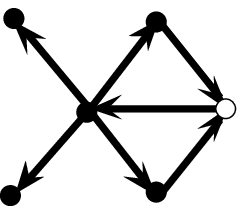}
 \caption{}\label{deg5fd}
 \end{minipage}
 \begin{minipage}[b]{0.24\linewidth}
 \centering
 \includegraphics[width=0.7\linewidth]{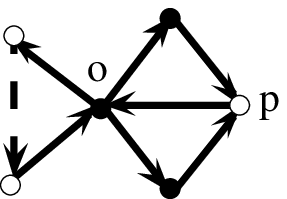}
 \caption{}\label{deg5td}
 \end{minipage}
 \begin{minipage}[b]{0.24\linewidth}
 \centering
 \includegraphics[width=0.6\linewidth]{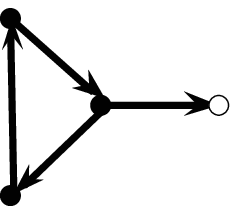}
 \caption{}\label{deg5fdsimp}
 \end{minipage}\\
 \vspace{1em}
 \begin{minipage}[b]{0.5\linewidth}
 \centering
 \includegraphics[width=0.7\linewidth]{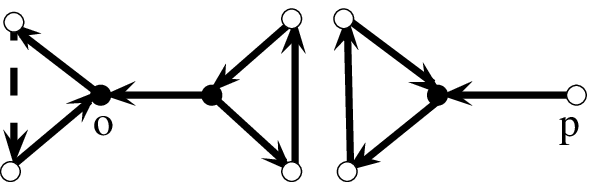}
 \caption{}\label{deg5tdsimp}
 \end{minipage}
\end{figure}



\section[Nodes of Degree 4]{Simplification on Nodes of Degree 4}
After simplification in the previous section, assume now that all
nodes in graph $G$ have degrees at most 4. In this section we shall
denote the node in consideration by $o$ and the nodes connected to it.
we call them \textit{boundary nodes}. Note that taking into account directions of
edges incident to $o$ we can distinguish the following three cases:\\
\begin{enumerate}
\item[\textbf{A:}] 4 outward edges or 4 inward edges.
\item[\textbf{B:}] 3 outward edges $+$ 1 inward edge or 3
inward edges $+$ 1 outward edge.
\item[\textbf{C:}] 2 outward edges $+$ 2 inward edges.
\end{enumerate}
We shall consider all the situations above cases by case.
\subsection{4 outward edges or 4 inward edges.}
Without loss of generality, assume that there are 4 edges directed
outwards. If the graph is decomposable, there is only one case. Namely,
the neighborhood is obtained by the gluing of two forks as below. \\
\begin{figure}[h]
\centering
\begin{minipage}[c]{.25\linewidth}
 \centering
 \includegraphics[width=0.5\linewidth]{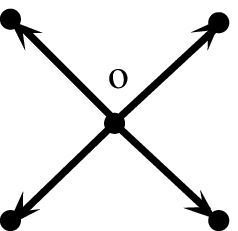}
\end{minipage}{=}
\begin{minipage}[c]{.25\linewidth}
\centering
\raisebox{-0.5em}{\includegraphics[width=0.3\linewidth]{outfork.eps}}
\end{minipage}{+}
\begin{minipage}[c]{.25\linewidth}
\centering
\includegraphics[angle=180,width=0.3\linewidth]{outfork.eps}
\end{minipage}
\caption{}\label{A1}
\end{figure}
\subsection{Three outward edges $+$ one inward edge or three
inward edges $+$ one outward edge}\label{node3o1in}
Without loss of generality, assume that $o$ is incident to three outward edges and one inward edge.\\\\
Assume that the only distinct from $o$ node $p$ that is incident to the incoming edge has degree at least two.\\
\begin{enumerate}
\item The inward edge can not be obtained from a fork. To show this, we use contradiction, suppose it's contained in a fork block, $o$
must be the white node in this block. Therefore, the other inward edge must be incident to $o$ and can not be annihilated.
This contradicts the fact that there is only one inward edge incident to $o$. Note that this
argument is still true even if $p$ has degree one.
\item Suppose the inward edge comes from a square. Since the degree of $o$ is 4,
it must be the center of the square and all four edges are contained
in the same square. This is impossible since none of the edges in a square can be annihilated
and the central node of a square is incident to at least two inward edges and two outward edges.
\item Assume the inward edge is a part of a triangular block. Suppose this
triangle does not contain any of the remaining three outward edges.
Then the other edge of the triangle which is incident to $o$ is
annihilated by another edge, denoted as $e$. In this case, $e$ and
the remaining three outward edges must come from the same block. It
can only be a square with central node $o$. On the other hand, $o$ is incident to three outward edges and one inward edge. This is a contradiction. Therefore, the triangle must contain one of the outward edges. This
forces the remaining two outward edges to be in the same block. This block must be a fork. See Figure \ref{B11}.
\item Assume that the inward edge comes from a diamond. Node $o$ must be a white node
in the diamond. Judging by the directions of the remaining edges,
two of them must be boundary edges of the same diamond, see Figure
\ref{B12}.
\item Suppose that the inward edge comes from a spike, then the remaining three
outward edges come from the same block. However there is no block that contains three outward edges incident to the same node.
Hence in this case, the graph is undecomposable.
\end{enumerate}
\begin{figure}[cbh]
 \centering
 \begin{minipage}[b]{0.24\linewidth}
 \centering
 \includegraphics[width=0.6\linewidth]{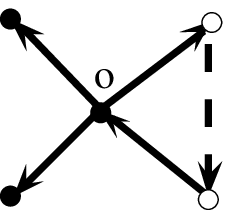}
 \caption{}\label{B11}
 \end{minipage}
 \begin{minipage}[b]{0.24\linewidth}
 \vspace{0pt}
 \centering
 \includegraphics[width=0.8\linewidth]{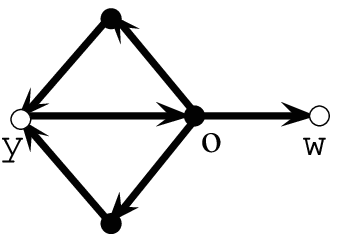}
 \caption{}\label{B12}
 \end{minipage}
 \begin{minipage}[b]{0.24\linewidth}
 \centering
 \includegraphics[width=0.7\linewidth]{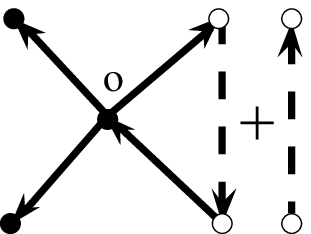}
 \caption{}\label{B2}
 \end{minipage}
\end{figure}
\noindent For Figure \ref{B11}, replace the neighborhood with the
one in Figure \ref{B11s}. According to Corollary \ref{sandwichlemma} and
\textit{Remark} \ref{rmk4}, this replacement is reversible. Denote
the new graph as $G'$ and the original graph as $G$. $G'$ has one
less node of degree 4. For Figure \ref{B12}, lemmas \ref{lma3out} and \ref{ywcomp} show
that it has a reversible replacement.
\begin{lma} If $y,w$ are not connected by an edge, Figure \ref{diaspike3} has only one possible
decomposition, shown in Figure \ref{diaspikedecom}\label{lma3out}
\end{lma}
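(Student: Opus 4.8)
The plan is to argue exactly as in the proof of Lemma \ref{trianglespike}: assume that the configuration in Figure \ref{diaspike3} admits some block decomposition, and then pin down the block of each edge incident to $o$ by elimination, using the catalogue of six block types together with the three structural constraints that have governed all the previous cases --- compatibility of edge directions inside a block, the fact that every edge of a fork or a square has a black endpoint (so it cannot be annihilated) and that the boundary edges of a diamond likewise cannot be annihilated, and the degree bookkeeping around $o$. The hypothesis that $y$ and $w$ are \emph{not} joined by an edge is the hinge of the whole argument: it is precisely what forbids the competing decomposition and forces uniqueness, mirroring the role played by the disconnection of $x,p$ in Lemma \ref{trianglespike}.

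First I would fix the three outward edges at $o$ together with the single remaining edge, and ask which block supplies the edge that the target decomposition assigns to the spike. I would rule out that this edge comes from a fork (its orientation is incompatible with the two outward edges that a fork forces to meet at a white node, exactly as in the fork case of Lemma \ref{trianglespike}), from a square (the central node of a square is incident to both inward and outward edges, contradicting the all-outward pattern at $o$, and no square edge can be annihilated to repair the degree count), and from a second diamond. The diamond case is the delicate one and splits into subcases according to whether the putative diamond contains one, both, or none of the other outward edges; in each subcase I would use that the boundary edges cannot annihilate, together with the disconnection of $y$ and $w$, to derive a contradiction --- the only way a stray diamond could close up is to glue $y$ to $w$, which the hypothesis forbids.

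Having eliminated every alternative, the remaining edges at $o$ can only organise into one diamond, carrying $o$ as a white midpoint together with its two boundary edges, plus one spike carrying the last outward edge; this is exactly the decomposition of Figure \ref{diaspikedecom}. Since each elimination step leaves a single surviving possibility, this decomposition is moreover the only one, which is what the lemma asserts. I expect the main obstacle to be the diamond subcase analysis: keeping track of which half of a candidate diamond is annihilated, and verifying that every possible closure demands an edge between $y$ and $w$, is the one place where the direction constraints and the disconnection hypothesis must be combined carefully, and it is easy to overlook a configuration in which part of the diamond is absorbed by annihilation rather than by gluing.
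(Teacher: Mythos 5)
Your elimination list has a genuine hole: you rule out fork, square, and diamond for the edge that should be the spike (edge $a$ in the paper's labeling), and then declare ``every alternative eliminated'' --- but you never address the possibility that $a$ comes from a \emph{triangular} block, and this case cannot be dismissed by direction-compatibility alone. A triangle needs one inward and one outward edge at each of its nodes, and $o$ does have an inward edge available; moreover the triangle's second edge at $o$ can be hidden by annihilation, so locally nothing forbids it. In the paper's proof this is in fact the longest case: if the triangle does not contain $f$, its other edge at $o$ must be annihilated by some edge $h$, forcing $b,c,f,h$ into a single block, which could only be a square --- impossible, since no edge of a square can be annihilated; if the triangle does contain $f$, then $b,c$ must come from one block, namely a fork (forcing $\deg p=1$, a contradiction) or a diamond (whose mid-edge would have to be annihilated after $o$ has already turned black --- again a contradiction). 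Ironically, the annihilation subtlety you flag as the ``main obstacle'' in the diamond case actually lives here, in the triangle case you omitted.

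A secondary inaccuracy: you assert that in the diamond elimination ``every possible closure demands an edge between $y$ and $w$.'' That matches only two of the three subcases. With $o$ a white node of a putative diamond, the boundary-edge pairs compatible with the directions are $\{a,b\}$, $\{a,c\}$, and $\{b,c\}$; the first two are indeed killed by the disconnection of $y$ and $w$ (the diamond would force $d$ inside it, connecting $w$ to $y$), but the third --- where $a$ is the mid-edge --- is ruled out by a different mechanism: the nodes $p,q$ would then be black nodes of the diamond, yet the edges $d,e$ still have to be glued to them. So the hypothesis on $y,w$ is essential but not the sole hinge; the degree/blackness bookkeeping at $p,q$ carries one subcase, and the entire triangle case proceeds without the hypothesis at all. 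As written, your argument establishes neither existence-forcing nor uniqueness, because a triangle-based decomposition survives your case analysis.
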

\begin{figure}
 \centering
 \begin{minipage}[b]{0.3\linewidth}
 \centering
 \includegraphics[width=0.8\linewidth]{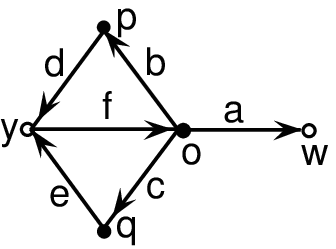}
 \caption{}\label{diaspike3}
 \end{minipage}
 \begin{minipage}[b]{0.3\linewidth}
 \centering
 \includegraphics[angle=180,origin=c,width=0.4\linewidth]{diamond.eps}
 \raisebox{1.2em}{+}
 \includegraphics[angle=-90,origin=c,width=0.4\linewidth]{spike.eps}
 \caption{}\label{diaspikedecom}
 \end{minipage}
 \begin{minipage}[b]{0.3\linewidth}
 \centering
 \includegraphics[width=1\linewidth]{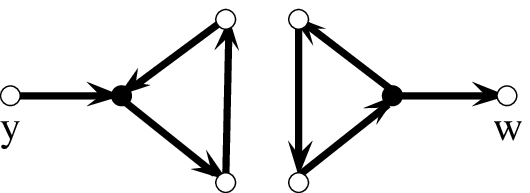}
 \caption{}\label{replacediaspike}
 \end{minipage}
\end{figure}
\begin{figure}
\centering
 \begin{minipage}[b]{0.4\linewidth}
 \centering
 \includegraphics[width=0.5\linewidth]{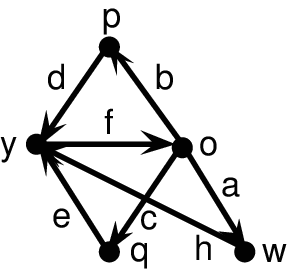}
 \caption{}\label{ywconnected}
 \end{minipage}
\end{figure}
\begin{proof}
Consider edge $a$. We claim that it comes from a spike block. We only need to rule out all other possibilities\\
Suppose $a$ comes from a fork, the other edge of the same fork can not be annihilated since
it has a black endpoint. Hence it must be edge $b$ or $c$. Assume
it is $b$, then degree of $b$ must be one. This is a contradiction.\\
Suppose $a$ comes from a square. Since the degree of $o$ is 4, $o$
must be the center of the square, which means edges $b,c,f$ are
contained in the same square block. This is a contradiction since $o$ must be incident to at least two outward edges and two inward edges.\\
Suppose $a$ is contained in a diamond. The degree of node $o$
suggests that $o$ is a white node in the diamond block containing $a$. Since the boundary edges of a diamond can not be
annihilated, two of $a,b,c,f$ must be boundary edges. Judging by the directions, the boundary edges can only be $\{a,b\}$, $\{a,c\}$
or $\{b,c\}$. If $\{a,b\}$ are two boundary edges, then $d$ must be contained in the
same diamond. This means node $w$ must be connected to node $y$.
This contradicts our assumption. The situation is similar if $\{a,c\}$
are two boundary edges. If $\{b,c\}$ are two boundary edges of
the diamond. Then $a$ is the mid-edge of the same diamond block. Therefore
nodes $p,q$ must be connected with node $w$, and they must be black. However
edge $d,e$ are incident to them. This is a contradiction.\\
Suppose $a$ comes from a triangular block. If this triangle does not contain edge $f$, the
other edge of the same triangle which is incident to $o$ must be
annihilated by another edge, denoted as $h$. So $b,c,f,h$ come from
the same block. It must be a square. However, none of the edges in a
square can be annihilated. This contradicts to the fact that $h$ is
annihilated. If the triangle contains $f$, then $b,c$ must come from
the same block. It must be a fork or a diamond. If it is the latter,
the mid-edge must be annihilated. But $o$ is already a black node
once the triangle and diamond are glued together. Contradiction. If
it is a fork, degree
$p$ must be 1. This is again a contradiction.\\
To sum up, $a$ must be a single spike.\\
Now $b,c,f$ come from the same block. This forces the block to be a diamond.
\end{proof}
\begin{lma}\label{ywcomp} If $w$ is connected to $y$ in Figure \ref{diaspike3},
then the decomposable graph must have a disjoint connected components shown in Figure \ref{ywconnected}.
\end{lma}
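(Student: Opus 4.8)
The plan is to determine, in any block decomposition of $G$, all the blocks incident to the new edge $\overline{wy}$ and to the node $o$, and then to read off that the resulting piece is both forced to be the graph of Figure~\ref{ywconnected} and sealed off from the rest of $G$. I would start from the local structure already extracted in Section~\ref{node3o1in}, case 4, which produced Figure~\ref{B12}: the node $o$ is a white node of a diamond $D$ in which two of the outward edges at $o$ are boundary edges of $D$, so the third diamond-edge at $o$ (the inward edge) is its mid-edge; $w$ is the opposite white node of $D$, $p,q$ are its two dead-ends, and the last outward edge is $a=\overline{oy}$. The hypothesis of the lemma adds the edge $\overline{wy}$, and my two goals are (i) to pin down the edge set of the component containing $o$ and (ii) to show that component is closed.

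Next I would re-run the case analysis of Lemma~\ref{lma3out} to locate the block carrying $a$, noting that this is exactly where the hypothesis matters. The arguments excluding a fork, a square, and a triangle for $a$ never used whether $w$ and $y$ are joined, so they survive verbatim; by contrast, the two diamond sub-cases (in which $\{a,b\}$ or $\{a,c\}$ are boundary edges) were discarded in Lemma~\ref{lma3out} precisely because they force $w\sim y$. Since $w\sim y$ now holds, these sub-cases become admissible, so the task is to show that each of them, together with the forced diamond $D$, produces the same five-node graph on $\{o,p,q,w,y\}$ as the clean reading in which $a$ and $\overline{wy}$ are two spikes meeting at $y$ and capping the outlets $o$ and $w$ of $D$. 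In other words, whichever block is assigned to $a$, the underlying component is $D$ together with the two edges $\overline{oy}$ and $\overline{wy}$; the apparent ambiguity is only an ambiguity of decomposition (for instance diamond-plus-fork versus diamond-plus-spike-plus-spike), not of the component itself.

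Finally I would verify that this five-node piece is a full connected component. Tracking colours through the gluings, capping $o$ and $w$ turns both outlets of $D$ black, and identifying the two spike-outlets at $y$ turns $y$ black, while $p,q$ are dead-ends of $D$. By the rule that a black node cannot be identified with any further node, none of $o,w,y$ can receive another glued block, so each carries exactly the edges already accounted for; combined with the standing bound that every node has degree at most $4$ (with $o$ and $w$ already of degree $4$ and $y$ of degree $2$), no edge can leave the set $\{o,p,q,w,y\}$. Hence this set is a disjoint connected component equal to the graph of Figure~\ref{ywconnected}.

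I expect the main obstacle to be the middle step: showing that the diamond sub-cases for $a$ that the hypothesis $w\sim y$ newly permits do not enlarge the component but merely re-describe it. Concretely, one must check edge by edge, using the orientations recorded in Figure~\ref{diaspike3}, that forcing $a$ into a second diamond reintroduces exactly the edges $\overline{oy}$ and $\overline{wy}$ and no others, so that the node set remains $\{o,p,q,w,y\}$. Once that is established, the colour-tracking of the last step is routine given the gluing rules.
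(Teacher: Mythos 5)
Your proposal is correct and follows essentially the same route as the paper: the paper's proof likewise re-runs the case analysis of Lemma~\ref{lma3out} with the hypothesis $w\sim y$ restored, so that only the spike case and the two revived diamond sub-cases survive (either $b,c,d,e,f$ form a diamond with $a,h$ as two spikes, or $a,b,d,f,h$ form a diamond with $c,e$ as two spikes, the $\{a,c\}$ sub-case being the same up to swapping $p$ and $q$), and in every case the same five-node, all-black neighborhood results, hence a disjoint connected component equal to Figure~\ref{ywconnected} --- your colour-tracking step makes explicit the disjointness claim the paper only asserts. One wording caveat: in the revived sub-cases the diamond $D$ on $\{o,p,q,w\}$ is \emph{not} a block of the decomposition (its edges are redistributed into the diamond containing $a$, and the leftover pair such as $c,e$ must be two spikes, never a fork, since their orientations at $q$ are mixed), so your phrase ``together with the forced diamond $D$'' is accurate only in the sense you give afterwards, namely that the underlying edge set of the component is that of $D$ plus $\overline{oy}$ and $\overline{wy}$ in every decomposition.
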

\begin{proof} According to the previous lemma, there are two
possibilities. Either\linebreak $b,c,d,e,f$ form a diamond and $a,h$
come from two spikes, or, $a,b,d,f,h$ form a diamond and $e,c$ come from two
spikes. In either case, the neighborhood is a disjoint connected component. Figure
\ref{ywconnected} illustrates the first case. To see the second case,
one only needs to change the labeling of the edges in Figure \ref{ywconnected}.
\end{proof}
\begin{rmk}
The replacement of Figure \ref{diaspikedecom} $\leftrightarrow$ Figure \ref{replacediaspike} is reversible.
\end{rmk}

\noindent Assume now the node incident to the edge directed inwards has
degree one.
\begin{enumerate}
\item Suppose the inward edge comes from a spike. We show that the remaining
three edges can not come from one block, and this contradicts decomposability.
Indeed, there is no block that contains node of degree 3 that is with three outward edges.
\item Suppose the inward edge comes from a fork. Since degree of $o$ is 4, it must be the white node in the
fork. Hence one of the remaining edges is contained in the same
fork. However their directions are inconsistent with a fork. This is a
contradiction.
\item The inward edge can not be obtained from a
diamond since every node in a diamond has degree at least 2.
\item The same argument shows that the inward edge doesn't come from a square.
\item If the inward edge is obtained from a triangle, then by arguments as in Lemma
\ref{lma3out}, the triangle must contain one of the remaining
outward edges. The only possible decomposition is shown in Figure \ref{B2}. The dashed edge
can only be annihilated by a spike. Otherwise, the degree of the node will be greater than one.
In this case, the neighborhood is a disjoint connected component.
\end{enumerate}

\subsection{Two outward edges $+$ two inward edges}
Here we will distinguish cases by the number of boundary nodes of degree at least 2. Denote the number of such nodes by $n$.
For example, if $n=0$, it
is a 4-star.
\subsubsection{n=0}
The neighborhood can be constructed from gluing two forks, as shown in Figure \ref{2o2in}. Also, it can be constructed from
gluing two triangles, each has one edge annihilated. It must
be a disjoint connected component, otherwise, $G$ is undecomposable.
\subsubsection{n=1}\label{node1}
Without loss of generality, assume the node \textbf{1} incident to an outward
edge has degree at least 2. (Figure \ref{2o2in_1}).
\begin{figure}[tb]
\centering
\begin{minipage}[t]{0.24\linewidth}
\centering
\includegraphics{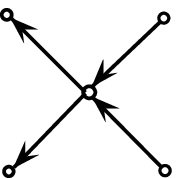}
\caption{}\label{2o2in}
\end{minipage}
\begin{minipage}[t]{0.24\linewidth}
\centering
\includegraphics{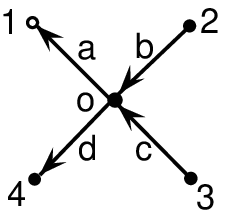}
\caption{}\label{2o2in_1}
\end{minipage}
\begin{minipage}[t]{0.24\linewidth}
\centering
\includegraphics{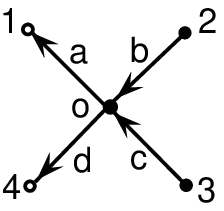}
\caption{}\label{2o2in_2}
\end{minipage}
\begin{minipage}[t]{0.24\linewidth}
\centering
\includegraphics{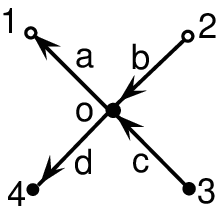}
\caption{}\label{2o2in_2_1}
\end{minipage}
\end{figure}
\begin{enumerate}
\item Edge $a$ does not come from a fork since the degrees of
both nodes~\textbf{1} and $o$ are at least 2.
\item Suppose $a$ comes from a diamond. Since degrees of nodes \textbf{2,3,4} are all
1, they can not be contained in the same diamond. So node $o$ is
a white node of the diamond before attaching edge $b,c,d$. Hence at least one
boundary edge in the diamond must be annihilated, which is impossible.
\item Suppose $a$ comes from a square. If $o$ is the central node of the square, edges
$b,c,d$ must be contained in the same square. Hence the remaining 4
nodes must be corner nodes. Thus, they all have degree 3. This
is a contradiction since only node 1 has degree more
than 1. So $o$ is a corner node of the square. But then
the degree of node $o$ must be 3. This contradicts the fact that
degree of $o$ is 4.
\item If $a$ comes from a spike block, $b,c,d$ must come from the same block, which must be a diamond. Hence,
edge $d$ is the mid-edge. However the degree of node \textbf{3} is
1, this is impossible since no boundary edge in a diamond can be
annihilated.
\item Assume that $a$ comes from a triangle $\triangle$. If the other edge of $\triangle$ incident to $o$ is not
$b$ or $c$, that edge must be annihilated by another one denoted as $e$, as shown in Figure \ref{2o2inillu}.
Thus, $b,c,d,e$ come from the same block. It can only be a square.
But the degrees of nodes \textbf{2,3,4} are all 1, which is
impossible for nodes in a square block. So either $b$ or $c$ is contained in the
same triangle.  Assume that it is $b$. Notice that node \textbf{2}
has degree 1. So the edge in $\triangle$ that connects node
\textbf{1} and \textbf{2} is annihilated by another edge, denoted as $f$. If $f$ comes from a
spike, the degree of node~\textbf{1} must be 1 after gluing. This is
a contradiction. If $f$ comes from a triangle or a diamond, the
degree of node \textbf{2} has degree at least 2 after gluing.
This is also a contradiction.
\end{enumerate}
To conclude, when $n=1$, the graph is undecomposable.
\begin{figure}
\centering
\begin{minipage}[b]{0.4\linewidth}
\centering
\includegraphics[width=0.5\linewidth]{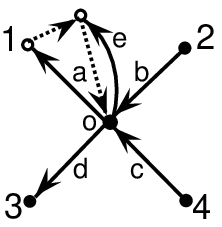}
\caption{}\label{2o2inillu}
\end{minipage}
\end{figure}
\subsubsection{n=2}
In this case, only two boundary nodes have degree at least 2.\\\\
{\bf Case 1.} Assume that the edges incident to the boundary nodes of
degree at least 2 have the same direction. Without loss of generality we assume that
both are directed outwards. (nodes \textbf{1} and \textbf{4} in Figure \ref{2o2in_2} have degree at least 2.) First, suppose either
$a$ or $d$ is a single spike, the remaining three edges must come from
the same blocks, which can only be a diamond or a square. However the
degrees of nodes \textbf{2,3} are both 1. This is impossible. Second, neither
of the edges $a$ or $d$ can be obtained from a fork since both of its
two endpoints have degree at least 2. Third, Suppose $a$ comes
from a diamond. Then $b,c$ must also be contained
in the diamond. In this case, nodes \textbf{1} and \textbf{2} must
be connected. This means the degree of node \textbf{2} is at least
2, which leads to a contradiction. Next, suppose $a$ or $d$ comes from a
square, then all four edges must be contained in the same square.
However, the degrees of node \textbf{2,3} are both one. This is again
a contradiction. Last of all, assume $a,b$ come from the same triangular block and $c,d$ come from another
triangular block. Since node \textbf{2} has degree
1, the third edge in the triangle containing edges $a,b$ is annihilated, as discussed in
the case when $n=2$, this is a contradiction. So in this case, the
graph is not
decomposable.\\\\
{\bf Case 2.} $o$ is connected to the boundary nodes of degree at least 2 by two edges. Denote the edge directed inwards by $a$ and the one directed outwards by $b$.(Figure \ref{2o2in_2_1}).
\begin{enumerate}
\item For the same reason as in Case 1, neither $a$ nor $b$ comes from
a spike block.
\item Neither $a$ nor $b$ comes from a fork since both endpoints have degrees at least 2.
\item Suppose $a$ comes from a diamond. Since the
degree of $o$ is 4, it must be a white node in the diamond. Since no
boundary edge in a diamond can be annihilated, $b,c$ must be
boundary edges in the same diamond. Then, nodes \textbf{1} and
\textbf{3} must be connected. But degree of node \textbf{3} is 1 and
the boundary edge can not be annihilated. This is a contradictions.
\item Suppose $a$ comes from a square block. Since degree of $o$ is 4, it
must be the central node of the square. Since none of the edges in a
square can be annihilated, $a,b,c,d$ must all be contained in the
same square. But the degrees of node \textbf{3,4} are one.
Contradiction.
\item Assume $a$ comes from a triangle $\triangle$. Suppose this triangle does not contain $b$ or $c$. The other edge in this triangle that is incident to $o$ must annihilated by an edge, denoted by $e$. Hence edges $b,c,d,e$ must be contained in a square block. However none of the edges in a square block can be annihilated. Therefore, the triangle must contain either $b$ or $c$. Using similar arguments as in section
\ref{node1}, $c$ is not contained in $\triangle$. So $a,b$ are contained in $\triangle$. Then we replace the neighborhood with the one in Figure
\ref{startri}. The replacement operation is reversible by Corollary \ref{sandwichlemma}.
\end{enumerate}
\begin{figure}[tbc]
\centering
 \begin{minipage}[b]{0.4\linewidth}
 \centering
 \includegraphics[width=0.4\linewidth]{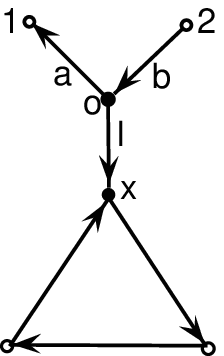}
 \caption{Case 2 replacement}\label{startri}
 \end{minipage}
 \begin{minipage}[b]{0.4\linewidth}
 \centering
 \includegraphics[width=0.8\linewidth]{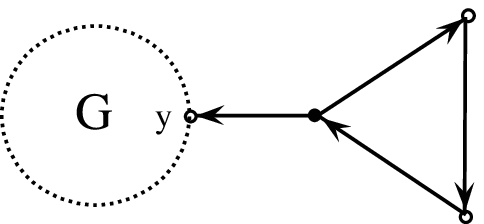}
 \caption{G'}\label{G'}
\end{minipage}
\end{figure}
\subsubsection{n=3}
Without loss of generality, assume node \textbf{1} is incident to the edge
directed outwards (denoted as $a$), and it has degree one, see
Figure \ref{2o2in_3}.
\begin{figure}[bt]
 \centering
 \begin{minipage}[b]{0.24\linewidth}
 \centering
 \includegraphics[width=0.8\linewidth]{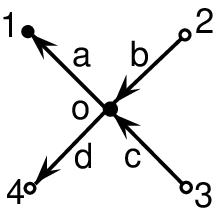}
 \caption{}\label{2o2in_3}
 \end{minipage}
 \begin{minipage}[b]{0.24\linewidth}
 \centering
 \includegraphics[width=1\linewidth]{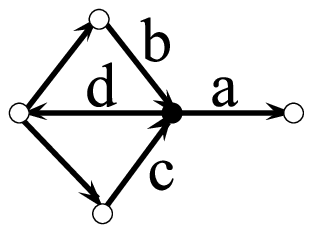}
 \caption{}\label{ds6}
 \end{minipage}
\end{figure}
\begin{enumerate}
\item Suppose that $a$ comes from a single spike. The remaining edges $b,c,d$ must come
from the same block. The only possible situation is that they come
from a diamond (Figure \ref{ds6}). Since
deg(\textbf{1})=1, nodes \textbf{1,4} are not connected. Lemma \ref{decomdiaspike} below shows
that this neighborhood can be replaced by the one in Figure \ref{G'}.
This replacement is reversible according to Lemma
\ref{trianglespike}.
\item Suppose that $a$ comes from a fork. Since deg($o$)$=4$, $o$
must be the white node in the fork. Then $d$ is also contained in
the same fork. Hence, node \textbf{4} must have degree 1. This
contradicts the fact that the degree of node \textbf{4} at least 2. So $a$ does not come from a fork.
\item Assume $a$ comes from a triangle. According to the argument in section
\ref{node3o1in}, this triangle must contain
edge $b$ or $c$  Assume that the triangle contains $a,b$. Since the
degree of node \textbf{1} is one and the degree of node \textbf{2}
is at least two, we obtain a contradiction by arguments from section
\ref{node1}.
\item Suppose that $a$ comes from a diamond, then the degree of node \textbf{1}
must be at least 2, which is a contradiction.
\item Suppose that $a$ comes from a square block. This block must also contain edges $b,c,d$ since
none of the edges in a square can be annihilated. Moreover, $o$ is
the central block of the square. This means node \textbf{1} must have
degree 3. This is a contradiction.
\end{enumerate}
\subsubsection{n=4}
\noindent In this section, we assume all four boundary nodes have degree at least 2. (Figure \ref{2o2in_4}).We focus our discussion on edge $a$. By the symmetry of the neighborhood, we can carry the same argument to any of edges $b,c,d$.
\begin{figure}[btch]
\centering
 \begin{minipage}[b]{0.23\linewidth}
 \centering
 \includegraphics[width=0.75\linewidth]{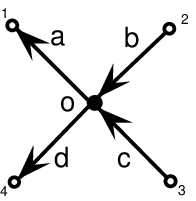}
 \caption{}\label{2o2in_4}
 \end{minipage}
\end{figure}
\begin{enumerate}
\item Edge $a$ does not come from a fork since both of its endpoints have
degrees at least 2.
\begin{figure}[btch]
 \centering
  \begin{minipage}[c]{0.3\linewidth}
 \centering
 \includegraphics[width=0.6\linewidth]{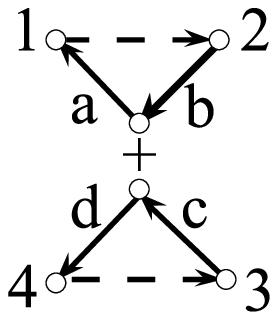}
 \caption{}\label{twotri}
 \end{minipage}
 \begin{minipage}[c]{0.3\linewidth}
 \centering
 \includegraphics[width=0.5\linewidth]{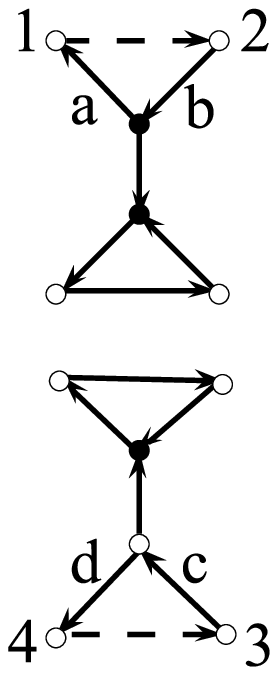}
 \caption{}\label{twotrisimp}
 \end{minipage}
 \begin{minipage}[c]{0.3\linewidth}
 \centering
 \includegraphics[width=0.5\linewidth]{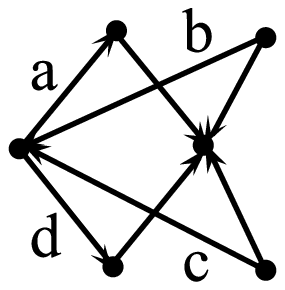}
 \caption{}\label{doublediamond2}
 \end{minipage}
\end{figure}
\item Assume that $a$ comes from a triangle. Similar to argument in
section \ref{node3o1in}, the triangle must contain $b$ or $c$.
Assume $b$ is contained in this triangle. Then $c$ and $d$ must
come from the same block.
\begin{itemize}
\item If this block is a diamond, judging by their directions, one of edges $c,d$ (assume it is $d$) must be the
mid-edge. Thus, besides $c$, there is another boundary edge incident to $o$
that comes from the same diamond. Hence, the degree of
$o$ is at least 5. This contradicts our assumption.\\
\item Assume $c,d$ come from a triangle, as shown in Figure \ref{twotri}. We replace the
neighborhood by the one in Figure \ref{twotrisimp}.
\end{itemize}
\begin{rmk} Notice that in order to perform replacement, it is necessary to determine
whether $a,b$ or $a,c$ are in the same triangle. This will be
discussed later.
\end{rmk}
\item Suppose that $a$ comes from a diamond. Since the degree of $o$ is 4, it
must be a white node of the diamond. Judging by the directions of
edges, there are three cases.
\begin{itemize}
\item $a$ is the mid-edge and
$b,c$ are the boundary edges. In this case, we obtain a neighborhood as shown in Figure \ref{ds4}. We will discuss it later in this section.
\item $a,d$ are the boundary edges and $b$ or $c$
is the mid-edge. We get the neighborhood shown in Figure
\ref{ds5}.
\begin{figure}[btch]
\centering
\begin{minipage}[c]{0.25\linewidth}
\centering
\includegraphics{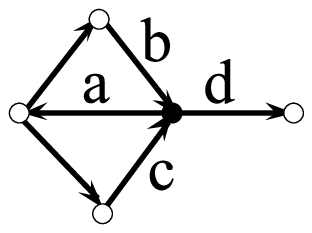}
\caption{}\label{ds4}
\end{minipage}
\begin{minipage}[c]{0.25\linewidth}
\centering
\includegraphics{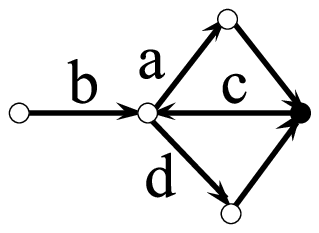}
\caption{}\label{ds5}
\end{minipage}
\end{figure}
\item $a,d$ are the boundary edges, and the
mid-edge is annihilated by another edge $e$. So $b,c,e$ come from
the same block. It must be a diamond with mid-edge $e$, see Figure \ref{doublediamond2}. In this case, the neighborhood is a disjoint
connected component.
\end{itemize}
\item If $a$ comes from a spike, $b,c,d$ must come from the same block.
Hence, this block must be a diamond, see Figure \ref{ds6}.
\item Suppose $a$ comes from a square, then $a,b,c,f$ must all be contained
in the same square. Thus, the neighborhood is the square itself.
Since the degree of $o$ is 4, the neighborhood is a disjoint
connected component.
\end{enumerate}
\begin{figure}
 \centering
 \begin{minipage}[b]{0.25\linewidth}
 \centering
 \includegraphics[width=1\linewidth]{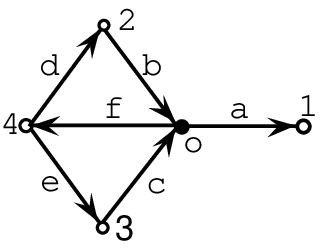}
 \caption{}\label{diaspike3plus}
 \end{minipage}
 \begin{minipage}[b]{0.25\linewidth}
 \centering
\scalebox{1.2} 
{
\begin{pspicture}(0,-0.3783375)(3.3576136,1.6141753)
\psline[linewidth=0.03cm,arrowsize=0.05291667cm 2.0,arrowlength=1.4,arrowinset=0.4]{<-}(1.4776136,1.3416625)(0.7976135,0.5616625)
\psline[linewidth=0.03cm,arrowsize=0.05291667cm 2.0,arrowlength=1.4,arrowinset=0.4]{<-}(2.2176135,0.5416625)(1.4776136,1.3416625)
\psline[linewidth=0.03cm,arrowsize=0.05291667cm 2.0,arrowlength=1.4,arrowinset=0.4]{<-}(1.4776136,-0.2583375)(0.7776135,0.5616625)
\psline[linewidth=0.03cm,arrowsize=0.05291667cm 2.0,arrowlength=1.4,arrowinset=0.4]{<-}(2.1776135,0.5416625)(1.4776136,-0.2583375)
\psline[linewidth=0.03cm,arrowsize=0.05291667cm 2.0,arrowlength=1.4,arrowinset=0.4]{<-}(0.8376135,0.5416625)(2.1376135,0.5416625)
\psline[linewidth=0.03cm,arrowsize=0.05291667cm 2.0,arrowlength=1.4,arrowinset=0.4]{<-}(3.2576134,0.5416625)(2.1776135,0.5416625)
\psdots[dotsize=0.12](2.1576135,0.5416625)
\psdots[dotsize=0.12,fillstyle=solid,dotstyle=o](3.2776136,0.5416625)
\psdots[dotsize=0.12,fillstyle=solid,dotstyle=o](1.4576136,-0.2983375)
\psdots[dotsize=0.12,fillstyle=solid,dotstyle=o](0.7776135,0.5416625)
\psdots[dotsize=0.12,fillstyle=solid,dotstyle=o](1.4576136,1.3416625)
\psline[linewidth=0.03cm,linestyle=dashed,dash=0.16cm 0.16cm,arrowsize=0.05291667cm 2.0,arrowlength=1.4,arrowinset=0.4]{<-}(1.5176135,1.3816625)(3.3176136,0.5216625)
\rput{23.094828}(0.1537633,-0.75258243){\psarc[linewidth=0.03,linestyle=dashed,dash=0.16cm 0.16cm,arrowsize=0.05291667cm 2.0,arrowlength=1.4,arrowinset=0.4]{<-}(1.9186096,0.0){1.4622312}{0.0}{81.70286}}
\usefont{T1}{ptm}{m}{n}
\rput(2.6471448,1.4916625){$\lambda$}
\usefont{T1}{ptm}{m}{n}
\rput(2.5907385,0.3916625){a}
\usefont{T1}{ptm}{m}{n}
\rput(1.6018323,0.9916625){b}
\usefont{T1}{ptm}{m}{n}
\rput(1.4290198,0.6916625){f}
\usefont{T1}{ptm}{m}{n}
\rput(0.9021448,0.9716625){d}
\usefont{T1}{ptm}{m}{n}
\rput(1.6055822,0.1316625){c}
\usefont{T1}{ptm}{m}{n}
\rput(0.9257385,0.1316625){e}
\end{pspicture}
} 
 \caption{}\label{newillu}
 \end{minipage}
 \begin{minipage}[b]{0.3\linewidth}
 \centering
 \raisebox{1.5em}{\includegraphics[origin=c,width=0.35\linewidth]{diamond.eps}
 \raisebox{2.4em}{\small{+}}
 \includegraphics[angle=-90,origin=c,width=0.35\linewidth]{spike.eps}}
 \caption{}\label{DSD}
 \end{minipage}
\end{figure}

\vspace{1em}\noindent Note that Figure \ref{ds4},\ref{ds5},\ref{ds6} represent the same neighborhood except for edge labeling. For the sake of convenience, we relabel the edges as in Figure \ref{diaspike3plus}. Note that the degree of node
\textbf{1} is at least 2. If \textbf{1} is not connected to \textbf{4}, the
only possible decomposition is the one shown in Figure \ref{DSD} (See Lemma
\ref{decomdiaspike}). We apply the replacement as in Figure
\ref{replacediaspike}. If nodes \textbf{1,4} are connected by an edge directed
from \textbf{1} to \textbf{4}, Lemma \ref{decomdiaspikeconnect} shows that there exists a
decomposition as in Figure \ref{decom}. Thus, we can apply the replacement as in Figure \ref{G''}. The following lemmas show that our choices of  replacements for the neighborhood in Figure \ref{diaspike3plus} are reversible.
\begin{lma}In Figure \ref{diaspike3plus}, assume $4$ is neither connected to \textbf{1} nor coincide with \textbf{1}. Nodes \textbf{1,2}, nodes \textbf{1,3} are disconnected. If the graph
$G$ is decomposable, then the neighborhood of $o$ can be decomposed
as in Figure \ref{DSD}. \label{decomdiaspike}
\end{lma}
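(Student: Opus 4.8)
The plan is to run the same argument as in Lemma~\ref{lma3out}, now applied to the edge $a$ of the relabeled neighborhood (Figure~\ref{newillu}), which joins $o$ to node~\textbf{1}. I claim that $a$ must come from a spike, and I would establish this by excluding the four other block types one at a time. What changes relative to Lemma~\ref{lma3out} is the bookkeeping: here every boundary node has degree at least $2$, and I am handed three non-adjacency hypotheses---\textbf{4} is neither joined to nor equal to \textbf{1}, and \textbf{1} is adjacent to neither \textbf{2} nor \textbf{3}. These three hypotheses are precisely what replaces the degree-one inputs used in Lemma~\ref{lma3out}.

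First I would dispose of the two easy blocks. Every edge of a fork joins its white outlet to a degree-one black dead end; since both $o$ (degree $4$) and \textbf{1} (degree $\ge 2$) have degree larger than one, neither can be that dead end, so $a$ is not a fork edge. For the square, a side edge joins two black corners of degree $3$, neither of which can be $o$ because $\deg o=4$; hence $a$ is a centre-to-corner edge, and $\deg o=4$ forces $o$ to be the white centre, so that $a,b,c,f$ are the four centre-to-corner edges and \textbf{1},\textbf{2},\textbf{3},\textbf{4} are the corners. But the sides of the square join each corner to two others, so \textbf{1} would be adjacent to two of \textbf{2},\textbf{3},\textbf{4}, contradicting the non-adjacency hypotheses.

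The heart of the proof, and the step I expect to be the main obstacle, is ruling out the diamond and the triangle. If $a$ came from a diamond, then since $o$ has degree $4$ it is a white (mid-edge) node, and I would split on whether $a$ is the mid-edge or a boundary edge. When $a$ is the mid-edge, \textbf{1} is the second white node, and its two boundary edges run to the same two black nodes that $o$'s boundary edges reach, namely two of \textbf{2},\textbf{3},\textbf{4}; when $a$ is a boundary edge, \textbf{1} is the black dead end (degree $2$) and its other boundary edge reaches the second white node, which is the far endpoint of $o$'s mid-edge and hence again one of \textbf{2},\textbf{3},\textbf{4}. Either way \textbf{1} acquires a forbidden neighbour. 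For the triangle I would argue exactly as in Lemma~\ref{lma3out}: the second edge of the triangle at $o$ is either annihilated---forcing $b,c,f$ together with its annihilating partner into a single square, whose edges cannot be annihilated, a contradiction---or it is a genuine edge, one of $b,c,f$, in which case the triangle's third edge joins \textbf{1} to the far endpoint of that edge, i.e.\ to one of \textbf{2},\textbf{3},\textbf{4}, once more contradicting the hypotheses.

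Having excluded forks, squares, diamonds and triangles, edge $a$ is a spike, so $o$ is a white node of that spike. Gluing the spike turns $o$ black, after which no further edge at $o$ can be annihilated (annihilation requires both endpoints white); hence the remaining edges $b,c,f$ all belong to a single block in which $o$ is a white node of degree $3$. The only such block is a diamond, with $o$ and \textbf{4} as its two outlets joined by the mid-edge $f$, with $b,c$ the boundary edges at $o$ and $d,e$ the boundary edges at \textbf{4} (to the black nodes \textbf{2},\textbf{3}). This is exactly the diamond-plus-spike decomposition of Figure~\ref{DSD}, completing the proof.
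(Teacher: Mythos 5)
Your skeleton (rule out fork, square, diamond, triangle for $a$; then a spike at $a$ forces $b,c,f$ into a single diamond) is exactly the paper's, and your fork, square, and final spike-plus-diamond steps are sound. But your triangle case has a genuine gap, and it sits precisely where the paper's proof does its real work. When the triangle through $a$ contains a genuine second edge at $o$ (say $b$), you conclude that its third edge joins \textbf{1} to \textbf{2} and ``once more contradicts the hypotheses.'' The hypothesis that nodes \textbf{1,2} are disconnected does not rule out this triangle: it only rules out the third edge \emph{surviving} in $G$, and in a block decomposition that edge can perfectly well be annihilated by an edge $\lambda$ of another block. This is exactly the scenario of Figure \ref{newillu}, and excluding it is the bulk of the paper's argument: $\lambda$ and $d$ must come from one block; if that block is a diamond, node \textbf{4} turns black, yet $f,e$ still have to be glued there; if it is a triangle, its third edge $\overline{14}$ must in turn be annihilated (since \textbf{1,4} are disconnected) by an edge $h$, which forces $h,f,e$ into a diamond with mid-edge $h$, producing an extra boundary edge between $o$ and \textbf{1} and hence $\deg o\geq 5$, a contradiction. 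None of this chain appears in your proposal; as written, you use the disconnection hypotheses as if they forbade annihilated edges, which they do not.

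The diamond case has the same omission in milder form. When $a$ is a boundary edge, you identify the second white node as ``the far endpoint of $o$'s mid-edge,'' which presumes the mid-edge at $o$ is an actual edge of $G$ (one of $b,c$). But the mid-edge is the one edge of a diamond that \emph{can} be annihilated, and if it is, the second white node need not be any of \textbf{2},\textbf{3},\textbf{4}; this is the double-diamond configuration of Figure \ref{doublediamond2}. To exclude it here one must count edges at $o$: the annihilating block would have to supply both remaining edges $b,c$ at the now-black node $o$, hence be a second diamond whose black nodes \textbf{2},\textbf{3} could then not carry the edges $d,e$ that are part of the neighborhood in Figure \ref{diaspike3plus}. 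So: right strategy and correct easy cases, but the annihilation subcases --- the actual heart of the lemma, as the length of the paper's own proof shows --- are missing, and the triangle one in particular cannot be waved away.
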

\begin{proof}
\begin{enumerate}
\item Suppose that $a$ comes from a fork. Since the degree of $o$ is 4,
$o$ must be the white node in the fork. Thus, $f$ is contained in
the same fork. Then node \textbf{4} must have degree 1. This is a contradiction.
\item Suppose that $a$ comes from a triangle, denoted as $\triangle$. Then there are two cases:\\
{\bf Case 1}: $\triangle$ contains neither $b$ nor $c$; \\
{\bf Case 2}: $\triangle$ contains either $b$ or $c$\\
In case 1, consider node $o$ in Figure \ref{diaspike3plus}. The
other edge in $\triangle$ that is incident to $o$ is
annihilated by another edge, denoted as $e$. Hence $b,c,f,e$ come from the
same block, which can only be a square block. However, none of the edges
in a square can be annihilated. Therefore, case 1 is impossible. In
case 2, assume that $\triangle$ contains $b$. The third edge in
$\triangle$ is annihilated by another edge,
denoted as $\lambda$. (See Figure \ref{newillu}). $\lambda$ and $d$
must come from the same block. It can only be a diamond or a
triangle. If it is a diamond, $\lambda$ must be the mid-edge. So
node \textbf{4} is black. However, edges $f,e$ need to be glued to \textbf{4}. This
is a contradiction. So both $b,\lambda$ belong to a triangle.
Since \textbf{4} is not connected to \textbf{1}, the edge $\overline{14}$ in this
triangle must be annihilated by another edge $h$. So $h,f,e$ come
from the same block, which must be a diamond and $h$ is the mid-edge.
This means that nodes $o$ and \textbf{1} are connected by a boundary edge of this
diamond. Thus, degree of $o$ is at least 5. This contradicts to the assumption
that deg($o$)=4.
\item Suppose that $a$ comes from a diamond. Since the degree of $o$ is 4, it
must be a white node in the diamond. Since the boundary edges can
not be annihilated. Judging by the directions of the edges, there are
only two possible cases:
\begin{itemize}
\item $a$ is the mid-edge and $b,c$
are two boundary edges of the diamond.
\item $a,f$ are the boundary edges and one
of $b,c$ is mid-edge.
\end{itemize}
In either cases, \textbf{1,2} must
be connected by a boundary edge and it can not be annihilated. This
is a contradiction.
\item Suppose that $a$ comes from a square, then $a,b,c,f$ must all be
contained in the same square. Thus, the neighborhood is the square. Moreover, nodes \textbf{1,2} must be connected. This is a contradiction.
\item Suppose edge $a$ comes from a spike. Then $b,c,f$
come from the same block. This forces the block to be a diamond. See Figure \ref{DSD}
\end{enumerate}
\end{proof}
\begin{lma}\label{decomdiaspikeconnect} In Figure \ref{diaspike3plus}, assume that \textbf{4} is connected to \textbf{1} by an
edge directed from \textbf{1} to \textbf{4}. If the
graph is decomposable, nodes \textbf{1,2} and node \textbf{1,3} are disconnected. Then the degree of \textbf{4} is 4 and the degree of
\textbf{1} is 2. In this case, there is a decomposition as in Figure
\ref{decom}. Also, it is also possible to simplify the original
graph $G$ to $G'$ (Figure \ref{G''}). $G$ is decomposable if and
only if $G'$ is decomposable. If the degree of node \textbf{3} is 2, there
is an alternative decomposition as in Figure \ref{special}. In this case,
$G$ is a disjoint connected component.
\end{lma}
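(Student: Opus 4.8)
The plan is to re-run the block-by-block elimination of Lemma~\ref{decomdiaspike}, now with the extra edge $\overline{14}$ oriented from \textbf{1} to \textbf{4}. Fixing a hypothetical decomposition of $G$ and keeping the standing hypothesis that \textbf{1} is adjacent to neither \textbf{2} nor \textbf{3}, I would determine the block of each edge of the neighbourhood by the same local tests used throughout the simplification arguments: a fork edge has a black endpoint and so cannot be annihilated; every vertex of a diamond has degree at least $2$; the centre of a square carries two in- and two out-edges, none of which annihilate; a spike supplies a degree-one black dead end; and orientations must be consistent inside any one block. The crucial observation is that the only place in the proof of Lemma~\ref{decomdiaspike} where the hypothesis ``\textbf{4} not joined to \textbf{1}'' was used is the step forcing the auxiliary edge $h$ (equivalently $\lambda$) and thereby $\deg(o)\ge 5$. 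With $\overline{14}$ now present that step no longer applies, and this is exactly the slot in which the new decomposition lives.

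Next I would show that $\overline{14}$ is forced to serve as the third edge of a triangular block $\triangle$ that also contains the edge previously playing the role of the spike: its orientation is compatible with a triangle, and all competing block types for $\overline{14}$ are eliminated by the degree and non-annihilation constraints verbatim as in Lemmas~\ref{lma3out} and~\ref{decomdiaspike}. Once $\triangle$ is isolated, the degrees at the two endpoints of $\overline{14}$ are read off from what remains of the diamond. Because no boundary edge of the residual diamond may be annihilated, node \textbf{4} is forced to collect the edge of $\triangle$ incident to it together with the surviving diamond edges meeting it, which pins the degree of \textbf{4} at $4$; node \textbf{1} is then left incident to exactly two edges, so its degree is $2$. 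This fixes the neighbourhood and identifies it with Figure~\ref{decom}.

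With the decomposition in hand, I would define the replacement $G\to G'$ of Figure~\ref{G''} by excising the triangle-plus-spike piece, and argue reversibility in both directions exactly in the spirit of Remark~\ref{equiv} and Corollary~\ref{sandwichlemma}. In the forward direction, restricting a decomposition of $G$ to the complement of the excised piece decomposes $G'$; conversely, any decomposition of $G'$ extends to one of $G$ by regluing the removed blocks, since the degree analysis just carried out shows that the excised piece can only have arisen in the single way described. For the final assertion I would observe that when the degree of \textbf{3} equals $2$ the node \textbf{3} becomes symmetric to \textbf{1}, so a second triangle splits off on that side; this produces the alternative decomposition of Figure~\ref{special}, and since then every white outlet of the neighbourhood is consumed internally, the neighbourhood closes up into a disjoint connected component.

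I expect the degree-forcing of the second paragraph to be the main obstacle. The danger is a ``leaky'' decomposition in which an extra edge at \textbf{1} or \textbf{4} is secretly annihilated, either raising $\deg(o)$ above $4$ or breaking $\triangle$ apart; ruling these out is again a count that terminates in the contradiction $\deg(o)\ge 5$, but here it has to be run against every admissible orientation of $\overline{14}$ and of the residual diamond, and that case bookkeeping is where the real work lies.
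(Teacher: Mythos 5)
Your overall strategy is the paper's: the proof of this lemma is a delta on Lemma \ref{decomdiaspike}, and you correctly locate the one place where the disconnection of \textbf{1,4} was used (the step manufacturing the annihilating edge $h$ and the contradiction $\deg(o)\ge 5$). But the concrete block analysis in your second paragraph goes wrong, and that is where the lemma's actual content lives. First, $\overline{14}$ does not sit in a triangle with ``the edge previously playing the role of the spike'': with $a=o\rightarrow\textbf{1}$, $\overline{14}=\textbf{1}\rightarrow\textbf{4}$ and $f=o\rightarrow\textbf{4}$, these three cannot close a cyclically oriented triangle. In the forced decomposition, $\overline{14}$ lies in a triangle together with $d$ and the edge $\lambda$ that annihilates the third side of the triangle containing $a,b$ --- i.e.\ the chain from Lemma \ref{decomdiaspike} is run unchanged up to the point where $\overline{14}$, now an actual edge, terminates it. Second, there is no ``residual diamond'': Figure \ref{decom} consists entirely of triangular blocks ($a,b$ with the annihilated edge $\overline{12}$; then $\lambda,d,\overline{14}$; then $e,f$ with $c$), so your degree-pinning argument via non-annihilation of diamond boundary edges has no object to apply to. The paper pins $\deg(\textbf{4})=4$ differently: the block containing $e,f$ could a priori be a diamond, which would force $\deg(\textbf{4})=5$, and this is excluded by the standing assumption that all nodes of degree $\ge 5$ have already been simplified away --- an external fact your proposal never invokes, and without which the diamond option for $e,f$ is not eliminated. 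The exact degrees then follow because \textbf{1} and \textbf{4} are black after the gluings, so no further edges can attach.

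The final assertion is also misread: when $\deg(\textbf{3})=2$ the alternative decomposition of Figure \ref{special} is not ``a second triangle splitting off,'' but the survival of the original diamond $b,c,d,e,f$ with $a$ and $\overline{14}$ attached as two spikes; this is exactly why $\deg(\textbf{3})=2$ is needed (node \textbf{3} is a black boundary node of that diamond, so nothing else may be glued there) and why the neighborhood then closes into a disjoint connected component, every node having become black. Your reversibility paragraph for $G\leftrightarrow G'$ is fine and matches the paper (one direction is immediate, the converse follows from Lemma \ref{trianglespike}), and the case sweep you fear in your last paragraph is unnecessary, since the lemma fixes the orientation of $\overline{14}$ from \textbf{1} to \textbf{4}. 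So: right skeleton, but the middle of the argument --- the identification of the blocks, the mechanism forcing the degrees, and the description of the alternative decomposition --- would need to be redone as above.
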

\begin{figure}[bt]
 \centering
 \begin{minipage}[b]{0.3\linewidth}
 \centering
 \includegraphics[width=0.9\linewidth]{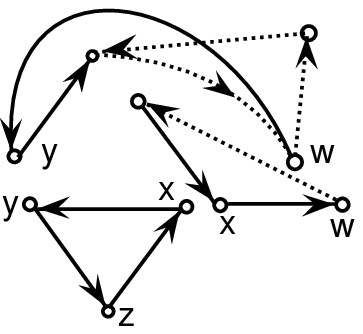}
 \caption{}\label{decom}
 \end{minipage}
 \begin{minipage}[b]{0.3\linewidth}
 \centering
 \includegraphics[width=0.5\linewidth]{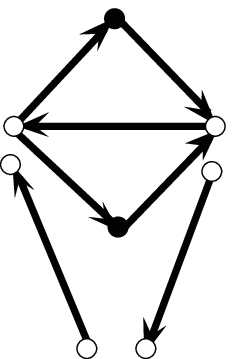}
 \caption{}\label{special}
 \end{minipage}
 \begin{minipage}[b]{0.3\linewidth}
 \centering
 \includegraphics[width=0.5\linewidth]{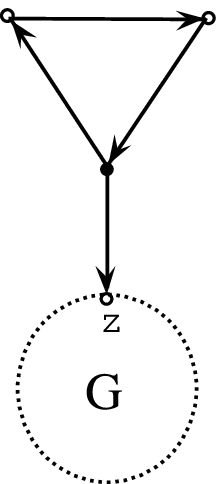}
 \caption{G'}\label{G''}
 \end{minipage}
\end{figure}
\begin{proof}
The argument differs from the previous one only in the place
when $a$ is assumed to come from a triangle. Notice that \textbf{4} is
connected to \textbf{1}. If $a,b$ comes from a triangular block $\triangle$,
the edge $\overline{41}$ must come from another block. This block can be a
triangle or a diamond. Thus, $e,f$ must both come from the other
block, which can not be a diamond since this will force the degree of node
\textbf{4} to be 5. Recall that we already simplified all nodes of decomposable
graph so that the degree of any node does not exceed 4. Thus, this block containing
$e,f$ must be a triangle. The corresponding decomposition is shown in Figure \ref{decom}).
In this case, if degree of node \textbf{3} is at least 3, there is another edge incident to it. The neighborhood can
be replaced by the one in Figure \ref{G''}. It is trivial
that if $G$ is decomposable, so is $G'$. The converse statement follows
from Lemma \ref{trianglespike}.
\end{proof}
\begin{rmk} Note that we have found all possible decomposition of the neighborhoods of
nodes with degree 4. Except some cases when the neighborhood is a disjoint connected component,
we want to identify which decompositions the considered neighborhood can have. To be more specific, to determine decomposition of the neighborhood of a node $o$ we want to
use only the information that can be directly derived from the graph:
\begin{itemize}
\item How is node $o$ connected to the boundary nodes? We want to check the direction of the edges connecting node $o$ and its boundary nodes.
\item How are the boundary nodes connected to each other? We want to check if and how some of the boundary nodes are connected to each other.
\item If necessary, we want to check if there is any other node that is connected to the boundary nodes, and how are they connected.
\end{itemize}
This method will be discussed in detail in the next section.
\end{rmk}
\begin{rmk} If node \textbf{4} coincides with node \textbf{1}, we have a neighborhood as in Figure \ref{dd}.
In this case, we need to examine nodes $p,q$.
\begin{itemize}
\item If both nodes have degree
two, then there are two possible decomposition. Namely, a diamond plus
a spike or two triangles.
\item If at least one of $p,q$ has degree more than two,
then it must come from gluing two triangles.
\end{itemize}
Figure \ref{dd} shows the decomposable neighborhood. All cases other the the above two give an undecomposable graph.
\end{rmk}
\begin{figure}
\centering \begin{minipage}[b]{0.4\linewidth} \centering
\includegraphics[width=0.5\linewidth]{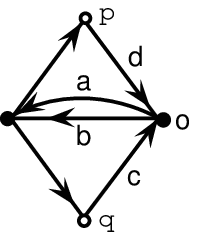}
\caption{}\label{dd}
\end{minipage}
\end{figure}
\section{Identification when n=4}
\vspace{1em}In the previous section, we have found all possible neighborhoods and possible decomposition of node of degree 4. In order to perform proper replacement, we need to identify the neighborhood by examining the boundary nodes of node $o$. For example, in the situation when the neighborhood may come from two triangles, in order to choose proper replacement, we must determine whether $a,b$ or $a,c$ are in the
same triangle. Also, in some other cases, we need to determine which one of the
four edges comes from a spike, the remaining three edges then come from a diamond.\\\\
To determine decomposition, we must consider connectivity between the boundary nodes.
First of all, we need to consider decompositions depending on how
nodes \textbf{3,4} are connected to node \textbf{1}.\\
\subsection{Node \textbf{1} is Connected to Node \textbf{4} and \textbf{3}}
Assume nodes \textbf{1,4} are connected by an edge denoted by $\lambda$ and nodes
\textbf{1,3} are connected by an edge denoted by $\gamma$. Let us consider directions of
$a,d,\lambda$ and $a,c,\gamma$. More exactly, we check if $\lambda$ is directed from node
\textbf{1} to node \textbf{4} and
if $\gamma$ is directed from node \textbf{1} to \textbf{3}. (See Figure \ref{1to23})\\
\begin{figure}[btch]
\centering
\includegraphics[width=0.21\linewidth]{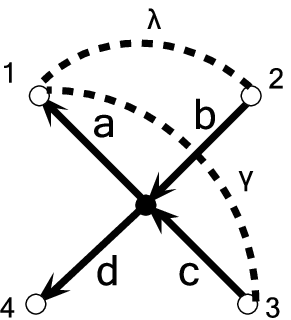}
\caption{}\label{1to23}
\end{figure}
Suppose $\lambda$ is directed from node
\textbf{2} to node \textbf{1} and
$\gamma$ is directed from node \textbf{3} to \textbf{1}, then neither
$a,b,\lambda$ nor $a,c,\gamma$ come from a triangle. Assume $a$
comes from a spike, then $b,c,d$ come from the same block which must
be a diamond. Hence, nodes \textbf{2,4} must be connected and node
\textbf{2} is a black node before $\lambda$ is attached. In this case,
node \textbf{1} must coincide with node \textbf{4}. But the directions of
$\lambda,\gamma$ are prescribed by the decomposition. Hence, the graph is
undecomposable. If $a$ comes from a diamond, the diamond must also
contain $b,c,\lambda,\gamma$. Again, their directions do not fit in a diamond block.
To conclude, if $a,b,\lambda$ or $a,c,\gamma$ can not
form a triangular block,
the graph is undecomposable.\\\\
Suppose $a,b,\lambda$ have the same direction setup as a triangular block, and
$a,c,\gamma$ don't. We claim that if the graph is decomposable, then
$a,b,\lambda$ must come from a triangular block. Suppose the contrary. Notice
that $a,c,\gamma$ do not come from a triangular block. Edge $a$ comes either
from a spike or from a diamond. In the first case, $b,c,d$ come from the same block which
must be a diamond, and node \textbf{2} is connected only to nodes \textbf{4} and $o$. Since node
\textbf{2} is connected to \textbf{1}, node
\textbf{1} must coincide with node \textbf{4}. But then the direction of
$\gamma$ does not match the direction of the corresponding edge in a diamond. This is a contradiction. If $a$ comes from a
diamond block, the block can contain $b,c,\lambda,\gamma$ or $b,d,\lambda$ or
$c,d,\gamma$.
But none of this cases has the directions that match with a diamond block. This again leads to a contradiction.\\\\
Suppose $\lambda=\overrightarrow{12}$and
$\gamma=\overrightarrow{13}$. If $a$
comes from a spike, then $\lambda,\gamma$ come from the same block.
This block can be a fork or a diamond. But the former is impossible since
the degree of node \textbf{2} is 2. If it is the latter, $b,c$ must
be boundary edges of this diamond. Thus, the mid-edge must connect
node \textbf{1} and $o$. This forces node \textbf{1} to coincide with node \textbf{4}, as shown in Figure \ref{dd}. Suppose that $a$ come from a diamond. There are two
possibilities. First, $a,b,d$ come from the same diamond. Second,
$a,b,c$ come from the same diamond. If it is the former, node
\textbf{1} is already black before $\gamma$ is glued. This is impossible.
Suppose it is the latter. Notice that node \textbf{2,3} and
\textbf{3,4} are disconnected unless nodes \textbf{1} coincide with node \textbf{4}.
\begin{lma}
Suppose there is an edge directed from node
\textbf{1} to node \textbf{2} and an edge directed from node \textbf{1}
to node \textbf{3}, nodes \textbf{2,3} and node \textbf{3,4} are
disconnected, both nodes \textbf{3} and \textbf{4} have degree 2. If the graph $G$ is decomposable,
then there is a decomposition of $G$ in which
$a,b,c,\lambda,\gamma$ come from the same diamond.
\end{lma}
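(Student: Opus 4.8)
The plan is to start from any block decomposition of $G$ (one exists since $G$ is decomposable) and to identify the block containing edge $a$, showing that it must be the diamond on the edge set $\{a,b,c,\lambda,\gamma\}$. The first step is to establish that $a$ comes from a diamond and that $a,b,c$ all lie in it. Under the present orientations $\lambda=\overrightarrow{12}$ and $\gamma=\overrightarrow{13}$, edge $a$ cannot come from a fork (both endpoints of $a$, namely $o$ and \textbf{1}, have degree at least $2$), nor from a square (which, as in Lemma \ref{decomdiaspike}, would trap all four edges at $o$ in one square and force adjacencies among \textbf{1},\textbf{2},\textbf{3} incompatible with the disconnection hypotheses); and the spike case, by the analysis immediately preceding the lemma, forces node \textbf{1} to coincide with node \textbf{4}, which ``\textbf{3},\textbf{4} disconnected'' together with $\gamma=\overrightarrow{13}$ rules out. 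Hence $a$ comes from a diamond $D$. Of the two ways three edges at $o$ can sit in $D$, the grouping $\{a,b,d\}$ is impossible: the orientation of $d$ makes $b$, rather than $a$, the mid-edge of that diamond, so node \textbf{1} becomes a dead end before $\gamma$ can be glued. Therefore $a,b,c$ lie in $D$.

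I would then read off the vertices and the colouring of $D$. Since a diamond has exactly four vertices and $a,b,c$ are incident to $o$ with three distinct other endpoints \textbf{1},\textbf{2},\textbf{3}, the vertex set of $D$ is $\{o,\textbf{1},\textbf{2},\textbf{3}\}$. Recall that in a diamond only the two endpoints of the mid-edge are white, that these have degree $3$, and that the two dead ends have degree $2$ and form the unique pair of mutually non-adjacent vertices. As $o$ carries the three edges $a,b,c$ of $D$, it is a white node, so exactly one of $a,b,c$ is the mid-edge. The decisive step is to locate the dead ends from the connectivity hypotheses: among \textbf{1},\textbf{2},\textbf{3}, the pair \textbf{2},\textbf{3} is disconnected while \textbf{1} is joined to both of them by $\lambda$ and $\gamma$; since the dead ends are exactly the non-adjacent pair, $\{\textbf{2},\textbf{3}\}$ are the black nodes and $\{o,\textbf{1}\}$ the white nodes. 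This forces $a=\overline{o1}$ to be the mid-edge and $b=\overline{o2}$, $c=\overline{o3}$ to be two of the boundary edges.

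The remaining two boundary edges of $D$ must join the white node \textbf{1} to the two dead ends \textbf{2},\textbf{3}; these are precisely $\lambda$ and $\gamma$, and their orientations \textbf{1}$\to$\textbf{2}, \textbf{1}$\to$\textbf{3} combine with the directions of $a,b,c$ into two coherently oriented triangles sharing the mid-edge $a$, i.e.\ a legitimate diamond. Thus $D=\{a,b,c,\lambda,\gamma\}$, the asserted decomposition. As a consistency check, \textbf{2} and \textbf{3} being dead ends makes their degrees equal to $2$ (matching the hypothesis on \textbf{3}, and forcing the same for \textbf{2}), while node \textbf{4}, disconnected from \textbf{3} and of degree $2$, lies outside $D$ with $d$ supplied by a spike glued at $o$ — recovering the diamond-plus-spike picture of Figure \ref{DSD}.

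The step I expect to be the main obstacle is the first paragraph: excluding every block type for $a$ except the diamond, and excluding the grouping $\{a,b,d\}$, is where all of the orientation and degree hypotheses are consumed, and it requires tracking carefully which edge plays the role of the mid-edge. Once $a,b,c$ are known to share a diamond, the identification of $\lambda$ and $\gamma$ as its two remaining boundary edges is essentially forced by the single structural fact that the dead ends of a diamond are its unique non-adjacent pair of vertices.
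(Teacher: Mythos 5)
Your proposal has a genuine gap: you exclude fork, square, and spike as the block containing $a$ and then conclude ``Hence $a$ comes from a diamond $D$,'' but you never consider the case where $a$ comes from a \emph{triangular} block. With the given orientations this case cannot be ruled out: $a,b,\lambda$ (or $a,c,\gamma$) are coherently oriented and can legitimately form a triangle in a decomposition of $G$. The paper devotes the longest part of its proof to exactly this case, and the resolution is not a contradiction. Instead, using the hypotheses that nodes \textbf{3,4} are disconnected and that nodes \textbf{3} and \textbf{4} have degree $2$, one traces the forced annihilations (the third side of the triangle containing $c,d$ must be annihilated by an edge $\tau$, and $\tau,\gamma$ must then form a triangle joining \textbf{1} to \textbf{4}) and finds that the neighborhood is the disjoint connected component of Figure \ref{tritri}, which admits an \emph{alternative} decomposition in which $a,b,c,\lambda,\gamma$ form a diamond plus two spikes. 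This is precisely why the lemma is phrased existentially (``there is a decomposition of $G$ in which \dots''): it does not claim that every decomposition places $a$ in a diamond, and indeed that stronger statement --- which is what your argument, as structured, would establish --- is false. Any repair of your proof must replace the missing case not with another exclusion but with the construction of this alternative diamond decomposition.

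A secondary, smaller point: once you do restrict to the case that $a$ lies in a diamond, your second and third paragraphs (locating the dead ends as the unique non-adjacent pair $\{\textbf{2},\textbf{3}\}$, forcing $a$ to be the mid-edge and $\lambda,\gamma$ to be the remaining boundary edges) are a correct and somewhat more structural way of reaching the same conclusion as the paper's case 3, which argues more briefly that the only alternative grouping $\{a,b,d\}$ with $b$ as mid-edge blackens node \textbf{1} before $\gamma$ can be glued. That portion of your argument is fine; the triangle case is the missing idea.
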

\begin{proof} Suppose that it is false.
\begin{enumerate}
\item If $a$ comes from a spike, then $b,c,d$ come from the same
block which must be a diamond. Thus, node \textbf{3} is a black node of the diamond and
$\gamma$ can not be attached. Contradiction.
\item If $a$ comes from a triangle, the block could contain either edges
$b,\lambda$ or edges $c,\gamma$.\\
In the former case, edges $c,d$ come from the same triangle
$\triangle$. Since nodes \textbf{3,4} are disconnected the third
edge of $\triangle$ is annihilated by another edge, denoted as
$\tau$. Hence $\tau,\gamma$ come from the same triangle, and node
\textbf{1} is connected to \textbf{4}. If $G$ is decomposable, the neighborhood is as in Figure \ref{tritri}.\\
\noindent Moreover,
degree of node \textbf{1} is 4 and degree of node \textbf{4} is 2.
Notice that the degree of node \textbf{2} is 2, the neighborhood
is a disjoint connected component. In this case, it can also be
decomposed as a diamond containing
$a,b,c,\lambda,\gamma$ plus two spikes.\\
In the latter case,  the triangle which contains $a$ also contains $c,\gamma$. By the same argument as above,
if $G$ is decomposable, the neighborhood of $o$ is as in Figure \ref{tritri}. In this case, it's a disjoint connected component,
and it can be obtained by gluing two spikes $\overline{14},d$ to a
diamond that contains $a,b,c,\lambda,\gamma$.
\begin{figure}
\centering
\includegraphics[width=0.22\linewidth]{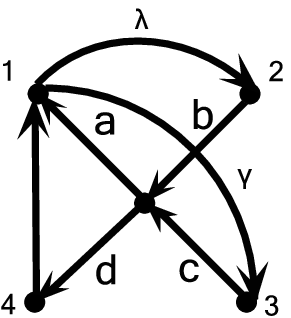}
\caption{}\label{tritri}
\end{figure}
\item Suppose $a$ comes from a diamond. According to the assumption,
$a,b,c,\lambda,\gamma$ do not come from the same diamond. Therefore, the
diamond containing $a$ must contain $b,d$ with $b$ as its mid-edge.
Then node \textbf{1} is black and $\gamma$ can not be attached.
Contradiction.\\
\item If $a$ comes from a square, it must contain
$b,c,d,\lambda,\gamma$. Moreover, nodes \textbf{2,4} and
\textbf{3,4} must be connected. This
contradicts our assumption.\\
\end{enumerate}
To conclude, under the given assumption, $a,b,c,\lambda,\gamma$ come
from the same diamond in one of the decomposition of $G$.
\end{proof}
\begin{rmk}If nodes \textbf{2,3} and nodes \textbf{3,4} are connected,
and degrees of nodes \textbf{2,3} are both two, node \textbf{4} must
coincide with node \textbf{1}. In this case, the neighborhood is
shown in the Figure \ref{dd}.
\end{rmk}
\begin{rmk}
In the above situation, if $G$ is decomposable, we may have more than one decompositions.
However, according to the proof of the lemma, there are more than one decomposition only when
the neighborhood is a disjoint connected component. We list all such disjoint connected components in Figure \ref{nonunique}. If the whole graph coincides with such a disjoint component from this list we know already all the possible decompositions and we don't need to do simplifying replacements. On the other hand, if a decomposable graph does not coincide with any of the graphs in Figure \ref{nonunique}, then the decomposition is unique.
\end{rmk}
\begin{rmk}
The lemma above explains that by examining the connectivity of nodes \textbf{2,3} and
nodes \textbf{3,4}, we can tell if $a$ comes from a
diamond. Moreover, if we can rule out the possibility
that $a$ comes from a diamond and node \textbf{1}$\neq$\textbf{4},
we can furthermore check if the neighborhood comes from a square.
In the following argument, assume we already rule out the possibility that $a,b,c,d$ comes from square, diamond or spike. This means, if the graph is decomposable, edges $a,b,c,d$ must come from two triangles.
\end{rmk}
\vspace{1em}\noindent In order to determine if $a,b$ or $a,c$
come from the same triangle, it is necessary to examine node
\textbf{4}.
\\\\
Assume node \textbf{4} is connected to both nodes \textbf{3} and
\textbf{2}, see Figure \ref{22od_1}. If nodes \textbf{3,4} are connected by an edge directed from node \textbf{3} to nodes \textbf{4}, relabel the indices of nodes as the following: \textbf{4} $\rightarrow$ \textbf{1}, \textbf{3} $\rightarrow$ \textbf{2}, \textbf{2} $\rightarrow$ \textbf{3} and \textbf{1} $\rightarrow$ \textbf{4}. Then apply the previous argument. It's similar if nodes \textbf{2,4} are connected by an edge directed from node \textbf{2} to nodes \textbf{4}. Hence, without loss of generality,
assume edge $\overline{24}$ is directed from node \textbf{4} to
\textbf{2} and edge $\overline{34}$ is directed from node \textbf{4}
to \textbf{3}. If there is no node (except for
node $o$) which is connected to any of the nodes
\textbf{1,2,3,4}, then it is a disjoint connected component.\\\\
\begin{figure}[btch]
\centering
 \begin{minipage}[b]{0.24\linewidth}
 \centering
\scalebox{1.2} 
{
\begin{pspicture}(1,-0.46249095)(3.3936412,1.7518841)
\psline[linewidth=0.04cm,arrowsize=0.05291667cm 2.0,arrowlength=1.4,arrowinset=0.4]{<-}(1.5086414,1.361884)(2.2886415,0.581884)
\psline[linewidth=0.04cm,arrowsize=0.05291667cm 2.0,arrowlength=1.4,arrowinset=0.4]{<-}(2.2886415,0.581884)(3.0886414,1.341884)
\psline[linewidth=0.04cm,arrowsize=0.05291667cm 2.0,arrowlength=1.4,arrowinset=0.4]{<-}(2.2886415,0.581884)(3.0886414,-0.21811596)
\psline[linewidth=0.04cm,arrowsize=0.05291667cm 2.0,arrowlength=1.4,arrowinset=0.4]{<-}(1.4886414,-0.23811595)(2.2886415,0.56188405)
\psline[linewidth=0.04cm,arrowsize=0.05291667cm 2.0,arrowlength=1.4,arrowinset=0.4]{<-}(3.0686414,1.361884)(1.5086414,1.361884)
\psline[linewidth=0.04cm,arrowsize=0.05291667cm 2.0,arrowlength=1.4,arrowinset=0.4]{<-}(3.0686414,-0.23811595)(1.5286413,-0.23811595)
\psdots[dotsize=0.12](2.2886415,0.56188405)
\psdots[dotsize=0.12](3.0886414,1.361884)
\psdots[dotsize=0.12](1.4686414,1.361884)
\psdots[dotsize=0.12](1.4686414,-0.25811595)
\psdots[dotsize=0.12](3.0886414,-0.25811595)
\rput{-84.55601}(1.5587314,1.5094286){\psarc[linewidth=0.04,arrowsize=0.05291667cm 2.0,arrowlength=1.4,arrowinset=0.4]{<-}(1.6094258,-0.10245822){1.4760472}{80.36318}{180.0}}
\rput{1.4405128}(-0.003158241,-0.0736943){\psarc[linewidth=0.04,arrowsize=0.05291667cm 2.0,arrowlength=1.4,arrowinset=0.4]{<-}(2.9294257,-0.16245823){1.4760472}{80.36318}{180.0}}
\usefont{T1}{ptm}{m}{n}
\rput(2.289579,1.571884){$\lambda$}
\usefont{T1}{ptm}{m}{n}
\rput(3.1428602,0.81188405){$\gamma$}
\usefont{T1}{ptm}{m}{n}
\rput(1.3155164,1.5318841){1}
\usefont{T1}{ptm}{m}{n}
\rput(3.267235,1.491884){2}
\usefont{T1}{ptm}{m}{n}
\rput(3.2162976,-0.30811596){3}
\usefont{T1}{ptm}{m}{n}
\rput(1.3295789,-0.28811595){4}
\end{pspicture}
}
 \caption{}\label{22od_1}
 \end{minipage}
\end{figure}
Suppose we can find a node $x\neq o$ that is connected to some of the nodes \textbf{1,2,3,4}.
We check if there is any node among \textbf{1,2,3,4} that is connected to $x$  Assume
$x$ is connected to only one of \textbf{1,2,3,4}. Without loss of generality,
assume $x$ is connected to \textbf{1} by an edge denoted as $\tau$. (Notice that $x$ may be connected to
nodes in the graph other than \textbf{1,2,3,4}). In this case, if edges $a,b$ come from
the same triangle, then edges $c,d$ come from another triangle,
denoted as $\triangle_1$. Moreover, $\tau,\gamma$ come from the same
block, which must be a triangle $\triangle_2$. Because $x$ is only
connected to one of nodes \textbf{1,2,3,4}, the third edge of
$\triangle_2$ is annihilated by another edge, denoted as $\eta$.
However, nothing can be attached to the node \textbf{3} after gluing $\triangle_2$ to
$\triangle_1$. Contradiction. Hence, edges $a,c$ come from the same
triangle and $b,d$ come from the same triangle. Therefore, edges
$\tau$ and $\lambda$
come from the same block. This is impossible.\\\\
Assume that there is a node $x\neq o$ that is connected to only two nodes of \textbf{1,2,3,4}.
Notice that $x$ may be connected to nodes
other than \textbf{1,2,3,4}. Up to a relabeling of indices, there are
two possible situations: Either $x$ is connected to nodes \textbf{1,4} or
nodes \textbf{1,3}. \\\\
First, suppose $x$ is connected to nodes \textbf{1,4}
by edges $\tau,\eta$ respectively.\\
If $a$ comes from a spike, then $\tau,\lambda,\gamma$ come from the
same block that is a diamond. Judging by the directions of $\lambda,\gamma$, $\tau$
must be the mid-edge. Therefore, node $x$ must coincide with node
$o$. This contradicts our assumption.\\
Suppose $a$ comes from a diamond. Since the degrees of node
\textbf{1} and $o$ are at least 4, $a$ must be the mid-edge.
Therefore, the diamond must contain $\lambda,\gamma,b,d$. Moreover,
the degrees of node \textbf{2,3} must be two. This is a
contradiction.\\\\
We can also rule out the possibility that $a$ comes from a square
since both its endpoints have degree 4. To conclude, $a$ must come from a triangle. Otherwise, $G$
is undecomposable. \\
Suppose $a,\tau$ come from the same triangle, then the third edge of this block
is annihilated by another edge, denoted as $\delta$. Hence
$\delta,\eta,b,c,d$ come from the same block. This is impossible. If
$a,b$ come from the same triangle, then $c,d$ come from another
triangle, denoted as $\triangle_1$. Thus, $\tau,\gamma$ come from the
same block, which must be a triangle. Denote it as $\triangle_2$.
Notice that its third edge is annihilated since $x$ is connected
only to two of nodes \textbf{1,2,3,4}. But this is impossible
since node \textbf{3} is already black after gluing $\triangle_1$ to
$\triangle_2$. The similar argument shows that $a,c$
can not come from the same triangle. Thus, the graph is
undecomposable when $x$ is connected only to nodes \textbf{1,4}.\\\\
Assume that $x$ is connected to nodes \textbf{1,3} by edges
$\tau,\eta$ respectively. If $a,c$ come from the same triangle, then
$b,d$ come from another triangle, denoted as $\triangle_1$. Thus,
$\tau,\lambda$ come from another block. This block can not be a diamond since $\lambda$ must then
be the mid-edge and the boundary edge $\overline{x2}$ is
annihilated, which is impossible. Hence this block must be a
triangle, denoted as $\triangle_2$. Since $x$ is connected only to two of nodes $1,2,3,4$, the
third edge of $\triangle_2$ is annihilated. However, node \textbf{2}
is already black after gluing $\triangle_1$ to $\triangle_2$. This is
a contradiction. Therefore if $G$ is decomposable, $a,b$ must come
from the same triangle and $c,d$ come from another triangle. Apply
the corresponding replacement as in Figure \ref{twotrisimp}.\\\\
Assume $x$ is connected to at least three of nodes \textbf{1,2,3,4}.
Up to an index relabeling, there are two cases: Either $x$ is connected
to nodes \textbf{1,2,3} or
$x$ is connected to nodes \textbf{1,3,4}.\\\\
Suppose $x$ is connected to nodes \textbf{1,3,4} by $\tau,\rho,\eta$
respectively. Assume that $a,b$ come from the same triangle, then
$c,d$ come from one triangle too. Thus, $\eta,\overline{24}$ come
from the same block, which must be a triangle. Thus, node $x$ and
node \textbf{2} must be connected according to previous argument.
The argument is similar when $a$ and $c$ come from the same triangle. In both cases, the
neighborhood is a disjoint
connected component, see Figure \ref{bigdisjoint}. Otherwise, $G$ is undecomposable.\\\\
Assume $x$ is connected to nodes \textbf{1,2,3} by $\tau,\xi,\rho$
respectively. Assume that $a,b$ come from the same triangle, then
$c,d$ come from another triangle, denoted as $\triangle_1$.
Therefore, $\tau,\gamma,\rho$ must come from the same triangular block, denoted as
$\triangle_2$. Notice that node $x$ is black after gluing
$\triangle_1$ to $\triangle_2$, node $x$ and node \textbf{4} must be
connected by the third edge of $\triangle_2$. Similarly, assuming
that $a,c$ come from the same triangle will also result in the same
neighborhood. In this case, the neighborhood is a disjoint
connected component, see Figure \ref{bigdisjoint}. Otherwise, the
graph is undecomposable.
\begin{figure}
\centering
 \begin{minipage}[b]{0.24\linewidth}
 \centering
\scalebox{1.2} 
{
\begin{pspicture}(1,-1.0179597)(3.4477038,2.4073527)
\psline[linewidth=0.04cm,arrowsize=0.05291667cm 2.0,arrowlength=1.4,arrowinset=0.4]{<-}(1.5086414,0.7064153)(2.2886415,-0.0735847)
\psline[linewidth=0.04cm,arrowsize=0.05291667cm 2.0,arrowlength=1.4,arrowinset=0.4]{<-}(2.2886415,-0.0735847)(3.0886414,0.6864153)
\psline[linewidth=0.04cm,arrowsize=0.05291667cm 2.0,arrowlength=1.4,arrowinset=0.4]{<-}(2.2886415,-0.0735847)(3.0886414,-0.8735847)
\psline[linewidth=0.04cm,arrowsize=0.05291667cm 2.0,arrowlength=1.4,arrowinset=0.4]{<-}(1.4886414,-0.8935847)(2.2886415,-0.0935847)
\psline[linewidth=0.04cm,arrowsize=0.05291667cm 2.0,arrowlength=1.4,arrowinset=0.4]{<-}(3.0686414,0.7064153)(1.5086414,0.7064153)
\psline[linewidth=0.04cm,arrowsize=0.05291667cm 2.0,arrowlength=1.4,arrowinset=0.4]{<-}(3.0686414,-0.8935847)(1.5286413,-0.8935847)
\psdots[dotsize=0.12](2.2886415,-0.0935847)
\psdots[dotsize=0.12](3.0886414,0.7064153)
\psdots[dotsize=0.12](1.4686414,0.7064153)
\psdots[dotsize=0.12](1.4686414,-0.9135847)
\psdots[dotsize=0.12](3.0886414,-0.9135847)
\rput{-84.55601}(2.2112436,0.9161459){\psarc[linewidth=0.04,arrowsize=0.05291667cm 2.0,arrowlength=1.4,arrowinset=0.4]{<-}(1.6094258,-0.757927){1.4760472}{80.36318}{180.0}}
\rput{1.4405128}(-0.019636098,-0.07390145){\psarc[linewidth=0.04,arrowsize=0.05291667cm 2.0,arrowlength=1.4,arrowinset=0.4]{<-}(2.9294257,-0.817927){1.4760472}{80.36318}{180.0}}
\usefont{T1}{ptm}{m}{n}
\rput(2.289579,0.9164153){$\lambda$}
\usefont{T1}{ptm}{m}{n}
\rput(3.24286,-0.3235847){$\gamma$}
\psline[linewidth=0.04cm,arrowsize=0.05291667cm 2.0,arrowlength=1.4,arrowinset=0.4]{<-}(1.4286413,0.7064153)(2.2886415,2.1264153)
\psline[linewidth=0.04cm,arrowsize=0.05291667cm 2.0,arrowlength=1.4,arrowinset=0.4]{<-}(2.2686412,2.1464152)(3.1086414,0.7264153)
\psline[linewidth=0.04cm,arrowsize=0.05291667cm 2.0,arrowlength=1.4,arrowinset=0.4]{<-}(1.4486413,-0.8535847)(2.2686412,2.1064153)
\psline[linewidth=0.04cm,arrowsize=0.05291667cm 2.0,arrowlength=1.4,arrowinset=0.4]{<-}(2.2686412,2.1064153)(3.0686414,-0.8735847)
\usefont{T1}{ptm}{m}{n}
\rput(1.2955164,0.79641527){1}
\usefont{T1}{ptm}{m}{n}
\rput(3.267235,0.7764153){2}
\usefont{T1}{ptm}{m}{n}
\rput(3.2962976,-0.8635847){3}
\usefont{T1}{ptm}{m}{n}
\rput(1.3095789,-0.8435847){4}
\usefont{T1}{ptm}{m}{n}
\rput(2.27411,2.3164153){x}
\usefont{T1}{ptm}{m}{n}
\rput(1.7172351,1.5564153){$\tau$}
\usefont{T1}{ptm}{m}{n}
\rput(1.4914539,0.2564153){$\eta$}
\usefont{T1}{ptm}{m}{n}
\rput(2.8519225,1.5764153){$\xi$}
\usefont{T1}{ptm}{m}{n}
\rput(3.0612977,0.2964153){$\rho$}
\end{pspicture}
}
 \caption{}\label{bigdisjoint}
 \end{minipage}
\end{figure}
\\

\noindent Suppose that node \textbf{4} is connected only to one of
nodes \textbf{2,3}.\\
Assume that node \textbf{4} is connected to node \textbf{3}. In this case, if edge $\overline{34}$
is directed towards node \textbf{4}, then edges $c,d,\overline{34}$
can not form a triangular block. This means edges $b,d$, edges $a,c$ must come from two triangle, otherwise, the graph is undecomposable. Since by assumption, nodes \textbf{2,4} are
not connected, the corresponding edge is annihilated
by another edge, denoted as $\eta$. Hence, $\eta,\lambda$ come from
the same block which must be a triangle. So node \textbf{4} is
black before attaching edge $\overline{34}$. This is a contradiction.
In this
case, the graph is undecomposable.\\
If edge $\overline{34}$ is directed towards node \textbf{3}, there
are two possibilities: edges $c,d,\overline{34}$ form a triangular
block or edges $b,d$ come from a triangle $\triangle$. \\
Suppose it's the latter case, the edge of $\triangle$ that connects nodes
\textbf{2,4} is annihilated by another edge $\tau$. This forces edges
$\tau,\overline{34},\lambda$ to form a block. This is impossible. So
edges $c,d,\overline{34}$ form a
triangle. Otherwise, the graph is undecomposable. Hence, edges $c,d\overline{34}$ form a triangular block. We apply the similar
replacement as in Figure \ref{twotrisimp}. \\\\
Assume that nodes \textbf{3,4} and nodes \textbf{2,4} are
disconnected. Then one edge of the triangular block that contains edge
$b$ is annihilated by another edge, denoted as $\tau$. If $a,b$ come from the same triangle, then $\tau$
connects nodes \textbf{3,4}. Therefore, $\gamma,\tau$ must form a triangle. This means that nodes
\textbf{1,4} must be connected and nodes \textbf{2,3} are
disconnected. Similarly, if $\lambda,\tau$ must form a triangle, then $\tau$ connects nodes \textbf{2,4}, Thus, nodes \textbf{2,3} must
be disconnected and nodes \textbf{1,4} are connected.
(Figure \ref{CaseAB}). Notice that in Case A, node \textbf{2} may have degree larger than
two. And in Case B, node \textbf{3} may have degree larger than 2. Thus, it suffices to examine the degrees of
nodes \textbf{2,3} to determine whether edges $a,b$ or edges $a,c$ come from a
triangular block, then apply the corresponding replacement. To be more
precise, if degree of node \textbf{2} is at least 3, it is Case
A; if degree of node \textbf{3} is at least 3, it is Case B; if
both have degree 2, either decomposition is possible, and the neighborhood is a disjoint connected
component.

\begin{figure}[tb]
\centering
 \begin{minipage}[c]{0.5\linewidth}
 \centering
 \includegraphics[width=0.3\linewidth]{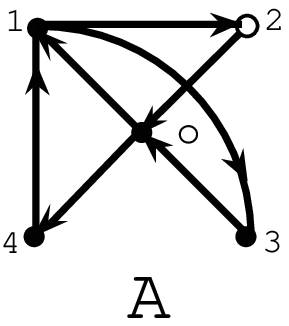}
 \includegraphics[width=0.3\linewidth]{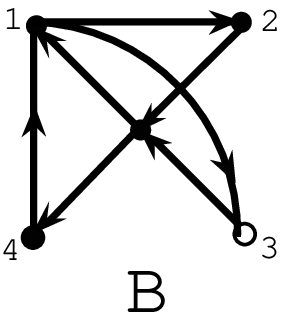}
 \caption{}\label{CaseAB}
 \end{minipage}
\end{figure}
\subsection{Node \textbf{1} is Connected to Node \textbf{2}, Disconnected from Node \textbf{3}}
Assume that nodes \textbf{1,2} are connected by edge $\lambda$ but
nodes \textbf{1,3} are not connected. If $\lambda$ is directed from
node \textbf{2} to node \textbf{1}, then $a,b,\lambda$ do not form a
triangular block. Moreover, $a$ does not come from a diamond. If $a$ comes from a spike,
$b,c,d$ must come from a diamond and node \textbf{4} is connected only to nodes \textbf{2,3}. Since by assumption,
nodes \textbf{1,2} are connected, node \textbf{1} must coincide with node \textbf{4}. This means nodes \textbf{1,3} are connected, and it
contradicts our assumption. Therefore,
$a$ must come from a triangle that does not contain $b,\lambda$. Hence the block must contain $c$. The third edge in that triangle is
annihilated by another edge, denoted
as $\tau$. Moreover, $\tau$ is directed from node \textbf{3} to node
\textbf{1}. Thus, $\tau,\lambda$ come from the same block. However,
there is no such block with such directions. Hence in
this case, the graph is undecomposable. \\\\
Assume that $\lambda$ is directed from node \textbf{1} to node
\textbf{2}.\\\\
If $a$ comes from a spike, $b,c,d$ come from the same block, which
must be a diamond. Therefore, nodes \textbf{1,2} must be disconnected. This means node \textbf{1} coincides with nodes \textbf{4}. Therefore nodes \textbf{1,3} are connected. Contradiction.\\
If $a$ comes from a diamond, there are two cases.
\begin{itemize}
\item \textbf{1,3} and nodes \textbf{3,4} are disconnected.
The diamond contains $b,c,\lambda$. Nodes \textbf{1,3} must be connected. Contradiction.
\item The diamond contains $b,d,\lambda$. Notice
that in this case, nodes \textbf{2,4} are connected, node
\end{itemize}
\begin{lma}
Assume that nodes \textbf{1,2} are connected by edge $\lambda$
directed from node \textbf{1} to node \textbf{2}, nodes \textbf{1,3} are disconnected, nodes \textbf{2,4}
are connected by an edge directed from node \textbf{4} to
\textbf{2}, the degrees of nodes \textbf{1,4} are two.
\begin{enumerate}
\item If $G$ is decomposable and nodes \textbf{2,3} is disconnected, then $a$ comes from a
diamond containing $a,b,d,\lambda$ and $c$ comes from a spike.
\item Assume nodes \textbf{2,3} are connected by an edge directed
from node \textbf{2} to \textbf{3}: \begin{enumerate}
\item If the degree of node \textbf{3} is two, then the neighborhood is a disjoint
connected component.
\item If the degree of node \textbf{3} is at least three, then the graph is not decomposable.
\end{enumerate}
\end{enumerate}
\end{lma}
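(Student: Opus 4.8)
The plan is to continue the block-by-block case analysis begun in the two paragraphs preceding the statement, classifying each decomposition according to which block the edge $a$ (the edge joining $o$ and \textbf{1}, directed out of $o$) belongs to, exactly as in the proofs of Lemmas \ref{decomdiaspike} and \ref{decomdiaspikeconnect}. I read off from Figure \ref{diaspike3plus} the directions I shall use: $a$ and $d$ point out of $o$ while $b$ and $c$ point into $o$, the edge $\lambda$ runs from \textbf{1} to \textbf{2}, and the edge $\overline{24}$ runs from \textbf{4} to \textbf{2}. The engine of every case is the hypothesis $\deg(\textbf{1})=\deg(\textbf{4})=2$: node \textbf{1} is incident only to $a,\lambda$ and node \textbf{4} only to $d,\overline{24}$, so any block that would attach a further edge to \textbf{1} or \textbf{4} is immediately excluded.

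For part 1 I would first discard the easy blocks for $a$. It cannot come from a fork, since a fork tip is a black node of degree $1$ while $\deg(\textbf{1})=2$; and it cannot come from a square, since that would make $o$ the central node with its four neighbours corners of degree $3$, contradicting $\deg(\textbf{1})=2$ (and $o$ cannot be a corner, whose degree is $3\ne4$). If $a$ were a spike, then $b,c,d$ would form a diamond with $o$ a white node; running through the three choices of mid-edge among $b,c,d$ forces either a boundary edge between \textbf{2} and \textbf{3} (impossible, as they are disconnected) or degree $3$ at \textbf{4} (impossible), so the spike case dies. If $a$ came from a triangle, then $\deg(\textbf{1})=2$ forces that triangle to be $\{a,b,\lambda\}$ on $o,\textbf{1},\textbf{2}$, after which $c,d$ must come from a single block in which $o$ is white of degree $2$ carrying one in- and one out-edge; the only such block is a triangle on $o,\textbf{3},\textbf{4}$, which demands an edge between \textbf{3} and \textbf{4} and hence $\deg(\textbf{4})\ge3$, a contradiction. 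This leaves $a$ coming from a diamond; since $\deg(\textbf{1})=2$ forbids $a$ from being a mid-edge, $a$ is a boundary edge with $o$ white, and tracking the two white and two black nodes under the constraints that \textbf{2},\textbf{3} are disconnected and $\deg(\textbf{4})=2$ pins the diamond down to $\{a,b,d,\lambda,\overline{24}\}$ with mid-edge $b$. The remaining edge $c$ then issues from a block in which $o$ is white of degree $1$, i.e. a spike, which is the assertion.

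For part 2 the extra edge $\mu$ from \textbf{2} to \textbf{3} raises $\deg(\textbf{2})$ to $4$ and now makes a boundary edge between \textbf{2} and \textbf{3} available. In case (a), where $\deg(\textbf{3})=2$, the degrees $4,2,4,2,2$ at $o,\textbf{1},\textbf{2},\textbf{3},\textbf{4}$ are all saturated by the listed edges, so no edge leaves the neighbourhood and it is automatically a complete connected component; it then suffices to exhibit a decomposition, namely the diamond $\{a,b,d,\lambda,\overline{24}\}$ of part 1 together with the two spikes $c$ and $\mu$, after which all five nodes become black, so the neighbourhood is a disjoint connected component. For case (b), $\deg(\textbf{3})\ge3$, I would rerun the analysis on $a$: the fork, square, and triangle cases are excluded exactly as in part 1, so $a$ comes from a spike or a diamond, and I would check that in every surviving decomposition \textbf{3} is a black node of degree $2$. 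In the spike case the only admissible diamond is $\{b,c,d,\mu,\overline{24}\}$, in which \textbf{3} is black; in the diamond case the diamond is either $\{a,b,d,\lambda,\overline{24}\}$ (so that $c,\mu$ are two spikes meeting at \textbf{3}, making it black) or $\{a,b,c,\lambda,\mu\}$ (in which \textbf{3} is a diamond black node). In all cases \textbf{3} is black, so the edge demanded by $\deg(\textbf{3})\ge3$ cannot attach and $G$ is not decomposable.

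The step I expect to be the main obstacle is the bookkeeping inside the spike and diamond cases: one must keep straight which two nodes are the white mid-edge ends and which are the two black nodes, and then repeatedly invoke the exact degree constraints — $\deg(\textbf{1})=\deg(\textbf{4})=2$ throughout, and the saturation of \textbf{2} at degree $4$ in part 2 — to kill precisely those configurations in which an edge between \textbf{2} and \textbf{3}, or between \textbf{3} and \textbf{4}, would be a boundary edge. Part 2(b) is the most delicate point, because it is an exhaustiveness statement: I must be sure that no decomposition leaves \textbf{3} as a white outlet, which comes down to verifying that every block that could raise \textbf{3} to degree $\ge3$ (a triangle or a diamond with \textbf{3} white) is blocked by forcing $\deg(\textbf{2})$ or $\deg(\textbf{4})$ above the allowed maximum of $4$.
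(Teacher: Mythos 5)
Your outline (classify the decomposition by the block containing $a$, then exploit the degree hypotheses) is the same as the paper's, but the execution has a genuine gap, and it is concentrated exactly where this paper's method lives: you twice assume that the third edge of a triangular block must be physically present in $G$, ignoring that it may be annihilated. In part 1, the hypothesis deg(\textbf{1})$\,=2$ does \emph{not} force the triangle through $a$ to be $\{a,\lambda,b\}$: the triangle's edge at node \textbf{1} can be annihilated, giving the subcase in which $a,c$ span a triangle on $o$, \textbf{1}, \textbf{3} with the edge $\overline{13}$ annihilated by some edge $\eta$; one must then chase the block of $\eta$, show it forms a triangle with $\lambda$, and conclude that nodes \textbf{2,3} would be connected --- the paper's contradiction. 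Likewise, inside your $\{a,\lambda,b\}$ subcase, the companion triangle on $o$, \textbf{3}, \textbf{4} does not ``demand an edge between \textbf{3} and \textbf{4}'': that third edge may be annihilated, so deg(\textbf{4})$\,\geq 3$ is not forced. The paper instead follows the annihilating edge $\eta$, shows $\eta$ and $\overline{24}$ must lie in a common triangle, and again derives that \textbf{2,3} are connected. Omitting these annihilation chains is not harmless bookkeeping: in part 2, where \textbf{2,3} \emph{are} connected by $\mu$, the chain closes up into a genuine decomposition by three triangles $\{a,\lambda,b\}$, $\{c,d,\overline{43}\}$, $\{\eta,\overline{42},\mu\}$ (with $\overline{43}$, $\eta$ annihilated) --- this is precisely how the paper obtains part 2(a). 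Consequently your claim in 2(b) that ``the fork, square, and triangle cases are excluded exactly as in part 1'' is false for the triangle, and your exhaustiveness argument for 2(b) misses the one family of decompositions that actually occurs; you would still need to check that node \textbf{3} is black there (it is, being a vertex of two glued triangles), but that verification is absent.

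Two further defects in 2(b): your alternative ``diamonds'' $\{b,c,d,\mu,\overline{24}\}$ and $\{a,b,c,\lambda,\mu\}$ are not valid diamond blocks. A diamond consists of two coherently directed triangles sharing the mid-edge; with $\mu$ directed from \textbf{2} to \textbf{3} and $c$ directed from \textbf{3} to $o$, the half on $o$, \textbf{2}, \textbf{3} fails to be a directed 3-cycle in both of your candidates. In fact the spike case for $a$ is impossible under the part-2 hypotheses: if $b,c,d$ come from one diamond, the edge directions force $o$ to be the white node with mid-edge $d$, so \textbf{4} is the other white node and must carry boundary edges to both \textbf{2} and \textbf{3}, contradicting deg(\textbf{4})$\,=2$ since boundary edges cannot be annihilated. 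Finally, blackness of \textbf{3} alone does not finish 2(b): a third edge at \textbf{3} could come from the \emph{same} block as $\mu$ (a triangle or diamond on \textbf{2}, \textbf{3}, and a new node), which blackness does not forbid. What kills it, as in the paper, is that any such block contributes a further edge at \textbf{2} and pushes its degree above the section's standing maximum of 4. You do name this mechanism in your closing paragraph, but it is never incorporated into the case analysis, so as written the proof of 2(b) does not go through.
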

\begin{proof} \textbf{1:} Suppose nodes \textbf{2,3} are disconnected and the
statement is false.\\
It is easy to rule out the possibility that $a$ comes from a square
or a fork. \\
If $a$ comes from a spike, $b,c,d$ comes from a diamond
and node \textbf{2} is black. Thus, $\lambda$ can not be attached unless node \textbf{1} coincide with node \textbf{4}.
Thus degree of node \textbf{1} coincide with node \textbf{4} is 4. This contradicts the assumption that degree of node \textbf{1} is 2.
Suppose edge $a$ comes from a diamond. Since the statement
is false, the diamond must contain $a,b,c$. Hence node \textbf{1} must be
connected to
node \textbf{3}. This ia a contradiction to our assumption.\\
If $a,b$ come from a triangle, $c,d$ also come from a triangle denoted by
$\triangle$. Thus, the third edge of $\triangle$ is annihilated by
another edge, denoted as $\eta$. Hence $\eta,\overline{24}$ come
from the same block, which must be a triangle. Thus, node
\textbf{2,3} must be connected. Contradiction.\\
If $a,c$ come from a triangle, then the third edge of this triangle
is annihilated by another edge, again denoted as $\eta$. Hence
$\eta,\lambda$ must come from the same block, which must be a
triangle. Hence nodes \textbf{2,3} must be connected.
Contradiction.\\\\
\textbf{2:} Assume nodes \textbf{2,3} are connected by an edge directed
from node \textbf{2} to \textbf{3}. It suffices to check the cases when $a$ comes from a
triangle or a diamond. In the first case, by previous
argument, all nodes in this neighborhood are already black. This proves $(a)$. In the second case, the degree of node \textbf{3} is three. The
only possibility to obtain a decomposition is to glue a triangle or diamond on nodes \textbf{2,3}.
In either case, the degree of node \textbf{2} is larger than 4. This contradicts the assumption of the section
that the degree of any node of $G$ is at most 4. Hence the graph is undecomposable. This proves $(b)$
\end{proof}
\noindent Suppose node \textbf{1} or node
\textbf{4} has degree at least three, then $a$ doesn't come
from a diamond. Moreover, we can rule out the possibility that $a$
comes from a square, since this will force node \textbf{1} to be connected
to node \textbf{3} with an edge \textbf{1}$\rightarrow$\textbf{3}.
Hence $a$ must come from a triangle. There are two cases. In case A,
edges $a,b$ are in the same triangle, thus, edges $c,d$ are in the
same triangle. In this case, apply the same replacement as in Figure
\ref{twotrisimp}. In case B, edges $a,c$ are in the same
triangle denoted as $\triangle$. Thus, edges $b,d$ are in the same
triangle. The edge in $\triangle$ that connects nodes
\textbf{1,3} is annihilated by an edge denoted as $\lambda$. Thus,
$\gamma,\lambda$ come from the same block, and it must be a
triangle. This means nodes \textbf{2,3} are connected. Moreover, in
Case B, nodes \textbf{2,4} must be connected by an edge directed from node \textbf{4} to \textbf{2}
and nodes \textbf{1,2,3} are black. Notice that nothing has been glued to node
\textbf{4} yet, we use this to identify case B.
\begin{lma}
Suppose that graph G is decomposable and node \textbf{1} is connected to node \textbf{2} by an oriented edge \textbf{1}$\rightarrow$\textbf{2}, nodes \textbf{1,3} are disconnected:
\begin{enumerate}
\item[a] Suppose nodes \textbf{2,4} and nodes \textbf{2,3} are connected. If degree of node \textbf{4}
is at least 3, then $a,c$ come from one triangle and $b,d$
come from another triangle. If degree of node \textbf{1} is at least 3, then $a,b$ come from the same triangle and $c,d$ come from
the same triangle. (Figure \ref{tridistinguishlma}). If the degrees of both nodes
\textbf{1} and \textbf{4} are 2, then the neighborhood is a disjoint connected
component.
\item[b] If nodes \textbf{2,4} or \textbf{2,3} are
disconnected, then $a,b$ come from one triangle, $c,d$ come from another triangle.
\end{enumerate}
\end{lma}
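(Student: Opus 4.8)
The plan is to reduce everything to a two-way decision. By the standing reduction recorded just before the statement, once the possibilities of $a,b,c,d$ all coming from one square, diamond, or spike have been excluded, any decomposition of a decomposable $G$ must split the four edges at $o$ into two triangular blocks. Since a triangular block is a directed $3$-cycle and $\lambda$ is oriented $\mathbf1\to\mathbf2$, compatibility forces the orientations $a=o\to\mathbf1$, $b=\mathbf2\to o$, $c=\mathbf3\to o$, $d=o\to\mathbf4$, and hence only two pairings survive: Configuration AB, whose triangles are $\{a,b\}$ on $\{o,\mathbf1,\mathbf2\}$ and $\{c,d\}$ on $\{o,\mathbf3,\mathbf4\}$, and Configuration AC, whose triangles are $\{a,c\}$ on $\{o,\mathbf1,\mathbf3\}$ and $\{b,d\}$ on $\{o,\mathbf2,\mathbf4\}$. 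The lemma is then the statement that the connectivity and degree data decide between these two.

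The core of the proof is a pair of dual rigidity facts that I would establish first. In Configuration AC the third edge of $\{a,c\}$ is $\overline{\mathbf1\mathbf3}$, which cannot be a genuine edge because $\mathbf1,\mathbf3$ are disconnected, so it must be annihilated; I claim the only annihilating block that leaves $\lambda$ still incident to $\mathbf1$ is a triangle on $\{\mathbf1,\mathbf2,\mathbf3\}$ carrying $\lambda$, an edge $\overline{\mathbf2\mathbf3}$, and the reverse copy of $\overline{\mathbf1\mathbf3}$. Granting this, node $\mathbf1$ becomes black with its $\overline{\mathbf1\mathbf3}$ edge deleted, so $\deg(\mathbf1)=2$ exactly, and the presence of $\overline{\mathbf2\mathbf3}$ is explained. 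Dually, in Configuration AB the triangle $\{c,d\}$ needs a third edge $\overline{\mathbf3\mathbf4}$, and recovering the prescribed edges $\overline{\mathbf2\mathbf3}$ and $\overline{\mathbf2\mathbf4}$ forces a triangle on $\{\mathbf2,\mathbf3,\mathbf4\}$ whose $\overline{\mathbf3\mathbf4}$ edge annihilates that of $\{c,d\}$; this pins $\mathbf4$ to be black with $\deg(\mathbf4)=2$, while $\mathbf1$ survives as a free white corner of $\{a,b\}$ and may carry further edges. In short, AC pins $\mathbf1$ and frees $\mathbf4$, whereas AB pins $\mathbf4$ and frees $\mathbf1$.

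Part (a) is then immediate. If $\deg(\mathbf4)\ge3$ then $\mathbf4$ is not pinned, ruling out AB and leaving AC, so $a,c$ and $b,d$ form the triangles; if $\deg(\mathbf1)\ge3$ then $\mathbf1$ is not pinned, ruling out AC and leaving AB; and if $\deg(\mathbf1)=\deg(\mathbf4)=2$ then both rigidity patterns are simultaneously realizable on the five nodes $\{o,\mathbf1,\mathbf2,\mathbf3,\mathbf4\}$ with no vertex able to accept further gluing, so the neighborhood is a disjoint connected component admitting both decompositions. For part (b) I would argue by obstruction: Configuration AC consumes a genuine edge $\overline{\mathbf2\mathbf4}$ as the third edge of $\{b,d\}$ and a genuine edge $\overline{\mathbf2\mathbf3}$ inside the triangle that annihilates $\overline{\mathbf1\mathbf3}$, and neither can be replaced by a further annihilation without forcing $\mathbf2$ black before the edges it must still carry are attached. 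Hence disconnecting $\mathbf2,\mathbf4$ or $\mathbf2,\mathbf3$ destroys AC and forces AB.

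The step I expect to be the main obstacle is the rigidity claim underlying Configuration AC — that the annihilation of $\overline{\mathbf1\mathbf3}$ must be carried out by the single triangle on $\{\mathbf1,\mathbf2,\mathbf3\}$ and cannot be outsourced to any other block or chain of blocks without either detaching $\lambda$ from the now-black node $\mathbf1$ or pushing some degree above $4$. This is the same annihilation-tracking that makes Lemma \ref{trianglespike} and Lemma \ref{decomdiaspikeconnect} delicate, and I would dispatch it the same way: assume an alternative annihilating block, follow the forced chain of triangles generated by each successive annihilation, and close the chain into a contradiction with the disconnectedness of $\mathbf1,\mathbf3$ or with the degree bound $\le4$.
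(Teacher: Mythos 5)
Your proposal is correct and follows essentially the same route as the paper: the paper's preceding case analysis establishes exactly your two rigidity facts (the $\{a,c\}$ pairing forces the annihilating triangle on $\{\mathbf{1},\mathbf{2},\mathbf{3}\}$ containing $\lambda$, making nodes \textbf{1,2,3} black with genuine edges $\overline{\mathbf{2}\mathbf{3}},\overline{\mathbf{2}\mathbf{4}}$ and $\deg(\mathbf{1})=2$, while the $\{a,b\}$ pairing forces a triangle on $\{\mathbf{2},\mathbf{3},\mathbf{4}\}$ annihilating $\overline{\mathbf{3}\mathbf{4}}$ and pinning $\deg(\mathbf{4})=2$), and the lemma is then decided by the degree and connectivity data just as you argue. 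The annihilation-tracking step you flag as the main obstacle is exactly the step the paper handles by its earlier block-by-block elimination, so your reconstruction matches the paper's proof in substance, merely spelling out the part the paper compresses into ``it suffices to prove part a.''
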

\begin{figure}[bt]
\centering
\includegraphics[width=0.2\linewidth]{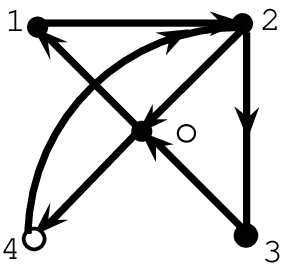}
\includegraphics[width=0.2\linewidth]{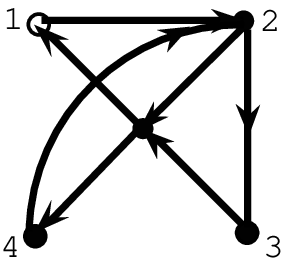}
\caption{Case B}\label{tridistinguishlma}
\end{figure}
\begin{proof}
According to the previous argument, it suffices to prove part $a$.
Suppose nodes \textbf{2,4} and nodes \textbf{2,3} are connected. If
$a,b$ come from one triangle, then $c,d$ come from another
triangle. Moreover, edges $\overline{24},\overline{23}$ must come
from the same block, which must be a triangle. The third edge of
this triangle annihilates $\overline{34}$. Therefore, node
\textbf{4} have degree 2. Thus, if the degree of node \textbf{4} is
at least 3, $a,c$ must come from one triangle. Otherwise,
the graph is undecomposable. The rest of part $a$ follows from the
previous argument.
\end{proof}
\subsection{Node \textbf{1} is Disconnected from Nodes \textbf{2,3}}
Assume that neither nodes \textbf{1,2} nor nodes \textbf{1,3} are connected. Without loss of generality, we can assume that neither nodes
\textbf{3,4} nor nodes \textbf{2,4} are connected. Otherwise, we
can relabel the indices of boundary nodes and apply the previous
argument.\\\\
Assume that $a,b$ come from the same triangle. Then the third edge
of it is annihilated by another edge, denoted as $\lambda$.
This edge $\lambda$ can be a part of a spike, a triangle, or the mid-edge of a
diamond block. If it comes from a triangle or a diamond, nodes $1,2$
must both be connected to
another node $x$.\\\\
Conversely, it is possible to determine whether $a,b$ or $a,c$ come
from the same triangle by considering nodes connecting some of nodes
\textbf{1,2,3,4}. \\\\
Suppose none of nodes \textbf{1,2,3,4} is connected to any other node
except for $o$, then the neighborhood is a disjoint connected
component.\\\\
Assume that nodes \textbf{1,4} or \textbf{2,3} are both connected to
the same node $x$. We assume that vertex $x$ is distinct from nodes
 \textbf{1,2,3,4}, and $o$. Without loss of generality, assume
nodes \textbf{1,4} are connected to $x$. Denote $\alpha=\overline{1x}$ and
$\beta=\overline{4x}$. If $a,d$ come from the same block, it must be
a diamond. Therefore, $b,c$ come from another diamond. Since degree
of $o$ is 4, the mid-edges of these two diamonds annihilate each other. Then
the neighborhood is a disjoint connected component, see Figure
\ref{doublediamond2}. If we rule out this case, $a,d$
must come from two blocks. By the previous argument, neither $a$ nor
$d$ comes from the a diamond. Thus, they must come from triangular blocks. Moreover, the triangle that contains $a$ must contain $c$ or
$b$. Without loss of generality, assume $c$ is contained in such triangle $\triangle$. Then the third side of
$\triangle$ is annihilated by another edge, denoted as $\tau$.
Similarly, $b,d$ come from another triangle $\triangle_1$. The third edge of $\triangle_1$ is
annihilated by an edge denoted as $\eta$. Then $\alpha,\tau$ come
from the same block, which must be a triangle. If node \textbf{3} and
$x$ are not connected, the third edge of the triangle
containing $\alpha,\tau$ must be annihilated. This is impossible since
node \textbf{3} is already black. Therefore, in this case, the graph is undecomposable. If nodes \textbf{3} and $x$ are connected by an edge
denoted as $\gamma$, then $\alpha,\tau,\gamma$ form a triangle if
their directions match a triangle, otherwise, the graph is undecomposable. Notice that $\beta,\eta$ must come from the same
block. Thus, it must be a triangle if their directions match, otherwise, the graph is undecomposable. In this case,
nodes $x$,\textbf{2} must be connected, and the neighborhood is a
disjoint connected component, see Figure \ref{twist}. In
this case, there is an alternative decomposition , see Figure
\ref{twistdecom}.\\
\begin{figure}[bt]
\centering
 \begin{minipage}[b]{0.3\linewidth}
 \centering
 \includegraphics[width=0.8\linewidth]{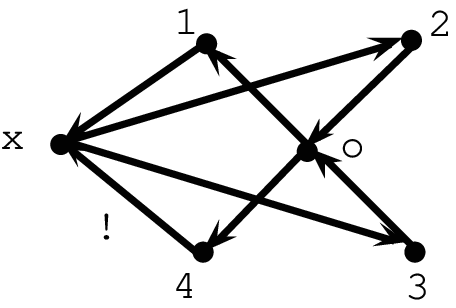}
 \caption{}\label{twist}
 \end{minipage}
 \begin{minipage}[b]{0.5\linewidth}
 \centering
 \includegraphics[width=0.6\linewidth]{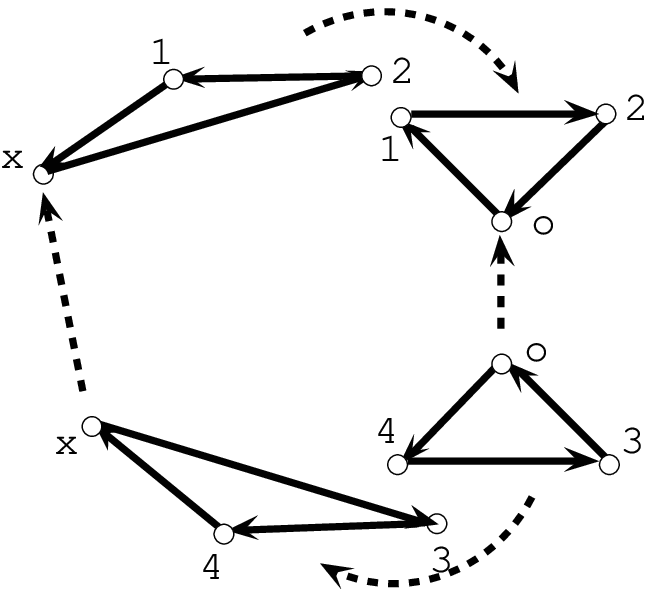}
 \caption{}\label{twistdecom}
 \end{minipage}\\
\end{figure}

\noindent Suppose $x$ is connected to nodes \{1,2\} (resp. \{1,3\},\{3,4\},\{2,4\}),
we claim that $a,b$ (resp. $a,c$, $c,d$, $b,d$) come from one triangle, therefore $c,d$ (resp. $b,d$,
$a,b$, $a,c$) must come from another
triangle.\\\\
Assume that nodes \textbf{1,2} are connected to node $x$ and
suppose $a,b$ do not come from the same triangle in any of the
possible decomposition of $G$. Denote $\alpha=\overline{1x}$,
$\beta=\overline{2x}$. Notice that from the previous argument, $a$
does not come from a spike, a fork, a diamond or a square. So it
must be contained in a triangular block $\triangle$. If $\triangle$
contains $\alpha$, then the third edge is annihilated by another
edge, denoted as $\tau$. Hence, $\tau,b,c,d$ come from the same
block which must be a square, also nodes $o,x$ must be colored white in that
block. This is impossible. Thus, $\triangle$ does not contain
$\alpha$. So it must contain $c$. Then the third edge must also be
annihilated by another edge, again denoted as $\tau$. In this case,
$\tau,\alpha$ must come from the same block, which must be a triangle
$\triangle_1$. If nodes \textbf{3} and $x$ are connected, we will get a neighborhood similar as the one in Figure \ref{twist}. As we already know, there is an alternative decomposition in which $a,b$ come from the same
triangle. If nodes \textbf{3} and $x$ are not
connected, then the third edge of $\triangle_1$ is annihilated by
another edge. However, node \textbf{3} is already black after gluing
$\triangle_1$.  This means the third edge of $\triangle_1$ can not
be annihilated. This is a contradiction. Therefore, $a,b$ come from a
same triangle in a decomposition of $G$. Otherwise, $G$ is not
decomposable.
\begin{rmk}
If $x$ is connected to nodes \textbf{1,2} and the graph is decomposable, except for one case when the neighborhood is a disjoint connected component,
there is a unique decomposition in which $a,b$ comes from a triangular block and $c,d$ comes from another.
\end{rmk}
\subsection{Summary}
All possible neighborhoods of nodes with degree 4 are listed in
Figure \ref{tree}.
\begin{figure}
\centering
\includegraphics[width=1\linewidth]{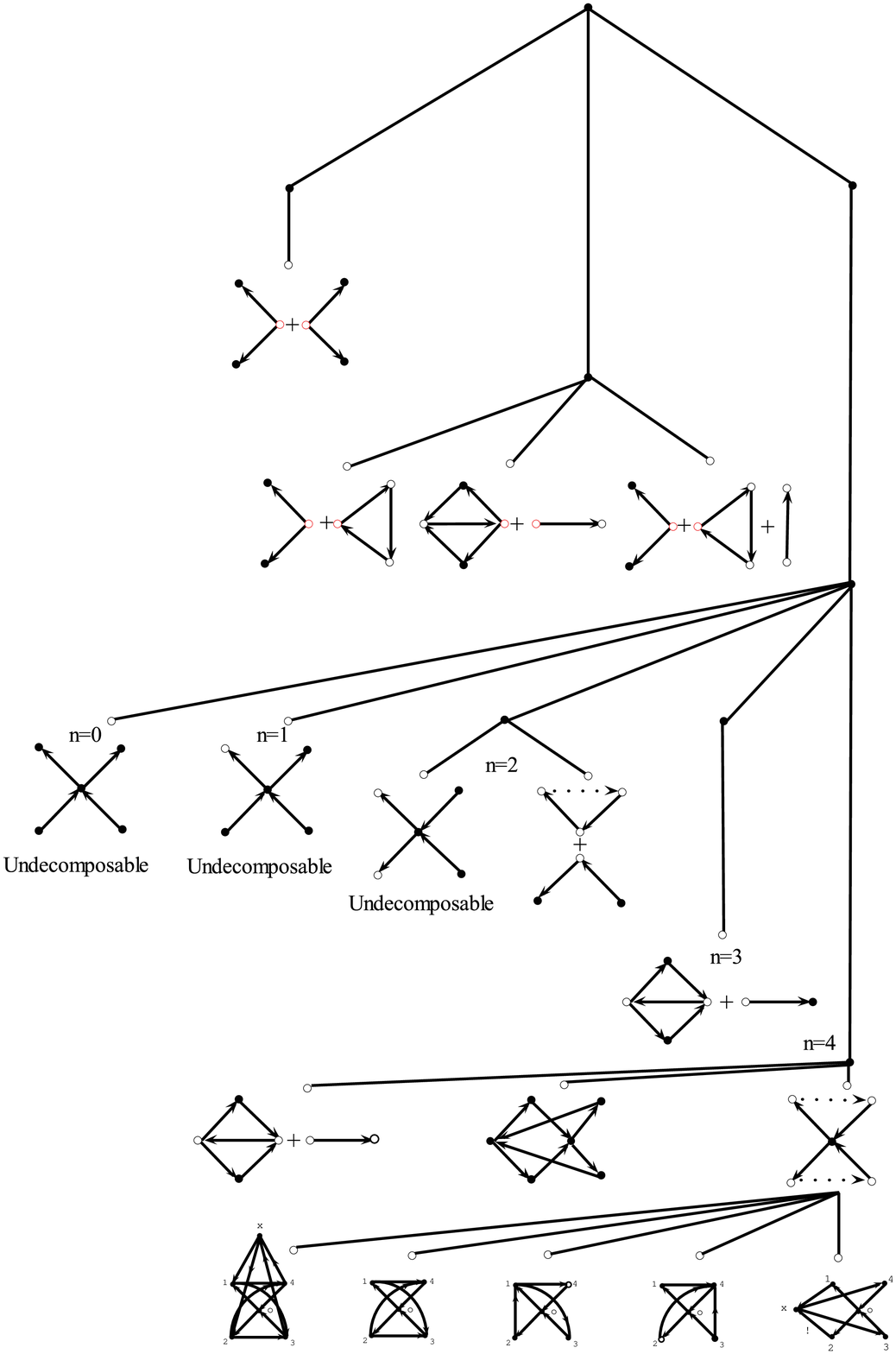}
\caption{Neighborhoods of Nodes with Degree 4}\label{tree}
\end{figure}
\section{Simplification on Nodes of Degree 3}\label{deg3node}
Assume the neighborhoods of nodes of degree at least 4 are all
simplified, and every node in the graph has degree at most 3.
We focus on the nodes of degree $3$.\\\\
\begin{figure}[hbtc]
 \centering
 \begin{minipage}[c]{0.2\linewidth}
 \centering
 \includegraphics[width=0.9\linewidth]{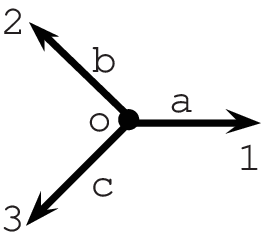}
 \caption{}\label{3out}
 \end{minipage}
 \begin{minipage}[c]{0.5\linewidth}
 \centering
 \includegraphics[width=0.3\linewidth]{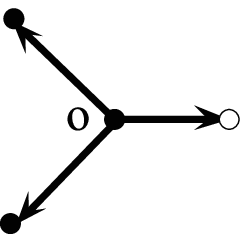}\raisebox{3em}{ =}
 \raisebox{-0.5em}{\includegraphics[width=0.2\linewidth]{outfork.eps}}\raisebox{3em}{ +}
 \raisebox{4em}{\includegraphics[angle=-90,width=0.3\linewidth]{spike.eps}}
 \caption{}\label{forkspike}
 \end{minipage}
 \begin{minipage}[c]{0.2\linewidth}
 \centering
 \includegraphics[width=0.8\linewidth]{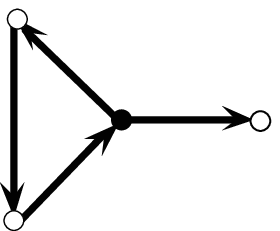}
 \caption{}\label{forkspikesub}
 \end{minipage}
\end{figure}

\subsection{All edges have the same direction.}
Without loss of generality, assume that they are all directed outwards.
(Figure \ref{3out}) Suppose one of them (denoted by $a$) comes from a triangle.
Since deg($o$)=3, and neither of the rest two edges comes from the same
triangle, the incoming edge incident to $o$ in the
same triangle must be annihilated. Denote this edge as $e$. Then $e$ must be annihilated by
an outward edge from another block. This block must contain the
remaining outward edges $b$ and $c$. But there is no such block.
This is a contradiction. Hence edge $a$ is not contained in a triangular block.
By symmetry, none of the three edges comes
from a triangle\\\\
If one of them comes from a fork, one of the remaining two edges
must also belong to the same fork. Thus, the third edge must come
from a single spike. (Figure \ref{forkspike}) Otherwise, the graph is undecomposable.
Replace the neighborhood with Figure \ref{forkspikesub}. By Lemma \ref{trianglespike}, this
replacement is reversible.\\
\begin{rmk}
In order to apply the replacement, we need to identify which two edges come from a fork. This can be done by checking the degrees of boundary nodes. If one of the boundary nodes has degree more than $1$, the corresponding edge must come from a spike and the remaining two edges form a fork. If all boundary nodes have degree $1$, we have a disjoint connected component and the decomposition is non-unique. If at least two of the boundary nodes have degrees more than $1$, the graph is undecomposable.
\end{rmk}
If one of the edges, denoted by $a$, comes from a diamond, denoted as $\diamondsuit$, then one of
remaining, denoted by $b$, must come from the same diamond. Thus the third edge $c$ is not contained in the same block. Since the degree of
$o$ is 3, the mid-edge in $\diamondsuit$ must be annihilated by
another edge directed away from $o$, denoted as $e$. Thus, $c,e$
come from the same block. Since the degree of $o$ is 3, the block
containing $c,e$ must be a triangle. However the directions of these
two edges do not match a triangle. This is a contradiction.\\\\
To conclude, if all three edges incident to a node are all directed inwards or outwards and the graph is decomposable, the neighborhood must be obtained from
gluing a fork and a spike.
\subsection{Two outward edges $+$ One inward edge}
See Figure \ref{2o1in}. Assume edge $a$ is directed inwards with endpoints nodes \textbf{1} and $o$. If $a$ comes from a spike, the
remaining two edges must come from a fork. If $a$ comes from a diamond $\diamondsuit$, the block
must contain at least $b$ or $c$ since only the mid-edge can be annihilated in a diamond. Assume $b$ is contained in this block.
The directions of $a,b$ force $c$ to be contained in $\diamondsuit$,
as shown in Figure \ref{sd}. Since all nodes in $G$ has degree at most 3, this diamond must be a disjoint connected
component. Otherwise, $G$ is undecomposable.\\\\
Assume $a$ comes from a triangle $\triangle$, there are two
cases:\\
\begin{figure}[tcb]
 \centering
 \begin{minipage}[b]{0.3\linewidth}
 \centering
 \includegraphics[width=0.6\linewidth]{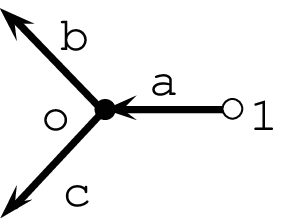}
 \caption{}\label{2o1in}
 \end{minipage}
 \begin{minipage}[b]{0.3\linewidth}
 \centering
 \includegraphics[width=0.6\linewidth]{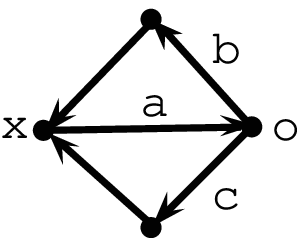}
 \caption{}\label{sd}
 \end{minipage}
 \begin{minipage}[b]{0.3\linewidth}
 \centering
 \includegraphics[width=0.6\linewidth]{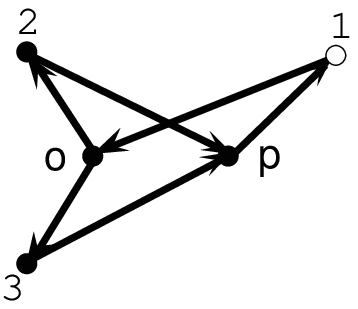}
 \caption{}\label{diamondtriangle}
 \end{minipage}
\end{figure}
\noindent\textbf{Case 1:} $\triangle$ contains neither of $b,c$. Then $b,c$ must
come from a diamond or a fork. In the latter, the
remaining edge of $\triangle$ that is incident to $o$ must be
annihilated. This forces $o$ to be a black node even before $b,c$
are attached. This is a contradiction. Therefore $b,c$ come from a diamond
$\diamondsuit$. Notice that the mid-edge in $\diamondsuit$ should be
annihilated by an edge of $\triangle$, as shown in Figure
\ref{diamondtriangle}. Simplify the neighborhood by removing the diamond block and leaving the triangle $\triangle$ containing $a$. (See Figure \ref{dtdecom}.)\\
\textbf{Case 2:} $\triangle$ contains one of $b,c$. Without loss of
generality, assume it is $b$. Then $c$ must come from a spike.\\
\begin{figure}[btch]
\centering
\begin{minipage}{0.6\linewidth}
\centering
\includegraphics[width=0.7\linewidth]{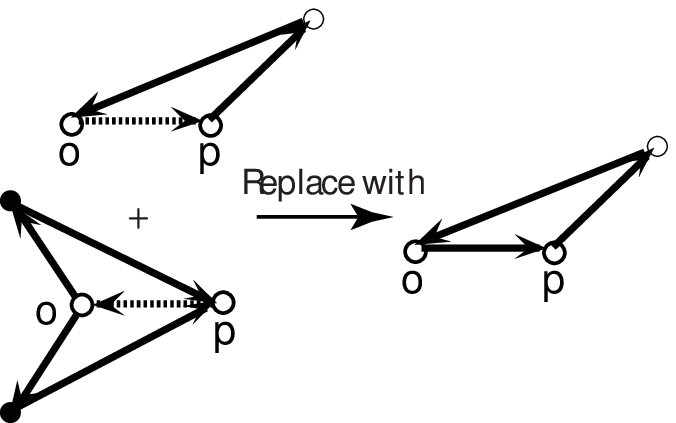}
\caption{}\label{dtdecom}
\end{minipage}
\end{figure}
Let's take deeper look at Case 2. Assume that the third
edge of $\triangle$
is $d$. There are two possibilities.\\
\vspace{-1em}
\begin{enumerate}
\item[\textbf{a.}] Edge $d$ is annihilated in the graph.
\item[\textbf{b.}] Edge $d$ is not annihilated in the graph.(Figure \ref{case12})
\end{enumerate}
Next, start with case \textbf{a}. There are three ways to annihilate $d$.\\\\
\noindent\textbf{Case a1} Edge $d$ is annihilated by a single
spike. See Figure \ref{case111}. Then replace this neighborhood by
the one in Figure \ref{case111sub}. By Lemma \ref{trianglespike}, this replacement is reversible.\\
\begin{figure}[btch]
 \centering
 \begin{minipage}[b]{0.3\linewidth}
 \centering
 \includegraphics[width=0.6\linewidth]{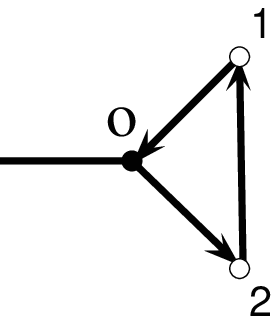}
 \caption{}\label{case12}
 \end{minipage}
 \begin{minipage}[b]{0.3\linewidth}
 \centering
 \includegraphics[width=0.6\linewidth]{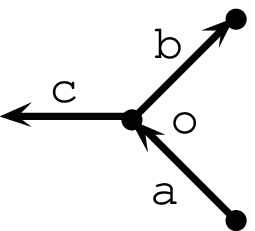}
 \caption{}\label{case111}
 \end{minipage}
 \begin{minipage}[b]{0.3\linewidth}
 \centering
 \includegraphics[width=0.6\linewidth]{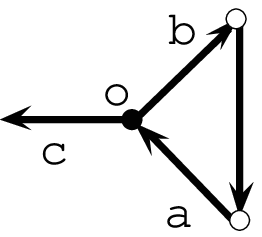}
 \caption{}\label{case111sub}
 \end{minipage}
\end{figure}

\noindent\textbf{Case a2} Edge $d$ is annihilated by
one edge of a triangle. See Figure \ref{case112}. If $p$ is connected to
$o$ via edge $c$, then this graph forms a disjoint
connected component. (Figure \ref{case112deg3blackatp}) Otherwise,
$G$ is undecomposable. If $c$ does not connect $p$, and there is
nothing else connected to $p$ (deg($p$)=2). Then we replace the neighborhood with the one in Figure \ref{case111sub}
If $c$ does
not connect $p$, and deg($p$)=3, as shown in Figure
\ref{case112deg3atp}. There are two cases:\\
\begin{itemize}
\item In Figure \ref{case112deg3atp}, suppose node \textbf{3} coincides with node $x$,
edge $\overline{px}$ is directed from $x$ to $p$ and deg($1$)=2, the neighborhood in Figure \ref{case112deg3atp} coincides with Figure \ref{diamondtriangle}. In this case, if graph $G$ is decomposable, the neighborhood is a disjoint connected component.
\item Suppose node \textbf{3} is not coincide with node $x$,  Then the neighborhood can replaced by Figure \ref{doublediamondsimp}. It's similar if edge $\overline{px}$ is directed from $p$ to $x$. This replacement it reversible by previous lemma.\\
\end{itemize}
\begin{figure}[btch]
 \centering
 \begin{minipage}[c]{0.3\linewidth}
 \centering
 \includegraphics[width=0.8\linewidth]{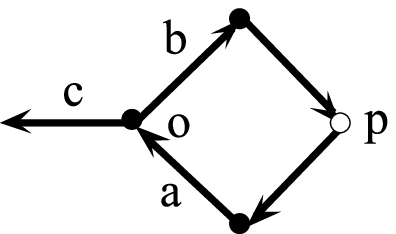}
 \caption{}\label{case112}
 \end{minipage}
 \begin{minipage}[c]{0.3\linewidth}
 \centering
 \includegraphics[width=0.7\linewidth]{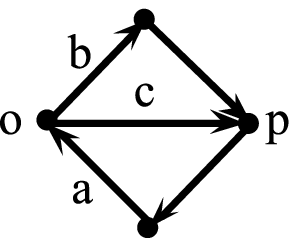}
 \caption{}\label{case112deg3blackatp}
 \end{minipage}
 \\
 \vspace{0.5em}
 \begin{minipage}[c]{0.4\linewidth}
 \centering
 \includegraphics[width=0.8\linewidth]{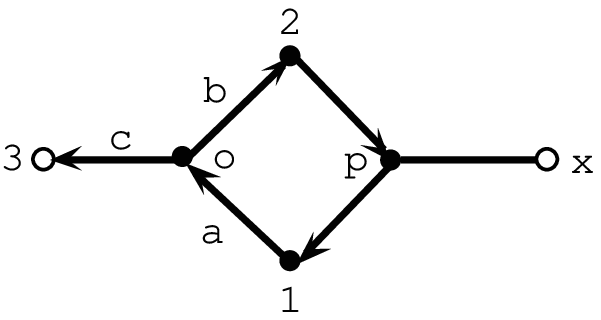}
 \caption{}\label{case112deg3atp}
 \end{minipage}
 \begin{minipage}[c]{0.4\linewidth}
 \centering
 \includegraphics[width=0.8\linewidth]{doublediamondsimp.eps}
 \vspace{1.5em}
 \caption{}\label{doublediamondsimp}
 \end{minipage}
\end{figure}

\begin{figure}[btch]
 \centering
\begin{minipage}[b]{0.4\linewidth}
\centering
 \includegraphics[width=0.6\linewidth]{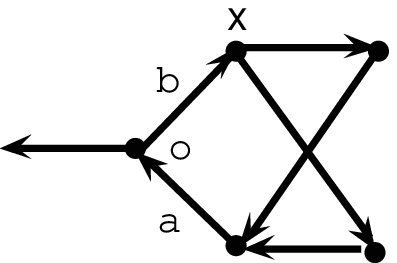}
 \caption{}\label{trianglediamond}
 \end{minipage}
\end{figure}
\noindent \textbf{Case a3} Assume $d$ is annihilated by the mid-edge of a
diamond, see Figure \ref{trianglediamond}.
Replace the neighborhood by the one in Figure \ref{case111sub} as well.\\\\
Next, let's discuss case \textbf{b}. If $d$ is not annihilated, there are three subcases:
\begin{itemize}
\item[\textbf{b1}] Both nodes \textbf{1,2} have degree two. In this case, the neighborhood must come from a spike and a triangle by Lemma~ ~\ref{trianglespike}.
\item[\textbf{b2}] One of the nodes \textbf{1,2} has degree two and the other one has degree three. Assume the degree of node \textbf{2} is two,
and the degree of node \textbf{1} is three. In this case, the neighborhood is shown in Figure \ref{case12b}.
\begin{figure}[btch]
\centering
\begin{minipage}[b]{0.5\linewidth}
\centering
\includegraphics[width=0.7\linewidth]{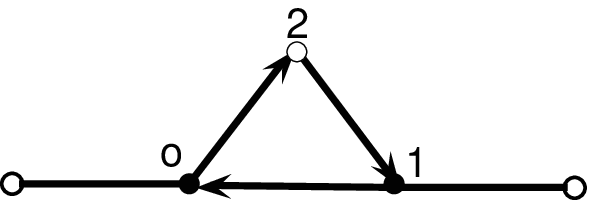}
\end{minipage}
\begin{minipage}[b]{0.3\linewidth}
\centering
\includegraphics[width=0.45\linewidth]{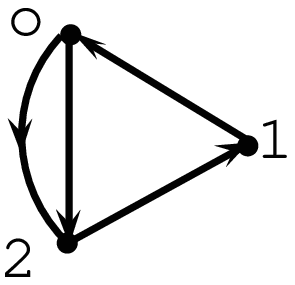}
\end{minipage}
\caption{}\label{case12b}
\end{figure}
\item[\textbf{b3}] Both nodes \textbf{1,2} have degrees three. In this case, we count the number of nodes that are connected to nodes $o$ and \textbf{1,2}, denoted as $n$.
    \begin{itemize}
    \item $n=3$, the only possible decomposable situation is Figure \ref{n3}.
    \item Suppose $n=2$. One of the exterior nodes is connected to two of nodes $o$,\textbf{1,2}. Denote this node by $x$. If $x$ is connected to nodes \textbf{1,2} (resp. $o$,\textbf{1} or $o$, \textbf{2}), then the other exterior node is connected to nodes \textbf{o} (resp. node \textbf{2} or node \textbf{1}). In this case, edges $\overline{x1}, \overline{x2}$ (resp. $\overline{xo}, \overline{x1}$ or $\overline{xo}, \overline{x2}$ ) come from two spikes and degree of $x$ must be two. (See Figure \ref{n2})
\begin{figure}[btch]
\centering
\begin{minipage}[b]{0.7\linewidth}
\centering
\includegraphics[width=0.3\linewidth]{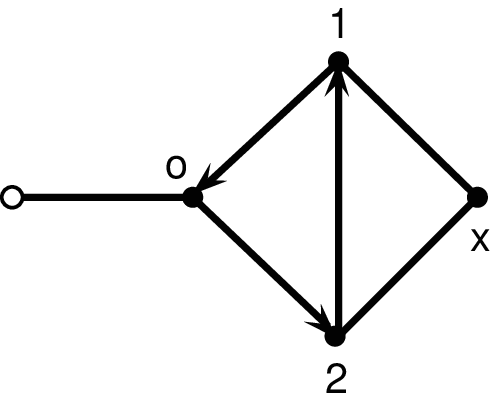}
\includegraphics[width=0.3\linewidth]{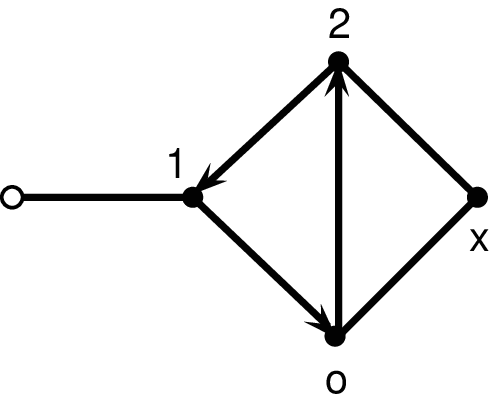}
\includegraphics[width=0.3\linewidth]{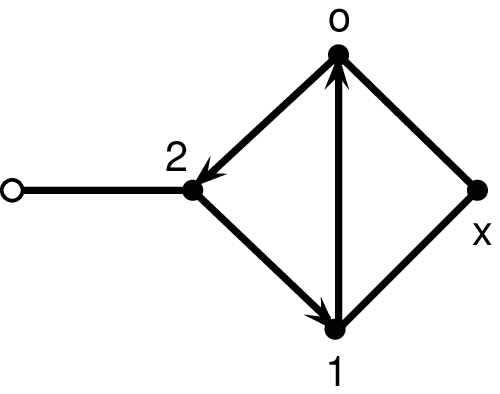}
\caption{}\label{n2}
\end{minipage}
\end{figure}
    \item $n=1$. Notice that we assume that the degree of nodes $o$,\textbf{1,2} are all three. So there are two cases, as shown in Figure \ref{1ext}. Note that Figure \ref{1ext}A is undecomposable, so the only decomposable neighborhood is Figure \ref{1ext}B.
\begin{figure}[btch]
\centering
\begin{minipage}[b]{0.4\linewidth}
\centering
\includegraphics[width=0.4\linewidth]{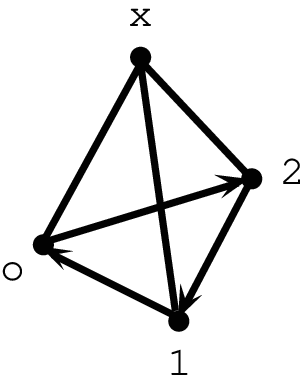}\\A
\end{minipage}
\begin{minipage}[b]{0.4\linewidth}
\centering
\includegraphics[width=0.5\linewidth]{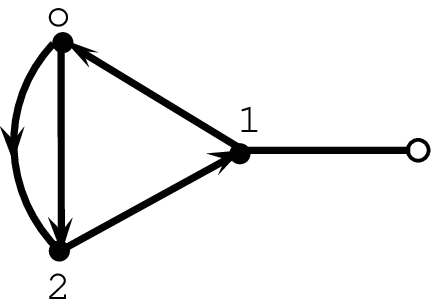}\\B
\end{minipage}
\caption{}\label{1ext}
\end{figure}
\end{itemize}
\end{itemize}
\begin{figure}[tb]
\centering
\begin{minipage}[b]{0.5\linewidth}
\centering
\includegraphics[width=0.5\linewidth]{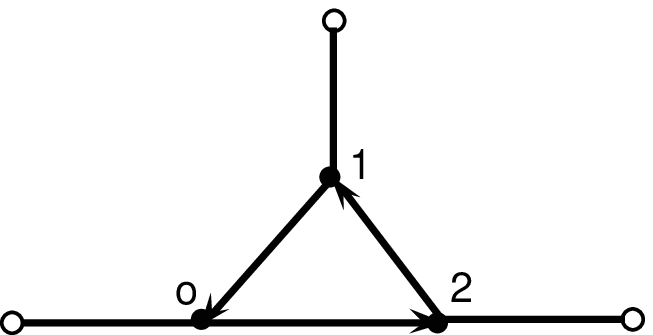}
\end{minipage}
\caption{}\label{n3}
\end{figure}
To sum up:\\
\textbf{1.} Every node in $G$ has degree at most 3.\\
\textbf{2.} Consider all nodes of degree 3. If all of them fall into
the decomposable categories, (Figure \ref{decomposable cases}) then either the neighborhood form disjoint connected component that can be easily decomposed, or we can apply corresponding replacement. If graph $G$ contains any neighborhood (up to a direction reversion on edges) that is unlisted in Figure \ref{decomposable cases}, the graph is not decomposable.\\
\begin{rmk}
We can reverse the directions of all edges to get another 14 neighborhoods in decomposable graph.
\end{rmk}
\begin{figure}[htbp]
\centering
  \begin{minipage}[b]{0.3\linewidth}
  \centering
  \hspace{-1.5em}\includegraphics[width=0.5\linewidth]{forkspike.eps}\\ \textbf{1}
  \end{minipage}
  \begin{minipage}[b]{0.3\linewidth}
  \centering
  \includegraphics[width=0.5\linewidth]{separateddiamond.eps}\\ \textbf{2}
  \end{minipage}
  \begin{minipage}[b]{0.3\linewidth}
  \centering
  \includegraphics[width=0.5\linewidth]{case1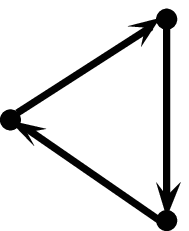}\\ \textbf{3}
  \end{minipage}\\
  \begin{minipage}[b]{0.3\linewidth}
  \centering
  \includegraphics[width=0.6\linewidth]{diamondtriangle.eps}\\ \textbf{4}
  \end{minipage}
  \begin{minipage}[b]{0.3\linewidth}
  \centering
  \includegraphics[width=0.6\linewidth]{trianglediamond.eps}\\ \textbf{5}
  \end{minipage}
  \begin{minipage}[b]{0.3\linewidth}
  \centering
  \includegraphics[width=0.7\linewidth]{case112deg3blackatp.eps}\\ \textbf{6}
  \end{minipage}\\
  \begin{minipage}[b]{0.3\linewidth}
  \centering
  \includegraphics[width=0.9\linewidth]{case112deg3atp.eps}\\ \textbf{7}
  \end{minipage}%
  \begin{minipage}[b]{0.3\linewidth}
  \centering
  \includegraphics[width=0.7\linewidth]{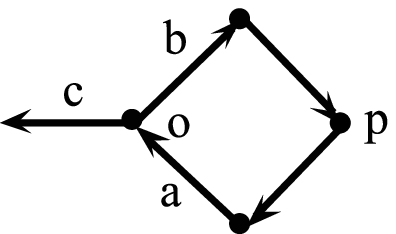}\\ \textbf{8}
  \end{minipage}
  \begin{minipage}[b]{0.3\linewidth}
  \centering
  \includegraphics[width=0.5\linewidth]{case12b2_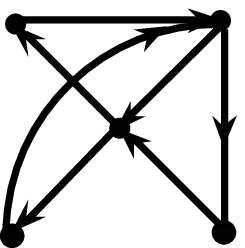}\\ \textbf{9}
  \end{minipage}\\
  \begin{minipage}[b]{0.3\linewidth}
  \centering
  \includegraphics[width=0.7\linewidth]{o2overlap.eps}\\ \textbf{10}
  \end{minipage}
  \begin{minipage}[b]{0.3\linewidth}
  \centering
  \includegraphics[width=0.8\linewidth]{case12b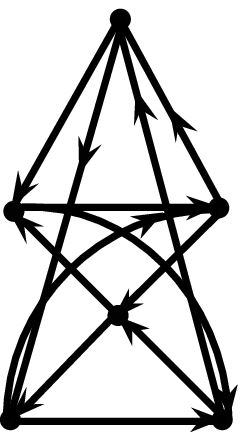}\vspace{0.5em}\\ \textbf{11}
  \end{minipage}
  \begin{minipage}[b]{0.3\linewidth}
  \centering
  \includegraphics[width=0.8\linewidth]{case12b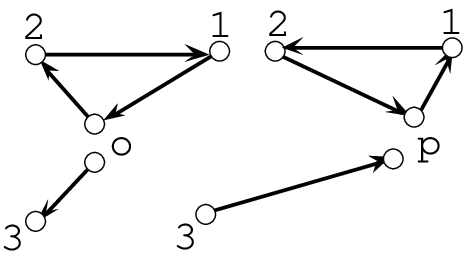}\vspace{0.5em}\\ \textbf{12}
  \end{minipage}\\
  \begin{minipage}[b]{0.6\linewidth}
  \centering
  \includegraphics[width=0.3\linewidth]{case12b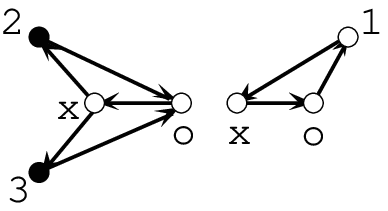}
  \includegraphics[width=0.3\linewidth]{case12b3_1r.eps}
  \includegraphics[width=0.3\linewidth]{case12b3_1rr.eps}\\ \textbf{13}
  \end{minipage}
  \begin{minipage}[b]{0.3\linewidth}
  \centering
  \includegraphics[width=0.55\linewidth]{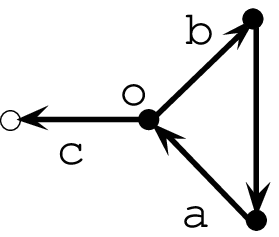}\\ \textbf{14}
  \end{minipage}\\
\caption{All decomposable cases for degree 3}\label{decomposable cases}
\end{figure}
\begin{rmk}Note that the degree of node
$o$ is not increased in any replacement.
\end{rmk}
In some of the above cases, the neighborhood of target node $o$ contains some other nodes of degree $3$. The algorithm covers the analysis of the neighborhood of these nodes in the following manner: \\
\begin{itemize}
\item For neighborhood 2 in Figure \ref{decomposable cases}, node $x$ has degree 3. The neighborhood of node $x$ is considered in the case derived from reversing the direction of edges in Figure \ref{decomposable cases}.2. Similarly, the neighborhood of node $p$ in neighborhood 4, 6 and that of node \textbf{2} in 9 and 10 is covered by reversing the directions of the edges in corresponding pictures.
\item The neighborhood of node $x$ in picture 5 of Figure \ref{decomposable cases} is covered by the one in picture 4. To be more specific, the neighborhood of $x$ in picture 5 is the neighborhood of $o$ in picture 4.
\item Picture 9 is a part of picture 10. Note the replacement for picture 9 is the same as the replacement in 10. Therefore, the order of replacement does not affect the result of the algorithm.
\item For node $1$ in picture 13, its neighborhood is the same as the one of node $o$ in picture 12. Since we don't apply any replacement for the neighborhood in picture 12, the order of examining nodes \textbf{1} and $o$ won't affect the result of algorithm. Similarly for node $1$ in picture 11.
\end{itemize}

\subsection{Identify the Decomposition}
In the previous section, we found all possible neighborhoods of nodes $o$ of degree 3 in a decomposable graph. In this section, we want to identify the neighborhood by checking two things:
\begin{itemize}
\item The number of nodes (other than nodes \textbf{1,2,3} and $o$) that are connected to
some of nodes \textbf{1,2,3}. Denote the number of such nodes by $n$. 
\item The direction of edges connecting $o$, its boundary nodes and other nodes that are connected to nodes \textbf{1,2,3}
\end{itemize}
If all three edges incident to $o$ have the same direction, the only possible neighborhood in a decomposable graph comes from
gluing a fork to a spike. Moreover, $n$ is $0$ or $1$. Suppose $n=1$, there is only node that differs from $o$ and is connected to nodes \textbf{1,2,3}. Denote it by $x$. If $x$ is connected to node \textbf{1} (resp. \textbf{2} or \textbf{3}), then edge $\overline{o1}$ (resp. $\overline{o2}$ or $\overline{o3}$)comes
from a spike and edges $\overline{o2},\overline{o3}$ (resp. $\overline{o1}$,$\overline{o3}$ or $\overline{o1},\overline{o2}$)come from a fork.\\\\
We focus on the case when there is one edge going towards node $o$ and two edges going away from node $o$. Note that the remaining case is when
there is one edge going away from node $o$ and two edges going towards node $o$. The latter case can be analyzed by reversing direction of all edges and using the following argument.\\\\
Assume node \textbf{1} is incident to the inward edge. Denote edges $\overline{o1},\overline{o2},\overline{o3}$ by $a,b,c$
respectively. By Figure \ref{decomposable cases}, $n\leq3$\\\\
Suppose $n=0$. By previous discussion, if the graph is decomposable, we can only have neighborhoods
as in Figure \ref{noextra}. After reversing all directions, we can get another four possible cases.\\\\
\begin{figure}[bth]
\centering
\begin{minipage}[c]{0.24\linewidth}
\centering
\includegraphics[width=0.7\linewidth]{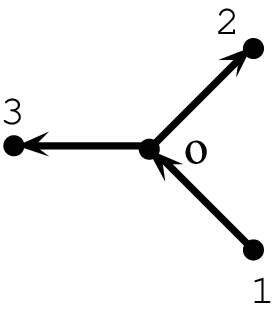}
\end{minipage}
\begin{minipage}[c]{0.24\linewidth}
\centering
\includegraphics[width=0.7\linewidth]{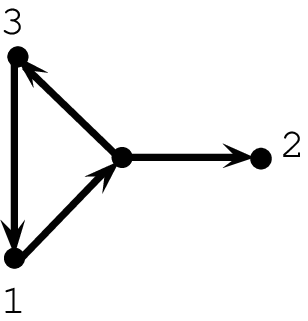}
\end{minipage}
\begin{minipage}[c]{0.24\linewidth}
\centering
\includegraphics[width=0.7\linewidth]{case12b2_1.eps}
\end{minipage}
\begin{minipage}[c]{0.24\linewidth}
\centering
\includegraphics[width=0.7\linewidth]{separateddiamond.eps}
\end{minipage}
\caption{}\label{noextra}
\end{figure}

\noindent Next, suppose $n=1$. Denote this node by $x$\\\\
Assume $x$ is connected to all nodes \textbf{1,2,3}. There are two cases:\\
 \begin{itemize}
 \item If deg(\textbf{1})=3, edges $b,c$ must come from the same diamond and edge $a$ comes from a triangle. See Figure \ref{x123} for directions of edges. Note that it's exactly picture 4 in Figure \ref{decomposable cases}.
 \item If deg(\textbf{1})=2, the neighborhood is a disjoint connected component, there are two possible decomposition: \textbf{1:} $a$ comes from a triangular block and $b,c$ come from a diamond; \textbf{2:} $a,b$ come from one triangular block, edges $\overline{2p}$ and $\overline{1p}$ come from another triangular block. edges $c$ and $\overline{3p}$ come from two spikes.
 \end{itemize}
\begin{figure}[tb]
\centering
\begin{minipage}[c]{0.5\linewidth}
\centering
\includegraphics[width=0.45\linewidth]{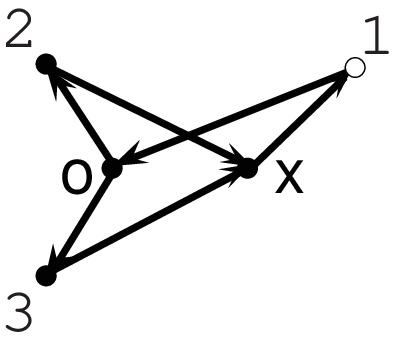}
\caption{}\label{x123}
\end{minipage}
\end{figure}
Suppose $x$ is connected to exactly two of nodes \textbf{1,2,3}. There are three cases:
\begin{enumerate}
\item Suppose $x$ is connected to nodes \textbf{1,2}. The neighborhood can only be as picture 4 or 5 in Figure \ref{decomposable cases}. Edges $a,b$ comes from the same triangle. Similarly, if $x$ is connected to nodes \textbf{1,3}, edges $a,c$ are in the same triangle.
\item Suppose $x$ is connected to nodes \textbf{2,3}. None of the pictures in Figure \ref{decomposable cases} contains such neighborhood. Hence the graph is undecomposable.
\end{enumerate}
\noindent If $x$ is connected to exactly one of the nodes \textbf{1,2,3}. First, suppose $x$ is connected to node \textbf{1} which is an endpoint of inward edge. Note that nodes \textbf{2} and \textbf{3} can not be connected to node \textbf{1}. Use argument in the previous section,  if the graph is decomposable, we conclude:
\begin{itemize}
\item If nodes \textbf{1,2} are connected, edges $a,b$ come from the same triangle and edge $c$ comes from a spike.
\item If nodes \textbf{1,3} are connected, edges $a,c$ come from the same triangle and edge $b$ comes from a spike.
\item If nodes \textbf{2,3} are both disconnected from node \textbf{1}, edges $b, c$ come from the same fork and edge $a$ comes from a spike.
\end{itemize}

\noindent Next suppose $x$ is connected to node \textbf{2}. If the graph is decomposable and node \textbf{1,2} are connected, then $a,b$ come from the same triangle. If nodes \textbf{1,2} are disconnected, then $a,c$ come from the same triangle. The criterion is similar if $x$ is connected only to node \textbf{3}.\\\\
Next, suppose $n=2$, denote these two corresponding nodes by $x,y$.\\\\
First, check if they are both connected to node \textbf{1}. If it's this case, neither node \textbf{2} or \textbf{3} can be connected to node \textbf{1}. Moreover, according to the argument in previous section, we have two cases. \textsf{1:} $a$ comes from a spike and $b,c$ comes from a fork; \textsf{2:} $a$ comes from a triangle. In the second case, $x,y$ must both be connected to nodes \textbf{2} or \textbf{3} by edges with compatible directions, and edges $a,b$ (edges $a,c$) are in the same triangle. See Figure \ref{decomposable cases} picture 5.\\\\
If $x,y$ are not both connected to node \textbf{1}, check if they are both connected to node \textbf{2}. If so, edges $a,c$ can only be obtained from a triangle and $b$ comes from a spike. Since $n=2$, there is no node other than $o$ that is connected to node \textbf{1} or \textbf{3}. Therefore, the neighborhood is as the one in Figure \ref{simple}. Note the neighborhood of $o$ is listed in picture 14 Figure \ref{decomposable cases}. The argument is similar if $x,y$ are both connected to node \textbf{3}.\\\\
\begin{figure}[btch]
\centering
\begin{minipage}[c]{0.5\linewidth}
\centering
\includegraphics[width=0.6\linewidth]{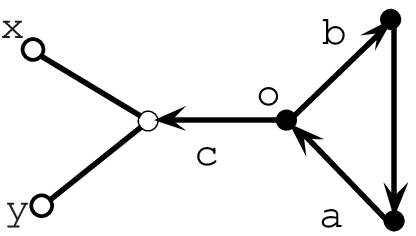}
\caption{}\label{simple}
\end{minipage}
\end{figure}
Next suppose $x,y$ are not connected to the same node. If the graph is decomposable, there are the following cases:
\begin{itemize}
\item $x$ is only connected to node \textbf{1} and $y$ only connected to node \textbf{2}. In this case, nodes \textbf{1,2} must be connected and $a,b$ come from the same triangle. It's similar if $x$ is only connected to node \textbf{1} and $y$ only connected to node \textbf{3}.
\item $x$ is only connected to nodes \textbf{1,2} and $y$ only connected to node \textbf{3}. In this case, $a,b$ come from the same triangle. If nodes \textbf{1,2} are connected, we have neighborhood shown in Figure \ref{decomposable cases} picture 13. If nodes \textbf{1,2} are disconnected, the neighborhood is shown in picture 8 (node $p=x$).
\item $x,y$ are connected to nodes \textbf{2,3} respectively. In addition, if nodes \textbf{1,2} are connected, then edges $a,b$ come from the same triangle. The neighborhood is as shown in picture 11. Similarly, if nodes \textbf{1,3} are connected, edges $a,c$ come from the same triangle. Notice that in this case, nodes \textbf{2} and \textbf{3} can not be connected to node \textbf{1} at the same time , neither can they both be disconnected at the same time.
\end{itemize}
Last of all, suppose $n=3$. Denote three corresponding nodes by $x,y,z$. According to the argument in previous section, in this case, the graph $G$ is decomposable only if node \textbf{1} is connected to node \textbf{2} or \textbf{3}, forming a triangle with the corresponding edges. See Figure \ref{n3}.\\\\

\begin{thm} Assume that every node in $G$ has degree less than or equal to
3. If all nodes of degree 3 fall into the cases listed in Figure
\ref{decomposable cases} (up to a reversion of edge directions), then $G$ is
decomposable. Otherwise, $G$ is undecomposable.
\end{thm}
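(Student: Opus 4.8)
The statement is a biconditional, so the plan is to prove necessity (``only if'') and sufficiency (``if'') separately; in both directions the degree hypothesis reduces everything to the local analysis already carried out in this section. For necessity I would argue by contraposition: suppose some node $o$ of degree $3$ has a neighborhood that does not appear, even up to reversing all edge directions, among the fourteen configurations of Figure \ref{decomposable cases}. The case analysis of this section classifies the neighborhood of a degree-$3$ node by (i) the direction pattern of its three incident edges --- all parallel, or two-out-one-in, the two-in-one-out pattern being obtained by a global reversal --- and (ii) the way its boundary nodes, together with the number $n$ of further nodes attached to them, are connected. For every decomposable graph each such local pattern was shown to force exactly one of the listed neighborhoods; hence a neighborhood outside the list cannot occur in a decomposable graph, and $G$ must be undecomposable. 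This is precisely the content accumulated through the preceding subsections, culminating in the ``Identify the Decomposition'' discussion, which shows that the directions of the edges at $o$ together with the connectivity of its boundary nodes and the count $n$ already determine which listed case, if any, applies.

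For sufficiency I would use induction on a well-founded complexity measure of $G$, for instance the pair (number of nodes, number of degree-$3$ nodes) ordered lexicographically. In the base case $G$ has no node of degree $3$, so every node has degree at most $2$; each connected component is then a path or a cycle, and I would record a short lemma that such graphs are decomposable, by chaining spikes for paths and closing up with triangles for cycles after a short orientation check. This settles the vacuous case of the theorem. In the inductive step I would pick a node $o$ of degree $3$; by hypothesis its neighborhood is one of the listed cases, and these come in two kinds. If the case is one of the disjoint connected components exhibited in the figures, then by the opening Proposition that component decomposes on its own and is independent of the remainder of $G$, so induction applies to the rest. Otherwise the case carries the reversible replacement indicated in the corresponding figure; applying it produces a graph $G'$ of strictly smaller measure, and Lemma \ref{trianglespike}, Corollary \ref{sandwichlemma} and the accompanying remarks guarantee that $G$ is decomposable if and only if $G'$ is. Since the replacement is local and, by the final remark of this section, never raises the degree of $o$, the nodes away from the modified neighborhood keep their listed neighborhoods; one checks that the few nodes inside the new neighborhood also fall into listed cases, so that $G'$ satisfies the hypothesis and the inductive hypothesis yields decomposability of $G'$, hence of $G$.

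The step I expect to be the main obstacle is verifying that the inductive hypothesis is genuinely preserved by a replacement, that is, that no replacement manufactures a degree-$3$ node whose neighborhood is absent from Figure \ref{decomposable cases}, and that the procedure terminates and is insensitive to the order in which degree-$3$ nodes are processed. The order-independence is exactly the issue addressed by the closing bullet list of this section: the observations that picture $9$ sits inside picture $10$ with the same replacement, that the neighborhood of node $\mathbf{1}$ in picture $13$ equals that of $o$ in picture $12$ where no replacement is applied, that node $x$ in picture $5$ reappears as $o$ in picture $4$, and so on. These identifications show that overlapping neighborhoods receive compatible treatment, giving a confluent reduction. Once confluence and strict decrease of the measure are in hand, the induction closes, and the two directions together give the stated characterization.
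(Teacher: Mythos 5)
Your proposal is correct and takes essentially the same route as the paper: necessity follows from the exhaustive classification of degree-$3$ neighborhoods in the preceding subsections, and sufficiency is proved by induction, applying the reversible replacements (Lemma \ref{trianglespike}, Corollary \ref{sandwichlemma} and the accompanying remarks) and splitting off the disjoint components until every node has degree at most $2$, where the graph is a union of glued spikes. Your extra care about order-independence and preservation of the hypothesis under replacements is precisely what the paper's closing bullet list before the theorem records, so the two arguments agree in substance.
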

\begin{proof} Assume that all degree 3 nodes are from Figure
\ref{decomposable cases} and all the necessary replacements have
been applied. Except for picture 2 and 6, which don't require replacement,
the replacements for all neighborhoods in Figure
\ref{decomposable cases} contain triangular blocks. Induction will
be used based on that.\\
Apply the corresponding replacement for all graph in
Figure \ref{decomposable cases}, and get $G'$. Notice that according
to the previous lemma, if $G'$ is decomposable, so is $G$. So
besides separated connected components: Graph 2,6, all node of
degree 3 in $G'$ are in the form of Figure \ref{case12}. Remove the
triangle as a block and use induction. After finitely many steps,
all nodes have degree at most 2. This can be obtained by gluing finite many
spikes.
\end{proof}
\vspace{1em} \noindent To conclude, if the graph $G$ is decomposable, we have exhausted
all possible neighborhoods of any node of degree at least 3. Any undecomposable neighborhood forces the whole graph to be
undecomposable. If none of these undecomposable neighborhoods is
contained in the graph, we apply necessary replacement to those of
degree 8,7,6,5 and 4 (in this exact order). These replacements
reduce the degrees of nodes and simplify the graph. In every step, it
is necessary to examine if any undecomposable neighborhood is
contained in the new graph. It is possible that after a step of
simplification, we obtain several connected components and
the same algorithm can be applied to each component. Eventually the
graph is reduced to the one with nodes of degree at most 3. The
possible neighborhoods of nodes of degree 3 are listed in Section
\ref{deg3node}. By the last theorem, we can determine if such graph is decomposable. And lemmas are
provided to show that all replacements
are reversible. Thus, in this case, the original graph is decomposable.\\
\begin{rmk} In most cases, the decomposition is unique. However, as mentioned in
the above argument, some neighborhood has non-unique decomposition.
As shown in Figure \ref{nonunique}, these neighborhoods are all
disjoint connected components. Each of them has finite many possible decompositions. We can reverse the direction of each picture to obtain another 14 neighborhoods with non-unique decompositions.
\begin{figure}[bt]
\centering
 \begin{minipage}[b]{0.3\linewidth}
 \centering
 \includegraphics[width=0.4\linewidth]{1.eps}
 \end{minipage}
 \begin{minipage}[b]{0.3\linewidth}
 \centering
 \includegraphics[width=0.4\linewidth]{2.eps}
 \end{minipage}
 \begin{minipage}[b]{0.3\linewidth}
 \centering
 \includegraphics[width=0.5\linewidth]{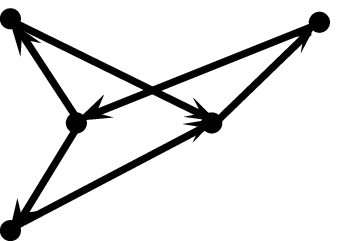}
 \end{minipage}\\
  \begin{minipage}[b]{0.3\linewidth}
 \centering
 \includegraphics[width=0.5\linewidth]{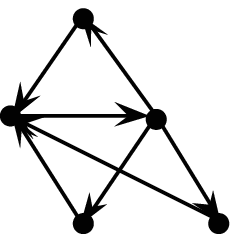}
 \end{minipage}
 \begin{minipage}[b]{0.3\linewidth}
 \centering
 \includegraphics[width=0.45\linewidth]{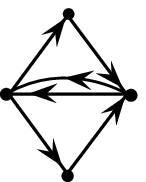}
 \end{minipage} \begin{minipage}[b]{0.3\linewidth}
 \centering
 \includegraphics[width=0.6\linewidth]{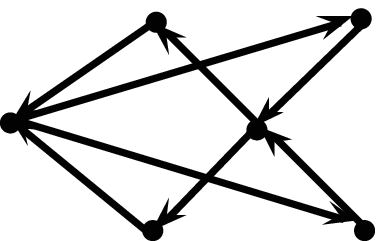}\\
 \vspace{1.8em}
 \end{minipage}\\
 \begin{minipage}[b]{0.3\linewidth}
 \centering
 \includegraphics[width=0.4\linewidth]{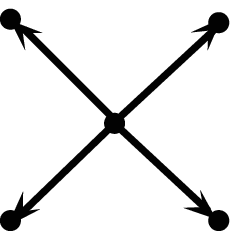}
 \end{minipage} \begin{minipage}[b]{0.3\linewidth}
 \centering
 \includegraphics[width=0.4\linewidth]{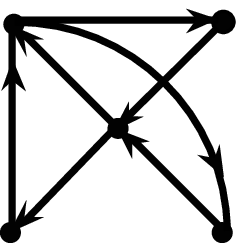}
 \end{minipage} \begin{minipage}[b]{0.3\linewidth}
 \centering
 \includegraphics[width=0.4\linewidth]{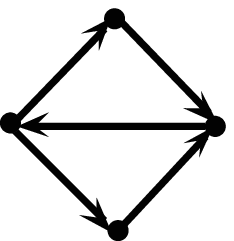}
 \end{minipage}\\
 \begin{minipage}[b]{0.3\linewidth}
 \centering
 \includegraphics[width=0.4\linewidth]{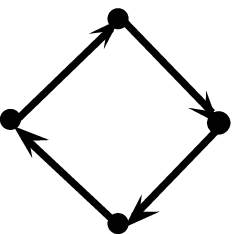}
 \end{minipage}
 \begin{minipage}[b]{0.3\linewidth}
 \centering
 \includegraphics[width=0.35\linewidth]{11.eps}
 \end{minipage}
 \begin{minipage}[b]{0.3\linewidth}
 \centering
 \includegraphics[width=0.27\linewidth]{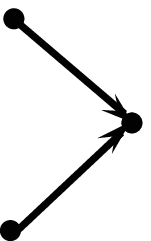}
 \end{minipage}\\
 \begin{minipage}[b]{0.3\linewidth}
 \centering
 \includegraphics[width=0.5\linewidth]{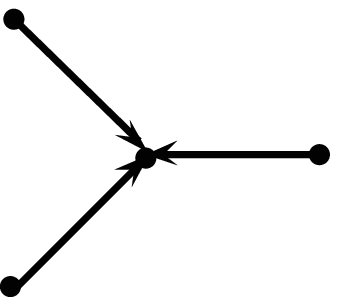}
 \end{minipage}
 \begin{minipage}[b]{0.3\linewidth}
 \centering
 \includegraphics[width=0.5\linewidth]{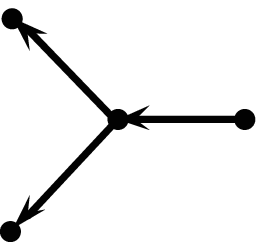}
 \end{minipage}\\
\caption{Neighborhoods that Have Non-unique Decomposition}
\label{nonunique}
\end{figure}
\end{rmk}
\begin{rmk} At each stage of simplification, we apply replacements for at most as many as the number of nodes in the graph.
Moreover, if the neighborhood of a node needs replacement in the algorithm, after applying the replacement, the degree of the considered node
becomes 3. According to the algorithm, this means we apply replacement for at most once to the same nodes, that is, to reduce the degree to 3 if it's not 3 in the original graph. The number of replacement less than the number of nodes in the graph.
This is noticed by P.Tumarkin. In additoin, this algorithm provides a fast way to determine when a quiver of size
larger than 10 has finite mutation type. (See \cite{FST2} for
detail.)
\end{rmk}
\section*{Acknowledgement}
\indent I thank P.Tumarkin for the discovery of the linearity of
this algorithm, S.Fomin, M.Shapiro and D.Thurston for bringing up
this problem in \cite{FST1}. I especially thank my advisor
Dr.Shapiro for helpful advises and inspiring discussions, and for
kindly providing proofreading of this paper.
\addcontentsline{toc}{section}{Bibliography}


\begin{thebibliography}{9}
\bibitem[1]{NI} N.V.Ivanov,
\textit{Mapping class groups},
Handbook of geometric topology, 523--633, NorthHolland, Amesterdam,2002.
\bibitem[2]{FST1}S.Fomin, M.Shapiro and D.Thurston,
\textit{Cluster Algebras and Triangulated Surfaces},
arXiv:math/0608367v3
\bibitem[3]{FST2}A.Felikson, M.Shapiro and P.Tumarkin,
\textit{Skew-symmetric Cluster Algebras of Finite Mutation Type},
arXiv:0811.1703
\end{thebibliography}
\end{document}